\documentclass{article}
\usepackage{amsfonts}
\usepackage{amsmath,amsthm,amssymb}
\usepackage[all,dvips]{xy}
\usepackage{eucal}
\usepackage{xcolor}
\usepackage{comment}
\usepackage{tikz-cd}
\usepackage[margin=2cm]{geometry}
\newcommand{\rk}{\operatorname{rk}}

\renewcommand{\deg}{\operatorname{deg}}

\renewcommand{\min}{\operatorname{min}}
\renewcommand{\max}{\operatorname{max}}

\newcommand{\Z}{\mathbb{Z}}
\newcommand{\Q}{\mathbb{Q}}
\newcommand{\C}{\mathbb{C}}
\setlength\oddsidemargin{0in}
\setlength\evensidemargin{0in}
\setlength{\footskip}{.3in}
\setlength{\textwidth}{16.5cm}
\setlength{\parskip}{5pt}
\newtheorem{theorem}{Theorem}[section]
\newtheorem{lemma}[theorem]{Lemma}
\newtheorem{corollary}[theorem]{Corollary}

\newtheorem{proposition}[theorem]{Proposition}
\newtheorem*{theorem*}{Theorem}
\theoremstyle{definition}
\newtheorem{defn}[theorem]{Definition}

\newtheorem{remark}[theorem]{Remark}

\newtheorem{example}[theorem]{Example}

\numberwithin{equation}{section}

\author{Houari Benammar Ammar}
\AtEndDocument{\bigskip{\footnotesize%
  \textsc{D\'{e}partement de math\'{e}matiques, Universit\'{e} du Qu\'{e}bec \`a Montr\'{e}al, Montr\'{e}al, QC, H2X 3Y7, Canada} \par  
  \textit{E-mail address :} \texttt{benammar\_ammar.houari@courrier.uqam.ca} 
}}
\large
\title{Slope inequality for an arbitrary divisor}
\begin{document}
\normalsize
\maketitle
\begin{abstract}
Let $f: S \longrightarrow C$ be a surjective morphism with connected fibers from a smooth complex projective surface $S$ to a smooth complex projective curve $C$ with general fiber $F$. In this paper, we develop a more general version of the slope inequality for data $(D, \mathcal{F})$, where $D$ is an arbitrary relatively effective divisor on $S$ and $\mathcal{F}$ is a locally free sub-sheaf of $f_{*}\mathcal{O}_{S}(D)$. We see how the speciality of $D$, restricted to the general fiber, plays a role in the results. Moreover, we compute some natural examples and provide  applications.
\end{abstract}
\section{Introduction}
Let $f: S \to C$ be a surjective morphism from a smooth complex projective
surface $S$ to a smooth complex projective curve $C$ with connected fibers. We call the morphism $f$
a fibration or a fibred surface. Let $D$ be a relatively effective divisor on $S$. We consider the sheaf $\mathcal{E} =
f_{*}\mathcal{O}_{S}(D)$, which is torsion free because $C$ is a curve.
Since a torsion free sheaf on curve is always a locally free sheaf, $\mathcal{E}$ is
    locally free and its rank is $h^{0}(F, {D}_{|_{F}})$, where $F$ is a general fiber of $f$ of
genus $g (F) = g$.
\par The fibration $f$ is called smooth if all its fibers are smooth, isotrivial if all
its smooth fibers are mutually isomorphic, and locally trivial if it is both
smooth and
isotrivial.
Let $\omega_{S}$ (respectively $K_{S}$) be the canonical sheaf (respectively a canonical
divisor) of $S$, $\omega_{S/C} = \omega_{S} \otimes f^{*} \omega_{C}^{\vee}$ (respectively
$K_{S/C} = K_{S} - f^{*}K_{C})$ be the relative canonical sheaf (respectively a relative
canonical divisor), where $\omega_{C}$ (respectively $K_{C}$) is the canonical sheaf of
$C$ (respectively a canonical divisor).
In particular, if $D = K_{S/C}$, then $\mathcal{E}$ is a nef vector bundle, \cite{Fujita1}, its rank is $g$, and its degree is
\[\deg(\mathcal{E}):= \deg (f_{*}\omega_{S/C})= \chi(\mathcal{O}_{S}) - \chi(\mathcal{O}_{F}).\chi(\mathcal{O}_{C})\]
\[= \chi(\mathcal{O}_{S}) - (g - 1)(b - 1) \text{,}\]
for $b := g(C) 
$. By the Leray spectral sequence, we note that
$$h^{0}(C, (f_{*}\omega_{S/C})^{\vee}) = h^{0}(C,\mathcal{R}^{1}f_{*}\mathcal{O}_{S}) = q(S) - b,$$
where $q(S) := h^{1}(S, \mathcal{O}_{S})$ is the irregularity of the surface $S$.
\par In \cite{xiaogang}, Xiao wrote a fundamental paper on fibred surfaces over curves.
He discussed the geometry of fibrations where $f$ is relatively minimal and
$g(F) \geq 2$. He proved that if $f$ is relatively minimal and not locally trivial, that is $\deg f_{*}\omega_{S/C} \neq 0$, then
\[
K_{S/C}^{2} \geq 4 \frac{g - 1}{g}\deg f_{*}\omega_{S/C} \text{.}
\]
This last result was a key ingredient in the work of Pardini \cite{Pardini} to prove Severi's conjecture \cite{Severri}, \cite{Reid}.
\par We call the above inequality a slope inequality for the relative canonical divisor. Recall that $K_{S/C}$ is a nef divisor \cite[Theorem 1.4]{ohno}.
\par Independently, Cornalba and Harris \cite{Harris} proved the above inequality for
semi-stable fibrations (that is, fibrations where all the fibers are semi-stable curves in the sense of Deligne and Mumford). Later Stoppino, \cite{stoppinoGIT}, showed that a generalization of the Cornalba-Harris approach gives a full proof of the slope inequality in which all fibrations are treated by the same method. Also, we recall that Yuan and Zhang, \cite{zhang}, gave a new approach to prove the slope inequality by giving a sense to the relative Noether inequality using Frobenius iteration techniques. Moreover, there has been interest in
giving a bound related to other geometric invariants such as the relative irregularity and the unitary rank of the fibred surface $f: S \to C$. These points have been discussed in several papers, for instance in  \cite{Barjastoppinostability}, \cite{Luandzuo} and \cite{stoppino}. Konno, in \cite{Konno}, described  $K_{S/C}^{2}$ as a sum of two parts
under some  conditions on the fibration $f$. More precisely, the first part is related to $\deg f_{*}\omega _{S/C}$ and the second one is described by the
Horikawa index
\cite{Horikawa}. Since we apply Fujita's decompositions in this paper (see Example \ref{exampleusingfujitadecomposition})
we recall them.
\begin{theorem}[First Fujita decomposition for fibred  surfaces \cite{Fujita1}]\label{First:Fujita:decomp:thm} 
\par Let $f : S \to C$ be a fibration from a smooth complex projective surface $S$ to a
smooth projective curve $C$. Then
\[
f_{*}\omega_{S/C} = \mathcal{O}_{C}^{q(S) - b} \oplus \mathcal{N} \text{,}
\]
where $\mathcal{N}$ is a nef vector sub-bundle and $h^{0}(C, \mathcal{N}^{\vee}) =
0$.
\end{theorem}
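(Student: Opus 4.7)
The plan is to reduce the statement to a general decomposition theorem for nef vector bundles on curves, applied to the concrete bundle $\mathcal{E} := f_{*}\omega_{S/C}$. Two inputs about $\mathcal{E}$ are already available: first, by Fujita's nefness theorem, $\mathcal{E}$ is a nef vector bundle on $C$; second, the identity $h^{0}(C, \mathcal{E}^{\vee}) = q(S) - b$ recalled in the excerpt (obtained from relative Serre duality, $\mathcal{E}^{\vee} \cong R^{1}f_{*}\mathcal{O}_{S} \otimes \omega_{C}^{\vee}$ twisted appropriately, combined with the Leray spectral sequence). Thus the theorem is equivalent to the purely bundle-theoretic claim that any nef vector bundle $\mathcal{V}$ on a smooth projective curve $C$ admits a decomposition $\mathcal{V} = \mathcal{O}_{C}^{k} \oplus \mathcal{N}$, with $k = h^{0}(C, \mathcal{V}^{\vee})$ and $h^{0}(C, \mathcal{N}^{\vee}) = 0$.

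\textbf{Constructing the trivial summand.} First I would analyse sections of $\mathcal{V}^{\vee}$. A nonzero $s \in H^{0}(C, \mathcal{V}^{\vee}) = \mathrm{Hom}(\mathcal{V}, \mathcal{O}_{C})$ gives a sheaf morphism $\mathcal{V} \to \mathcal{O}_{C}$ whose image is an ideal sheaf $\mathcal{I}_Z \subset \mathcal{O}_C$; since $\mathcal{V}$ is nef every quotient has nonnegative degree, forcing $Z = \emptyset$ and hence surjectivity. Dually, each section yields a saturated sub-line-bundle $\mathcal{O}_{C} \hookrightarrow \mathcal{V}^{\vee}$. Then I would examine the Harder--Narasimhan filtration of $\mathcal{V}^{\vee}$: since $\mu_{\max}(\mathcal{V}^{\vee}) = -\mu_{\min}(\mathcal{V}) \leq 0$, while the existence of sections forces $\mu_{\max}(\mathcal{V}^{\vee}) \geq 0$, the maximal destabilizing sub $\mathcal{V}_{1} \subset \mathcal{V}^{\vee}$ is semistable of slope $0$. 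All HN-quotients of $\mathcal{V}^{\vee}/\mathcal{V}_{1}$ then have strictly negative slope, so $H^{0}(\mathcal{V}^{\vee}/\mathcal{V}_{1}) = 0$ and $H^{0}(\mathcal{V}_{1}) = H^{0}(\mathcal{V}^{\vee}) = k$. A polystable degree-zero bundle with $h^{0} = k$ contains $\mathcal{O}_{C}^{k}$ as a direct summand, giving an embedding $\mathcal{O}_{C}^{k} \hookrightarrow \mathcal{V}^{\vee}$, equivalently a surjection
\[
0 \longrightarrow \mathcal{N} \longrightarrow \mathcal{V} \longrightarrow \mathcal{O}_{C}^{k} \longrightarrow 0.
\]

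\textbf{Splitting and the main obstacle.} The heart of the argument is showing that this extension splits and that the kernel $\mathcal{N}$ (automatically nef as a subbundle with nef quotient, after checking) satisfies $h^{0}(\mathcal{N}^{\vee}) = 0$. The vanishing of $h^{0}(\mathcal{N}^{\vee})$ follows by maximality: if $\mathcal{N}^{\vee}$ had a section, its pullback to $\mathcal{V}^{\vee}$ would produce a section outside the image of $H^{0}(\mathcal{O}_{C}^{k}) \to H^{0}(\mathcal{V}^{\vee})$, contradicting $h^{0}(\mathcal{V}^{\vee}) = k$. The splitting of the sequence, whose obstruction lies in $\mathrm{Ext}^{1}(\mathcal{O}_{C}^{k}, \mathcal{N}) \cong H^{1}(C, \mathcal{N})^{k}$, is the main technical obstacle: this is where the geometric origin of $\mathcal{E} = f_{*}\omega_{S/C}$ (rather than an abstract nef bundle) would normally be used, invoking the polarization on the variation of Hodge structure $R^{1}f_{*}\mathbb{Q}_{S}$ together with Deligne's theorem of the fixed part, which forces the flat (unitary) subbundle corresponding to global sections of $R^{1}f_{*}\mathcal{O}_{S}$ to split off as an orthogonal complement with respect to the Hodge metric.

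\textbf{Conclusion.} Combining the decomposition with the computation $k = h^{0}(\mathcal{E}^{\vee}) = q(S) - b$ yields the stated form. If I had to bet on where a reader would demand more detail, it is step (3): proving that the exact sequence splits. One can either appeal directly to Fujita's original Hodge-theoretic argument, or, alternatively, note that the trivial summand must be canonically identified via the semisimple decomposition of degree-zero semistable bundles together with the unitarity of the flat part given by the polarization.
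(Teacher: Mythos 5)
Your outline correctly isolates the two available inputs (nefness of $f_{*}\omega_{S/C}$ and $h^{0}(C,(f_{*}\omega_{S/C})^{\vee})=q(S)-b$) and correctly aims at an exact sequence $0\to\mathcal{N}\to\mathcal{E}\to\mathcal{O}_{C}^{k}\to 0$, but there is a genuine gap at the decisive step, and your framing of that step is wrong. The theorem is \emph{not} equivalent to the ``purely bundle-theoretic claim'' that a nef bundle $\mathcal{V}$ on a curve splits as $\mathcal{O}_{C}^{h^{0}(\mathcal{V}^{\vee})}\oplus\mathcal{N}$: on an elliptic curve the nonsplit self-extension $0\to\mathcal{O}_{C}\to\mathcal{V}\to\mathcal{O}_{C}\to 0$ is nef and self-dual with $h^{0}(\mathcal{V}^{\vee})=1$, yet has no trivial direct summand. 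So nefness alone can never produce the splitting; the splitting of your extension \emph{is} the content of Fujita's theorem, and you dispose of it in one sentence by invoking ``the polarization on $R^{1}f_{*}\mathbb{Q}$ together with Deligne's theorem of the fixed part.'' As written this is a citation of a strategy, not a proof: you do not show why a global section of $\mathcal{R}^{1}f_{*}\mathcal{O}_{S}$ underlies a flat (constant) sub-variation of Hodge structure (this is exactly where the fixed-part theorem and the degenerate fibres, via canonical extensions and local monodromy, must be handled), nor why the orthogonal complement of the resulting flat subbundle with respect to the Hodge metric is again a holomorphic subbundle yielding an algebraic direct-sum decomposition of $f_{*}\omega_{S/C}$. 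Note that the paper itself does not reprove Theorem \ref{First:Fujita:decomp:thm} — it quotes \cite{Fujita1} — and in the one place where it does prove a first-Fujita-type decomposition (Theorem \ref{firstfujita}, for $f_{*}\omega_{S/C}^{\otimes m}$ and adjoint bundles), the splitting of the analogous extension is precisely the step requiring outside machinery, namely the minimal extension property for singular hermitian metrics \cite[Theorem 26.4]{HSP}; this confirms that the step cannot be absorbed into abstract nef-bundle arguments.

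A secondary, repairable error: your construction of the surjection $\mathcal{V}\to\mathcal{O}_{C}^{k}$ passes through the claim that the maximal destabilizing subbundle $\mathcal{V}_{1}\subset\mathcal{V}^{\vee}$, being of slope $0$, is polystable and hence contains $\mathcal{O}_{C}^{k}$ as a direct summand. But $\mathcal{V}_{1}$ is only semistable, and a semistable slope-$0$ bundle with $h^{0}=k$ need not contain even one trivial summand (the same Atiyah-type example). The step can be salvaged without any Harder--Narasimhan input: you already showed via nefness that every nonzero section of $\mathcal{V}^{\vee}$ is nowhere vanishing, and every nonzero linear combination of a basis of $H^{0}(\mathcal{V}^{\vee})$ is again such a section, so the evaluation map $\mathcal{O}_{C}^{k}\to\mathcal{V}^{\vee}$ is injective on fibres; dualizing gives the surjection and the exact sequence, and $h^{0}(\mathcal{N}^{\vee})=0$ follows as you say. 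Fixing this, however, does not touch the missing splitting argument, which is where the proposal stops short of the theorem.
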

We remark that in the conclusion of Theorem \ref{First:Fujita:decomp:thm}, the trivial part comes from the nonzero global sections of the dual of $f_{*}\omega_{S/C}$. In other words, it comes from $H^{0}(C, \mathcal{R}^{1}f_{*}\mathcal{O}_{S})$.
\begin{theorem}[Second Fujita decomposition for fibred surface, \cite{Fujita2}, \cite{Cat1}, \cite{Cat2}, \cite{Cat3}]\label{Second:Fujita:decomp:thm}
Let $f : S \to C$ be a fibration as above. Then
\[
f_{*}\omega_{S/C} = \mathcal{A} \oplus \mathcal{U}\text{,} 
\]
where $\mathcal{A}$ is an ample vector sub-bundle and $\mathcal{U}$ is a unitary flat vector sub-bundle.
\end{theorem}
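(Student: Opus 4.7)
The plan is to exploit the Hodge-theoretic nature of $\mathcal{E} := f_{*}\omega_{S/C}$, which, over the open locus $U \subset C$ of regular values of $f$, is the Hodge sub-bundle $F^{1} \subset \mathcal{H}$ of the polarized variation of Hodge structure $\mathcal{H} = (R^{1}f_{*}\mathbb{C}_{|U}) \otimes \mathcal{O}_{U}$ of weight one. The idea is to extract the unitary summand from the maximal flat part of this VHS and to show that the complement is ample via Griffiths-type curvature positivity, leveraging the nefness already guaranteed by Theorem \ref{First:Fujita:decomp:thm}.

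First I would apply Deligne's semisimplicity theorem to split the $\mathbb{C}$-local system $\mathbb{V} := R^{1}f_{*}\mathbb{C}_{|U}$ into simple summands and isolate the maximal sub-local system $\mathbb{V}_{u} \subset \mathbb{V}$ whose monodromy is unitary and whose associated Hodge filtration is compatible with the decomposition; the corresponding piece of $F^{1}$ defines a flat unitary sub-bundle $\mathcal{U}_{U} \subset \mathcal{E}_{|U}$. To globalize this to a sub-bundle $\mathcal{U} \subset \mathcal{E}$ on all of $C$, I would use that the local monodromies of a polarized VHS at the punctures of $U \subset C$ are quasi-unipotent, and that, restricted to $\mathbb{V}_{u}$, quasi-unipotence combined with unitarity forces trivial local monodromy; hence $\mathbb{V}_{u}$ extends to a unitary local system on $C$, which by Narasimhan--Seshadri corresponds to a unitary flat vector sub-bundle $\mathcal{U}$ of $\mathcal{E}$.

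Next I would produce the complementary summand $\mathcal{A}$. The Hodge metric provides on $U$ a smooth orthogonal decomposition $\mathcal{E}_{|U} = \mathcal{U}_{U} \oplus \mathcal{A}_{U}$, and Schmid's nilpotent and $SL_{2}$-orbit theorems, applied at each puncture, show that this orthogonal complement extends holomorphically to a sub-bundle $\mathcal{A} \subset \mathcal{E}$, yielding the desired algebraic splitting $\mathcal{E} = \mathcal{A} \oplus \mathcal{U}$. Ampleness of $\mathcal{A}$ then follows from Griffiths semi-positivity of the Hodge bundle together with the maximality of $\mathcal{U}$: any degree-zero quotient of $\mathcal{A}$ would, via Narasimhan--Seshadri, produce a further unitary flat summand contradicting maximality, so $\mathcal{A}$ has strictly positive minimal slope and is therefore ample.

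The main obstacle is precisely this passage from the open locus $U$ to the compact curve $C$: both the extension of $\mathbb{V}_{u}$ to a flat bundle on all of $C$ and the holomorphic extension of the orthogonal complement across the critical values rely on the delicate asymptotic analysis of Hodge structures near the singular fibers. The remaining ingredients (Deligne semisimplicity, Griffiths curvature estimates for Hodge bundles, and their bundle-theoretic consequences) are comparatively formal once these extension results are in hand.
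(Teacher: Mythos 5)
The paper does not prove Theorem \ref{Second:Fujita:decomp:thm}; it imports it from \cite{Fujita2}, \cite{Cat1}, \cite{Cat2}, \cite{Cat3}, so your sketch has to be measured against those proofs, whose general Hodge-theoretic strategy you do follow. The first genuine gap is your extension step: the claim that quasi-unipotence combined with unitarity forces \emph{trivial} local monodromy is false — a unitary quasi-unipotent operator is merely of finite order. This is not a cosmetic issue, because the maximal sub-local system of type $(1,0)$ over the regular locus $U$ genuinely need not extend to a flat summand on $C$. Take an isotrivial but not locally trivial fibration, e.g.\ $S=(F\times B)/G$ with quotient (multiple) fibres: over $U$ the whole variation of Hodge structure has finite, hence unitary, monodromy and $F^{1}$ is flat on $U$, yet $\deg f_{*}\omega_{S/C}>0$ is possible, so $f_{*}\omega_{S/C}$ is not flat on $C$. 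Your argument would force $\mathcal{U}=f_{*}\omega_{S/C}$ in that case, which is wrong. The actual content of the theorem is to isolate the part of the flat piece whose monodromy descends to a representation of $\pi_{1}(C)$ and which is compatible with the canonical extension across the critical values (this is what $f_{*}\omega_{S/C}$ is there); identifying that part is precisely the hard point in Fujita's and Catanese--Dettweiler's arguments, and Schmid's asymptotics by themselves do not deliver it.

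The second gap is the ampleness step. A nef degree-zero quotient of $\mathcal{A}$ is only semistable of degree zero, not polystable, so Narasimhan--Seshadri does not hand you a unitary flat bundle; and even granting flatness of such a quotient, to contradict maximality you must show it arises from a sub-local system of the geometric VHS contained in $F^{1}$, or at least that it splits off over all of $C$ — which is again the crux, not a formality. In the classical proofs this is done by Fujita's curvature computation with the Hodge metric (flatness of the candidate summand plus Griffiths semi-positivity forces the second fundamental form to vanish — incidentally, that, and not Schmid's theorems, is also why the orthogonal complement is holomorphic over $U$), together with a careful analysis at the punctures; in the modern treatment one uses the singular Hermitian metric with semi-positive curvature and the minimal extension property, exactly the mechanism from \cite{HSP} and \cite{schnell} that this paper itself invokes when proving Theorem \ref{firstfujita} and quoting Theorem \ref{secondfujita}. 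As written, your proposal outsources both of these decisive points to general principles that do not cover them.
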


In the situation of Theorem \ref{Second:Fujita:decomp:thm}, we denote by $u_{f}$ the rank of $\mathcal{U}$, and call it the \emph{unitary rank} of
the fibred surface $f : S \to C$. A proof of the second Fujita decomposition is given by Catanese and Dettweiler,
\cite{Cat1}, \cite{Cat2}, \cite{Cat3}. In Theorem \ref{firstfujita} and Theorem \ref{secondfujita}, we discuss the first and the second Fujita decompositions for the adjoint cases.

\par In this paper, let $f: S \to C$ be a fibred surface, and let $D$ be a relatively effective divisor on $S$ with a general fiber $F$. Consider the data $(D, \mathcal{F})$ where $\mathcal{F} \subseteq f_{*}\mathcal{O}_{S}(D)$. The goal  is to give a lower bound for $D^{2}$ in terms of $\deg \mathcal{F}$, even if $D$ is not positive, for instance not nef. We explain how the negative part of $D$ appears in the lower bound of $D^{2}$. The method used is due to Xiao \cite{xiaogang}.
\par Now, we describe briefly the process of the main results. We start by defining the Miyaoka divisors $(N_{i})_{1 \leq i \leq k}$ of a data $(D,\mathcal{F})$, where $k$ is the length of the Harder-Narasimhan filtration $(\mathcal{F}_{i})_{1 \leq i \leq k}$ of $\mathcal{F}$ (Definition \ref{miyaoka}). We realize that this sequence of divisors can be divided into two sub-sequences. \par The first is a sequence of special Miyaoka divisors, and the second is a sequence of nonspecial Miyaoka divisors. We thus define an important number, $\Hat{n}_{(D,\mathcal{F})}$, which tells us the index of the last divisor in the first sequence (Proposition \ref{specialandnonspecial}). It is  natural  to study the sequence of rational numbers $$\left(\frac{d_{i}}{h^{0}({N_{i}}_{|_{F}}) - 1}\right)_{1\leq i \leq k}\text{,}$$ where $d_{i} = N_{i}.F$. In Theorem \ref{3.10}, we  give a uniform lower bound for this sequence. 
\par Clearly, by Clifford's Theorem, the number $2$ is a lower bound of the sub-sequence $$\left(\frac{d_{i}}{h^{0}({N_{i}}_{|_{F}}) - 1}\right)_{1 \leq i \leq \Hat{n}_{(D,\mathcal{F})}}\text{,}$$ and we prove that the number 
$$\beta_{D}:= 1 + \frac{g(F)}{h^{0}(F, {D}_{|_{F}}) - 1}$$ is a lower bound of the sub-sequence 
$$\left(\frac{d_{i}}{h^{0}({N_{i}}_{|_{F}}) - 1}\right)_{\Hat{n}_{(D,\mathcal{F})} + 1 \leq i \leq k}
$$ for a fixed $D$, and any $\mathcal{F} \subseteq f_{*}\mathcal{O}_{S}(D).$ Moreover,  we will see that this sub-sequence decreases. Following the Modified Xiao Lemma (Lemma \ref{4.1}), we observe that we need to find a constant $\alpha_{(D,\mathcal{F})}$ for the data $(D, \mathcal{F})$ such that $$d_{i} \geq \alpha_{(D, \mathcal{F})}(h^{0}(F, {N_{i}}_{|_{F}}) - 1) \geq \alpha_{(D, \mathcal{F})} (r_{i} -1)$$ (here $r_{i}:= \rk \mathcal{F}_{i}$) with the aim that $\deg \mathcal{F}$ appears. Thus, naturally  Definition 
\ref{alphadefinition} follows.
The main result of the paper is the following theorem.
\begin{theorem*}[Theorem \ref{main result}]
Let $f: S \to C$ be a fibred surface. Consider the data $(D, \mathcal{F})$, where $D$ is  a relatively effective divisor and $\mathcal{F} \subseteq f_{*}\mathcal{O}_{S}(D)$ is a locally free sub-sheaf on $C$. Assume that $\mathcal{F}$ is not semi-stable. Here, we set:
\[ t_{(D, \mathcal{F})}:= \left\{
    \begin{array}{ll}
\max\{i| \mu_{i} \geq 0\}  & \mbox{if $\mu_{1} \geq 0$}  
\\ -\infty & \mbox{otherwise} \text{.}

\end{array}
\right. 
\]  
\begin{itemize}
  \item[(1).] If $t_{(D,\mathcal{F})} = 1$, then
  \[
  D^{2} \geq \frac{2d_{1}}{r_{1}} \deg \mathcal{F}_{1} + 2 \epsilon^{*}D.Z_{1} - Z_{1}^{2} \geq \frac{2d_{1}}{r_{1}} \deg \mathcal{F} + 2 \epsilon^{*}D.Z_{1} - Z_{1}^{2} \text{.} 
  \]  
    \item[(2).] If $1 < t_{(D,\mathcal{F})} \leq k$, then
  \[
  D^{2} \geq \frac{2\alpha_{(D,\mathcal{F}_{t_{(D,\mathcal{F})}})} d_{t_{(D, \mathcal{F})}}}{d_{t_{(D,\mathcal{F})}} + \alpha_{(D,\mathcal{F}_{t_{(D, \mathcal{F})}})}} \deg \mathcal{F}_{t_{(D,\mathcal{F})}} + 2 \epsilon^{*}D.Z_{t_{(D,\mathcal{F})}} - Z_{t_{(D,\mathcal{F})}}^{2}
  \]
  \[\geq \frac{2\alpha_{(D,\mathcal{F}_{t_{(D,\mathcal{F})}})} d_{t_{(D,\mathcal{F})}}}{d_{t_{(D,\mathcal{F})}} + \alpha_{(D,\mathcal{F}_{t_{(D,\mathcal{F})}})}} \deg \mathcal{F} + 2 \epsilon^{*}D.Z_{t_{(D,\mathcal{F})}} - Z_{t_{(D,\mathcal{F})}}^{2}\text{.} 
  \]  
    \item[(3).] If $t_{(D,\mathcal{F})} = -\infty$, set:
  \[ C_{(D,\mathcal{F})} := \left\{
    \begin{array}{ll}
\frac{2\alpha_{(D,\mathcal{F})} d_{k}}{-\alpha_{(D,\mathcal{F})} + 2\alpha_{(D,\mathcal{F})} r_{k} - d_{k}}  & \mbox{if $\alpha_{(D,\mathcal{F})} - 2\alpha_{(D,\mathcal{F})} r_{k} + 2d_{k} \leq 0$}  
\\ 3\alpha_{(D,\mathcal{F})} + 2d_{k} - 2\alpha_{(D,\mathcal{F})} r_{k} & \mbox{otherwise} \text{.}

\end{array}
\right. 
\]  
Then
    \[
D^{2} \geq C_{(D,\mathcal{F})}. \deg \mathcal{F} + 2 \epsilon^{*}D.Z_{k} - Z_{k}^{2} \text{.} 
    \]
\end{itemize}
In addition, if $d_{k} = \alpha_{(D,\mathcal{F})} (r_{k} - 1)$, then we have the following inequality which is independent of $t_{(D,\mathcal{F})}$:
\[
D^{2} \geq \frac{2d_{k}}{r_{k}} \deg \mathcal{F} + 2 \epsilon^{*}D.Z_{k} - Z_{k}^{2} \text{.} 
\]    
\end{theorem*}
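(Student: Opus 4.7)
The plan is to follow Xiao's filtration method in the modified form hinted at in the introduction, replacing the classical Clifford bound $d_{i} \geq 2(r_{i} - 1)$ by the weaker but more flexible inequality $d_{i} \geq \alpha_{(D,\mathcal{F})}(r_{i} - 1)$ built into Definition \ref{alphadefinition}, and then optimising the resulting Hodge-index sum through the Modified Xiao Lemma (Lemma \ref{4.1}).

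First I would fix the Harder-Narasimhan filtration
\[
0 = \mathcal{F}_{0} \subsetneq \mathcal{F}_{1} \subsetneq \cdots \subsetneq \mathcal{F}_{k} = \mathcal{F},
\]
with $r_{i} = \rk \mathcal{F}_{i}$ and strictly decreasing slopes $\mu_{i}$. After a sequence of blow-ups $\epsilon : \widetilde{S} \to S$ resolving the base loci of the relative evaluation maps $f^{*}\mathcal{F}_{i} \to \mathcal{O}_{S}(D)$, one obtains on $\widetilde{S}$ the Miyaoka divisors $N_{i}$ together with the fixed divisors $Z_{i}$ satisfying $\epsilon^{*}D = N_{i} + Z_{i}$ and $N_{i}.F = d_{i}$. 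The basic bookkeeping is the nesting $N_{1} \leq N_{2} \leq \cdots \leq N_{k} \leq \epsilon^{*}D$ together with the telescoping identity $\deg \mathcal{F} = \sum_{i} (r_{i} - r_{i-1})\mu_{i}$.

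Next I would apply Lemma \ref{4.1} to the data $(\epsilon^{*}D, (N_{i}), (\mu_{i}))$ to produce an inequality of the shape
\[
D^{2} \;\geq\; L\bigl(d_{1},\dots,d_{j};\,r_{1},\dots,r_{j};\,\mu_{1},\dots,\mu_{j}\bigr) \;+\; 2\epsilon^{*}D.Z_{j} - Z_{j}^{2},
\]
where $L$ is a linear functional coming from a sequence of Hodge-index pairings and $j$ is a truncation index that one is free to choose. The sign of each $\mu_{i}$ governs its contribution: terms with $\mu_{i} \geq 0$ help, and terms with $\mu_{i} < 0$ must either be dropped by truncating at a smaller $j$ (this is what is done in cases (1) and (2)) or absorbed against the defect $d_{i} - \alpha_{(D,\mathcal{F})}(r_{i}-1)$ via an auxiliary weight (this is the mechanism in case (3)). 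The three cases of the theorem are then exactly the three optimisation regimes for the pair $(j,\text{weight})$. In case (1), $t_{(D,\mathcal{F})} = 1$ forces $\mu_{2}<0$ so the sum collapses to a single block at $j=1$ and the coefficient $2d_{1}/r_{1}$ is read off from $\deg \mathcal{F}_{1} = r_{1}\mu_{1}$. In case (2) I would take $j = t_{(D,\mathcal{F})}$ and run the standard harmonic-mean optimisation of Xiao between the $\alpha_{(D,\mathcal{F}_{t})}$-bound at stage $t$ and the quantity $d_{t}$, producing the characteristic coefficient $2\alpha_{(D,\mathcal{F}_{t})}d_{t}/(d_{t} + \alpha_{(D,\mathcal{F}_{t})})$. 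In both cases the passage from $\deg \mathcal{F}_{t}$ to $\deg \mathcal{F}$ uses the elementary observation that the slopes past $t$ are negative, so $\deg \mathcal{F}_{t} \geq \deg \mathcal{F}$. The supplementary clause $d_{k} = \alpha_{(D,\mathcal{F})}(r_{k}-1)$ is the equality regime of the $\alpha$-inequality at the last stage, in which every intermediate Xiao estimate saturates and one recovers the uniform coefficient $2d_{k}/r_{k}$ independent of $t_{(D,\mathcal{F})}$.

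The principal obstacle lies in case (3), where every slope is negative and there is no positive term in $L$ to anchor the sum. Here one must exploit the full flexibility of the auxiliary weight $w$ appearing in Lemma \ref{4.1}, and the coefficient $C_{(D,\mathcal{F})}$ arises as the minimum of an explicit rational function $w \mapsto \Phi(w)$ on a closed interval. The bifurcation in the definition of $C_{(D,\mathcal{F})}$ reflects whether the critical point of $\Phi$ lies in the interior, which happens precisely when $\alpha_{(D,\mathcal{F})} - 2\alpha_{(D,\mathcal{F})}r_{k} + 2d_{k} \leq 0$ and yields the rational expression $2\alpha_{(D,\mathcal{F})}d_{k}/(-\alpha_{(D,\mathcal{F})} + 2\alpha_{(D,\mathcal{F})}r_{k} - d_{k})$, or on the boundary, in which case one obtains the linear expression $3\alpha_{(D,\mathcal{F})} + 2d_{k} - 2\alpha_{(D,\mathcal{F})}r_{k}$. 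The careful monotonicity and sign analysis required to justify this split, together with the verification that the chosen denominators remain strictly positive, is the most delicate step of the proof.
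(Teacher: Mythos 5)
Your plan for cases (1), (2) and the final clause is essentially the paper's argument: truncate the Harder--Narasimhan filtration at $t_{(D,\mathcal{F})}$, apply the slope inequality of Corollary \ref{Cor slope} to the truncated data $(D,\mathcal{F}_{t_{(D,\mathcal{F})}})$ (this is where $d_i\geq \alpha_{(D,\mathcal{F}_{t})}(r_i-1)$ enters), and then eliminate the leftover slope term $\mu_1+\mu_t$ by a \emph{second} application of Lemma \ref{4.1}, this time to the two-element subsequences $\{Z_1,Z_t\}$, $\{\mu_1,\mu_t\}$, which gives $D^2\geq d_t(\mu_1+\mu_t)+2\epsilon^{*}D.Z_t-Z_t^2$; combining the two produces exactly your ``harmonic-mean'' coefficient $2\alpha d_t/(d_t+\alpha)$, case (1) is the one-block degeneration via Remark \ref{remark4.6}, the passage $\deg\mathcal{F}_t\geq\deg\mathcal{F}$ is indeed just negativity of the later slopes, and the clause $d_k=\alpha_{(D,\mathcal{F})}(r_k-1)$ makes the same combination at stage $k$ collapse to $2d_k/r_k$. (Minor slip: the decomposition is $\epsilon^{*}D=M_i+Z_i$ with $N_i=M_i-\mu_iF$, not $\epsilon^{*}D=N_i+Z_i$.)

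Case (3), however, has a genuine gap as you describe it. Lemma \ref{4.1} contains no auxiliary weight $w$; the only flexibility is the choice of subsequence of $(\mathcal{Z}_i,\mu_i)$, and the two inequalities available here are (I) $D^2\geq 2\alpha\deg\mathcal{F}+A(\mu_1+\mu_k)+2\epsilon^{*}D.Z_k-Z_k^2$, where $A:=\alpha-2\alpha r_k+2d_k$ (after using $A\geq-\alpha$ and $\mu_1<0$), and (II) $D^2\geq d_k(\mu_1+\mu_k)+2\epsilon^{*}D.Z_k-Z_k^2$ from the two-term subsequence. When $A\leq 0$ your elimination is the paper's and yields the rational constant. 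But when $A>0$ no weighted combination or minimisation built from (I) and (II) alone can give the claimed bound: both inequalities only bound $\mu_1+\mu_k$ \emph{above} in terms of $D^2$, whereas a positive coefficient $A$ needs a \emph{lower} bound on $\mu_1+\mu_k$; treating $\mu_1+\mu_k$ as a free (very negative) parameter shows (I) and (II) together imply nothing of the form $D^2\geq C\deg\mathcal{F}+2\epsilon^{*}D.Z_k-Z_k^2$. The missing ingredient is the elementary estimate $\mu_1+\mu_k\geq\deg\mathcal{F}$, valid because all slopes are negative (Lemma \ref{degreelemma}), which substituted into (I) is precisely what produces the linear constant $3\alpha+2d_k-2\alpha r_k$. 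So the dichotomy in $C_{(D,\mathcal{F})}$ is not an interior-versus-boundary critical point of a weight optimisation; it is a switch between two different auxiliary inequalities, one of which your proposal does not supply.
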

\par We make some remarks about the notations introduced in the previous theorem. 
\par There is an iterated blowing-up morphism $\epsilon: \Hat{S} \to S $ that is constructed in Proposition \ref{proposition3.3}.   By definition, the divisor $Z_{i}$ on $\Hat{S}$ is a \emph{fixed part} of $\mathcal{F}_{i},\forall i; 1 \leq i \leq k$ (Definition \ref{miyaoka}). The number $\mu_{i}: = \mu(\mathcal{F}_{i}/\mathcal{F}_{i-1})$ is the slope of the quotient $\mathcal{F}_{i}/\mathcal{F}_{i-1}$ (Proposition \ref{hardernarasimhanfiltration}) of the Harder-Narasimhan filtration of $\mathcal{F}$.

\par We interpret the result of Theorem \ref{main result} as follows. If we consider the data $(D, \mathcal{F})$, we estimate the lower bound of $D^{2}$ by a sum of two parts. The  first one is related to the degree of the sub-bundle $\mathcal{F}_{t_{(D,\mathcal{F})}} \subseteq \mathcal{F}$ and the geometry of the Miyaoka divisors $(N_{i})_{1\leq i \leq t_{(D,\mathcal{F})}}$. The second part captures the negativity of $D$ when $D$ is not positive. The case when $\mathcal{F}$ is semi-stable is discussed in Remark \ref{remark4.6}. As an application, we apply the previous theorem to 
the data $(D, f_{*}\mathcal{O}_{S}(D))$
when $D$ is relatively nef. Thus,  we obtain Corollary \ref{secondmainresult}.
\par This new result pertains to the way in which the constant $\alpha_{(D,f_{*}\mathcal{O}_{S}(D))}$,  defined in Definition \ref{alphadefinition}, arises in lower bounds for $D^2$.  Moreover, the last paragraph in  Corollary \ref{secondmainresult} extends the result in \cite[Thoerem 5]{remarkonslope}, see Example \ref{remarkslopepaper} and Theorem \ref{invarianttheorem}. The form of  Corollary \ref{comparewithstoppino}  can be compared to \cite[Theorem 3.20]{stoppino}. 
\par Among other results, our contribution in this article is to give an explicit description  of the constant  $\alpha$ that arises there.  This is achieved by the constant $\alpha_{(D,\mathcal{F})}$ (which is defined in Definition \ref{alphadefinition}).

\par Recently, in \cite[Theorem E]{godogniviviani}, the authors proved a slope inequality for a data $(D, f_{*}\mathcal{O}_{X}(D))$ where $D$ is a relatively effective  divisor on an $n$-dimensional  fibred variety $f \colon X \rightarrow C$ over a curve with $n \geq 2$, and under the assumptions that $D$ and $f_{*}\mathcal{O}_{X}(D)$  are both  nef. If we apply their result for fibred surfaces, we  can see that our result is  sharper. More precisely, in \cite[Theorem E]{godogniviviani},  they proved if $D_{|_{F}}$ is nef and big, and nonspecial, then 
\[
D^{2} \geq  2 \frac{D.F}{D.F + 1} \deg f_{*}\mathcal{O}_{S}(D) \text{.}
\]
However, if we apply  Corollary \ref{comparewithstoppino} for the data $(D,f_{*}\mathcal{O}_{S}(D))$ where $D$ and $f_{*}\mathcal{O}_{S}(D)$  are both nef, and $D_{|_{F}}$ is nonspecial, then we have 
\[
D^{2} \geq \frac{2\alpha_{(D,f_{*}\mathcal{O}_{S}(D))}D.F }{D.F + \alpha_{(D, f_{*}\mathcal{O}_{S}(D))}} \deg f_{*}\mathcal{O}_{S}(D) \text{.}
\]
Since $\alpha_{(D,f_{*}\mathcal{O}_{S}(D))} > 1$  for $g(F) \neq 0$, we see that the constant $\frac{\alpha_{(D,f_{*}\mathcal{O}_{S}(D))}D.F }{D.F + \alpha_{(D, f_{*}\mathcal{O}_{S}(D))}}$ is strictly bigger than $\frac{D.F}{D.F + 1}$ in general.   

\par
\textbf{Acknowledgements.}  I am very grateful to  my advisors Steven Lu and Nathan Grieve for their constant help, invaluable advice and financial support. I would like to thank the anonymous referee and Jungkai Chen for their very careful reading of this work and for providing numerous comments and helpful suggestions.

\par In the next sections,  $f : S \to C$ is a fibred surface from a smooth complex projective surface
$S$ to a smooth complex projective curve $C$, and $D$ is a relatively effective divisor on $S$. Also, throughout this article unless stated otherwise all locally free sheaves are assumed to be nonzero.

\section{Rational map to a projective bundle}

Let $f: S \to C$ be a fibred surface and $D$ be a relatively effective divisor on $S$. Let $\mathcal{F} \subseteq f_{*}\mathcal{O}_{S}(D)$
be a locally free sub-sheaf of rank $r_{\mathcal{F}}$. There exist always the following
commutative diagrams:
\[
\begin{tikzcd}
S \arrow[r,dashrightarrow, "\psi"]
\arrow{rd}[swap]{f} & \mathbb{P}_{C}(f_{*}\mathcal{O}_{S}(D)) \arrow[d,"\pi"]\\
& C
\end{tikzcd}
\]
\[
\begin{tikzcd}
S \arrow[r,dashrightarrow, "\psi_{\mathcal{F}}"]
\arrow{rd}[swap]{f} & \mathbb{P}_{C}(\mathcal{F}) \arrow[d,"\pi_{\mathcal{F}}"]\\
& C
\end{tikzcd}
\]

\par In the above, $\mathbb{P}_{C}(f_{*}\mathcal{O}_{S}(D))$ (respectively $\mathbb{P}_{C}(\mathcal{F})
$) is the projective bundle of one
dimensional quotients (Grothendieck’s notations) of $f_{*}\mathcal{O}_{S}(D)$ (respectively
of $\mathcal{F}$) and $\pi$ (respectively $\pi_{\mathcal{F}}$) is the projective
morphism from $\mathbb{P}_{C}(f_{*}\mathcal{O}_{S}(D))$ to $C$ (respectively from $\mathbb{P}_{C}(\mathcal{F})$ to $C$). The maps $\psi$ and $\psi_{\mathcal{F}}$ are
rational and defined by the following evaluation maps:
\[
f^{*}f_{*}\mathcal{O}_{S}(D) \longrightarrow \mathcal{O}_{S}(D) \text{,}
\] 
(respectively)
\[
f^{*}\mathcal{F} \longrightarrow \mathcal{O}_{S}(D) \text{.}
\] 
\begin{remark}
\begin{itemize}
\item If we assume $D$ is $f$-globally generated in the sense that
\[
f^{*}f_{*}\mathcal{O}_{S}(D) \longrightarrow \mathcal{O}_{S}(D)
\]
is surjective, then $\psi$ is a morphism by \cite[Proposition II.7.12]{Hartshornebook}.
\item If the map\[
f^{*}\mathcal{F} \longrightarrow \mathcal{O}_{S}(D)
\] is surjective, then $\psi_{\mathcal{F}}$ is also  a morphism by \cite[Proposition II.7.12]{Hartshornebook}.
\end{itemize}
\end{remark}
\par Take a sufficiently very ample divisor $A$ on $C$ such that $f_{*}\mathcal{O}_{S}(D) \otimes \mathcal{O}(A)$ is a very ample vector bundle.  Then the rank of $f_{*}
\mathcal{O}_{S}(D) \otimes \mathcal{O}(A)$ is
$$r = H^{0}(F, D_{|_{F}}) \text{,}$$
and
$$\deg (f_{*}\mathcal{O}_{S}(D) \otimes \mathcal{O}(A)) = \deg f_{*}\mathcal{O}_{S}
(D) + r\deg(A).$$
\begin{remark}
The fact that $f_{*}\mathcal{O}_{S}(D) \otimes \mathcal{O}(A)$ is globally generated is equivalent to $\exists \hspace{0.1cm} n > 0$ such that we have a surjective map:
\[
\mathcal{O}_{C}^{\oplus n} \longrightarrow f_{*}\mathcal{O}_{S}(D) \otimes 
\mathcal{O}(A) \text{,}
\]
and to the fact that, for any $y \in C$ the map given by evaluation of
sections is surjective:
\[
H^{0}(C, f_{*}\mathcal{O}_{S}(D) \otimes \mathcal{O}(A)) \longrightarrow f_{*}\mathcal{O}_{S}(D) \otimes \mathcal{O}(A) _{|_{y}}\text{.}
\]
We thus remark that each section in $H^{0}(F, D_{|_{F}})$ comes from some section of
$H^{0}(C, f_{*}\mathcal{O}_{S}(D) \otimes \mathcal{O}(A))$. In other words, the
following map is surjective:
\[
H^{0}(C, f_{*}\mathcal{O}_{S}(D) \otimes \mathcal{O}(A)) \simeq H^{0}(S, D +
f^{*}A) \longrightarrow H^{0}(F, D_{|_{F}}) \text{.}
\]
\end{remark}
\par Now, 
$\mathbb{P}_{C}(f_{*}\mathcal{O}_{S}(D) \otimes \mathcal{O}(A))$
and 
$\mathbb{P}_{C}(f_{*}\mathcal{O}_{S}(D))$ are isomorphic by an isomorphism $s$, (\cite[Lemma 7.9]{Hartshornebook}).  
\par The rational map $$\phi : S \dashrightarrow \mathbb{P}_{C}
(f_{*}\mathcal{O}_{S}(D) \otimes \mathcal{O}(A)) \text{,}$$ defined by $$f^{*}(f_{*}\mathcal{O}_{S}(D) \otimes \mathcal{O}(A)) \longrightarrow \mathcal{O}_{S}(D) \otimes f^{*} \mathcal{O}(A) $$ is the rational map given by the linear system $|D +
f^{*}A|$, and restricted to the general fiber $F$ of $f$, $\phi_{|_{F}}$ is the
map defined by $|D_{|_{F}}|$.
\par The line bundle $\mathcal{O}_{\mathbb{P}_{C}(f_{*}\mathcal{O}_{S}(D) \otimes \mathcal{O}(A))}(1)$ on $\mathbb{P}_{C}(f_{*}\mathcal{O}_{S}(D) \otimes \mathcal{O}
(A))$ is very ample. Then it gives an embedding of this last projective bundle to a
projective space $\C\mathbb{P}^{N}$ for some $N > 0$. We thus have the following commutative diagram:
\[
\begin{tikzcd}
S
\arrow[r,dashrightarrow, "\phi"]
\arrow{rd}[swap]{f} & \mathbb{P}_{C}(f_{*}\mathcal{O}_{S}(D) \otimes \mathcal{O}(A)) \subseteq \C\mathbb{P}^{N} \arrow[d, "\pi_{A}"] \\
& C
\end{tikzcd}
\]
where $\pi_{A}$ is the projection map, again we have $\psi = s \circ \phi$, the
rational map $\phi$ is defined by the complete linear system $|D + f^{*}A|$. If it
has no nontrivial fixed part, then its image is contained in any hyperplane.
\par We assume that there is a fixed part $Z$ of $|D + f^{*}A|$. Thus, the linear system
$|D - Z + f^{*}A|$ factorizes the map $\phi$ defined by $|D + f^{*}A|$ and  the following properties are satisfied:
\begin{itemize}
\item[(1).] The fixed part $Z$ of $|D + f^{*}A|$, restricted to $F$, is just the fixed
part of the complete linear system $|D_{|_{F}}|$.
\item[(2).] The system $|D - Z + f^{*}A|$ has no fixed part, so it has only a finite
number of base points.
\item[(3).] The fixed part $Z$ is a divisor such that the morphism: $$f^{*}(f_{*}\mathcal{O}_{S}(D) \otimes \mathcal{O}(A)) \longrightarrow \mathcal{O}_{S}(D - Z) \otimes f^{*} \mathcal{O}(A)\text{,}$$ is surjective in codimension 1.
\item[(4).] We can assume that $Z$ has no
horizontal components because $A$ is sufficiently ample.
\end{itemize}
\begin{theorem}\label{2.3}
There exist a series of blow ups  $\epsilon : \Tilde{S} \to S$  and a morphism
\[\lambda_{A}: \Tilde{S} \longrightarrow \mathbb{P}_{C}(f_{*}\mathcal{O}_{S}(D) \otimes \mathcal{O}(A))  \]
such that the following diagram is commutative:
\[
\begin{tikzcd}
\Tilde{S} \arrow{d}[swap]{\epsilon} \arrow[rd, bend left, "\lambda_{A}"]
\\
S
\arrow[r,dashrightarrow, "\phi"]
\arrow[dr,"f", right ]
& \mathbb{P}_{C}(f_{*}\mathcal{O}_{S}(D) \otimes \mathcal{O}(A))   \subseteq \C\mathbb{P}^{N} \arrow[d, "\pi_{A}"] \\
&  C
\end{tikzcd}
\]
$\phi \circ \epsilon = \lambda_{A}$ and
\[
(\lambda_{A})^{*}\mathcal{O}_{\mathbb{P}_{C}(f_{*}\mathcal{O}_{S}(D) \otimes \mathcal{O}(A)) }(1) = \epsilon^{*}(\mathcal{O}(D -
Z) \otimes f^{*}\mathcal{O}(A)) \otimes \mathcal{O}(-E) \text{,}
\]
where $E$ is the exceptional divisor of $\epsilon$.
\end{theorem}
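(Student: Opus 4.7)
The plan is to apply the classical elimination of indeterminacy for rational maps from a smooth projective surface to the map $\phi$, and then identify the pullback of the relative $\mathcal{O}(1)$ with the Cartier divisor $\epsilon^{*}(D - Z + f^{*}A) - E$.

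First, by properties (1)--(4) recorded just before the theorem, the fixed part of the complete linear system $|D + f^{*}A|$ is precisely $Z$, so the moving system $|D - Z + f^{*}A|$ has no fixed component, and therefore its base locus is a finite set of points of $S$. Because $S$ is a smooth projective surface, the standard resolution-of-indeterminacy theorem (iterated blow-ups centered at the base points together with their infinitely near points; see, e.g., Beauville, \emph{Complex Algebraic Surfaces}, II.7) produces a finite sequence of blow-ups $\epsilon: \Tilde{S} \to S$ after which the pulled-back linear system becomes base-point-free. Letting $E$ denote the total exceptional divisor with the appropriate multiplicities, we obtain
\[
\epsilon^{*}|D - Z + f^{*}A| = |M| + E, \qquad M := \epsilon^{*}(D - Z + f^{*}A) - E,
\]
and $|M|$ is base-point-free on $\Tilde{S}$.

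Second, the base-point-free system $|M|$ defines a morphism $\Tilde{S} \to \C\mathbb{P}^{N}$ that agrees with $\phi \circ \epsilon$ wherever the latter is defined, hence everywhere on $\Tilde{S}$. To upgrade this to a morphism $\lambda_{A}$ landing in the projective bundle $\mathbb{P}_{C}(f_{*}\mathcal{O}_{S}(D) \otimes \mathcal{O}(A))$, the next step is to invoke the universal property \cite[Proposition II.7.12]{Hartshornebook}: the evaluation map $f^{*}(f_{*}\mathcal{O}_{S}(D) \otimes \mathcal{O}(A)) \to \mathcal{O}_{S}(D) \otimes f^{*}\mathcal{O}(A)$, after being pulled back by $\epsilon$ and composed with the natural factorization through $\mathcal{O}_{\Tilde{S}}(M)$, yields a surjection onto a line bundle. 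By the same proposition, this surjection determines $\lambda_{A}$ together with the identity
\[
\lambda_{A}^{*}\mathcal{O}_{\mathbb{P}_{C}(f_{*}\mathcal{O}_{S}(D) \otimes \mathcal{O}(A))}(1) = \mathcal{O}_{\Tilde{S}}(M) = \epsilon^{*}(\mathcal{O}(D - Z) \otimes f^{*}\mathcal{O}(A)) \otimes \mathcal{O}(-E),
\]
which is precisely the desired formula. Commutativity with $\pi_{A}$ is automatic because the whole construction is relative over $C$ via $f \circ \epsilon$.

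The main technical point, rather than a serious obstacle, is the bookkeeping of the multiplicities of the infinitely near blow-up centers contributing to $E$: at each stage one subtracts the multiplicity of the current linear system at the base point being resolved, and one has to verify that after finitely many blow-ups the induced system is base-point-free. This is entirely standard on a smooth surface and does not interact with the specific structure of $D$, $Z$, or $A$ beyond the finiteness of the base locus of $|D - Z + f^{*}A|$ established in the first step.
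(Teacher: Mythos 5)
Your proposal is correct and follows essentially the same route as the paper: both resolve the finitely many base points of $|D - Z + f^{*}A|$ by iterated blow-ups in the style of Beauville, set $E$ to be the exceptional divisor with the multiplicities of the system at the (infinitely near) base points, and then use the resulting base-point-free system $|\epsilon^{*}(D - Z + f^{*}A) - E|$ together with the universal property of $\mathbb{P}_{C}(f_{*}\mathcal{O}_{S}(D)\otimes\mathcal{O}(A))$ to obtain $\lambda_{A}$ and the pullback formula. Your explicit appeal to \cite[Proposition II.7.12]{Hartshornebook} for the surjection onto a line bundle is only a slightly more detailed phrasing of what the paper leaves implicit.
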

\begin{remark}
$\epsilon^{*}(\mathcal{O}(D - Z) \otimes f^{*}\mathcal{O}(A)) \otimes \mathcal{O}
(-E)$ is globally generated.
\end{remark}
\begin{proof}[Proof of Theorem \ref{2.3}] The linear system  $|D - Z + f^{*}A|$ has at worst finitely many base  points. If there are none, $\phi$ is a morphism and there is
nothing to prove. We suppose that there is a base point $x$ in $|D - Z + f^{*}A|$.
We take the blow-up in $x$ defined by $\epsilon^{1}$, so  $ |(\epsilon^{1})^{*}
(D - Z + f^{*}A)|$ has a fixed part $k_{1}E_{1}$ with $k_{1} \in \Z, k_{1}\geq1$
and $ |D_{1}| =|(\epsilon^{1})^{*}(D - Z + f^{*}A) -k_{1}E_{1}|$ has no fixed part.
Hence, it defines
a rational map, $\lambda^{1} : S_{1} \dashrightarrow \mathbb{P}_{C}(f_{*}\mathcal{O}_{S}(D) \otimes \mathcal{O}(A))  $ which is identical to
$\phi \circ \epsilon^{1}$. If $\lambda^{1}$
is a morphism, then we are done; if not, we repeat the process. Thus, we get
by induction a sequence $\epsilon^{i}: S_{i} \longrightarrow S_{i-1}$ of blow-ups
and a linear system $|D_{i}|$ with no fixed part, where $D_{i} = (\epsilon^{i})^{*}
D_{i-1} - k_{i}E_{i} $ for $i \geq 1$. 
\par In other words,
we arrive at a system $D_{n}$ with no base points, which defines
a morphism:
\[
\epsilon = \epsilon^{1} \circ ... \circ \epsilon^{n} : \Tilde{S} \longrightarrow S \text{.}
\]
We conclude that $|\epsilon^{*}(D - Z + f^{*}A) -E|$ defines a morphism $$\Tilde{S} \overset{\lambda_{A}}{\longrightarrow} \mathbb{P}_{C}(f_{*}\mathcal{O}_{S}(D) \otimes \mathcal{O}(A)) $$ such that
\[
\epsilon^{*}(\mathcal{O}(D - Z) \otimes f^{*}\mathcal{O}(A)) \otimes \mathcal{O}(-
        E) = (\lambda_{A})^{*}\mathcal{O}_{\mathbb{P}_{C}(f_{*}\mathcal{O}_{S}(D) \otimes \mathcal{O}(A))}(1)
\]
where $E = \sum_{i=1}^{i=n} k_{i}E_{i}$ is the exceptional divisor.
\end{proof}
\par The last proof is inspired by the proof of \cite[Theorem 2.7]{beauville}.
\begin{corollary}\label{2.5}
There exists a morphism $\lambda : \Tilde{S} \to \mathbb{P}_{C}(f_{*}\mathcal{O}_{S}(D)) $ such that the following diagram is commutative:
\[
\begin{tikzcd}
\Tilde{S} \arrow[dr, bend left, "\lambda"]
\arrow{d}[swap]{\epsilon}
& &\\
S \arrow[r,dashrightarrow, "\psi"]
\arrow[rd,"f"] & \mathbb{P}_{C}(f_{*}\mathcal{O}_{S}(D)) \arrow[d,"\pi"]\\
& C
\end{tikzcd}
\]
Moreover, we have
\[\lambda^{*}(\mathcal{O}_{\mathbb{P}_{C}(f_{*}\mathcal{O}_{S}(D))}(1)) = \epsilon^{*}(\mathcal{O}(D - Z)) \otimes \mathcal{O}(-E)\text{.}\]
\end{corollary}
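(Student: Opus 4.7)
The approach is to compose $\lambda_{A}$ from Theorem \ref{2.3} with the canonical identification $s \colon \mathbb{P}_{C}(f_{*}\mathcal{O}_{S}(D) \otimes \mathcal{O}(A)) \xrightarrow{\sim} \mathbb{P}_{C}(f_{*}\mathcal{O}_{S}(D))$ recalled from \cite[Lemma 7.9]{Hartshornebook}, and set $\lambda := s \circ \lambda_{A}$. The required commutativity is then immediate: the outer triangle commutes because $\pi \circ s = \pi_{A}$ and $\pi_{A} \circ \lambda_{A} = f \circ \epsilon$ by Theorem \ref{2.3}, while the relation $\psi \circ \epsilon = \lambda$ (as rational maps where both are defined) follows from $\psi = s \circ \phi$ together with $\phi \circ \epsilon = \lambda_{A}$.

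The line-bundle identity is then a direct pullback computation. I would invoke the standard projective-bundle twist formula: for any line bundle $L$ and any locally free sheaf $\mathcal{E}$ on $C$, under the canonical isomorphism $s \colon \mathbb{P}_{C}(\mathcal{E} \otimes L) \xrightarrow{\sim} \mathbb{P}_{C}(\mathcal{E})$ one has
\[
s^{*}\mathcal{O}_{\mathbb{P}_{C}(\mathcal{E})}(1) \;=\; \mathcal{O}_{\mathbb{P}_{C}(\mathcal{E} \otimes L)}(1) \otimes \pi_{A}^{*}L^{-1}.
\]
Applying this with $\mathcal{E} = f_{*}\mathcal{O}_{S}(D)$ and $L = \mathcal{O}(A)$, then pulling back by $\lambda_{A}$ and using the commutativity $\pi_{A} \circ \lambda_{A} = f \circ \epsilon$, the extra twist $\pi_{A}^{*}\mathcal{O}(-A)$ becomes $\epsilon^{*}f^{*}\mathcal{O}(-A)$, which exactly cancels the $\epsilon^{*}f^{*}\mathcal{O}(A)$ factor appearing in the formula for $\lambda_{A}^{*}\mathcal{O}(1)$ given in Theorem \ref{2.3}. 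What remains is precisely $\epsilon^{*}\mathcal{O}(D - Z) \otimes \mathcal{O}(-E)$, as claimed.

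The only obstacle is pure bookkeeping: one must keep careful track of the direction of the twist in Grothendieck's convention (projective bundle of one-dimensional quotients) used throughout the paper, so that $s^{*}\mathcal{O}(1)$ picks up $\pi_{A}^{*}L^{-1}$ rather than $\pi_{A}^{*}L$. Once the conventions are aligned the proof reduces to the one-line cancellation above, which is why the statement is naturally a corollary of Theorem \ref{2.3} rather than requiring an independent blowing-up argument.
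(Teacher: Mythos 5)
Your proof is correct and is essentially the paper's own argument: the paper likewise sets $\lambda = s \circ \lambda_{A}$ with $s$ the canonical isomorphism $\mathbb{P}_{C}(f_{*}\mathcal{O}_{S}(D)\otimes\mathcal{O}(A)) \simeq \mathbb{P}_{C}(f_{*}\mathcal{O}_{S}(D))$, and deduces the line-bundle identity from the same twist relation $\mathcal{O}_{\mathbb{P}_{C}(f_{*}\mathcal{O}_{S}(D)\otimes\mathcal{O}(A))}(1) = s^{*}\mathcal{O}_{\mathbb{P}_{C}(f_{*}\mathcal{O}_{S}(D))}(1)\otimes\pi_{A}^{*}\mathcal{O}(A)$ by cancelling the $\epsilon^{*}f^{*}\mathcal{O}(A)$ factor via $\pi_{A}\circ\lambda_{A} = f\circ\epsilon$. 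No gaps; your extra remark on the commutativity of the diagram is consistent with what the paper leaves implicit.
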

\begin{proof} By Theorem \ref{2.3},  there exists
a morphism $$\lambda_{A} : \Tilde{S} \longrightarrow \mathbb{P}_{C}(f_{*}\mathcal{O}_{S}(D) \otimes \mathcal{O}(A))$$ which has the following property:
\[
(\lambda_{A})^{*}\mathcal{O}_{\mathbb{P}_{C}(f_{*}\mathcal{O}_{S}(D) \otimes \mathcal{O}(A))}(1) = \epsilon^{*}(\mathcal{O}(D - Z) \otimes f^{*}\mathcal{O}
(A)) \otimes \mathcal{O}(-E)
.\]
But there exists an isomorphism $$s : \mathbb{P}_{C}(f_{*}\mathcal{O}_{S}(D) \otimes \mathcal{O}(A)) \longrightarrow \mathbb{P}_{C}(f_{*}\mathcal{O}_{S}(D)) $$ 
such that
\[
\mathcal{O}_{\mathbb{P}_{C}(f_{*}\mathcal{O}_{S}(D) \otimes \mathcal{O}(A))}(1) =
s^{*}\mathcal{O}_{\mathbb{P}_{C}(f_{*}\mathcal{O}_{S}(D))}(1) \otimes \pi_{A}^{*}\mathcal{O}(A)\text{.}
\]
Therefore, 
\[
(s \circ \lambda_{A})^{*}\mathcal{O}_{\mathbb{P}_{C}(f_{*}\mathcal{O}_{S}(D))}(1) \otimes (\pi_{A} \circ \lambda_{A})^{*}\mathcal{O}(A) = \epsilon^{*}(\mathcal{O}(D - Z)) \otimes (f \circ \epsilon)^{*}\mathcal{O}
(A) \otimes \mathcal{O}(-E)\]
implies
\[
(s \circ \lambda_{A})^{*}\mathcal{O}_{\mathbb{P}_{C}(f_{*}\mathcal{O}_{S}(D))}(1) = \epsilon^{*}(\mathcal{O}(D - Z)) \otimes \mathcal{O}(-E) \text{.}
\]
We take $$\lambda = s \circ \lambda_{A} \text{.}$$
\end{proof}
\begin{remark}\label{2.6}
More generally, for $\mathcal{F} \subseteq f_{*}\mathcal{O}_{S}(D)$ a locally free sub-sheaf,
we take  a sufficiently very ample divisor $A$ on $C$ such that $\mathcal{F} \otimes \mathcal{O}(A)$ is very ample. Let $L_{\mathcal{F}}$ be the linear sub-system of $|D +
f^{*}A|$ which corresponds to the sections of $H^{0}(\mathcal{F} \otimes \mathcal{O}
(A))$.
Let $Z_{\mathcal{F}}$ be the fixed part of $L_{\mathcal{F}}$, so $L_{\mathcal{F}} -
Z_{\mathcal{F}}$ has no fixed part and it corresponds to a rational map from $S$
to a projective sub-variety of $\mathbb{P}_{C}(\mathcal{F} \otimes \mathcal{O}(A))$. 
By the same arguments as above, $\exists \hspace{0.1cm} \Tilde{S}_{\mathcal{F}} \overset{\epsilon_{\mathcal{F}}}{\longrightarrow} S$ which is a chain of blow ups and $\exists \lambda_{\mathcal{F}} : \Tilde{S}_{\mathcal{F}} \longrightarrow \mathbb{P}_{C}(\mathcal{F})$ such that the following diagram is commutative:
\[
\begin{tikzcd}
\Tilde{S}_{\mathcal{F}} \arrow[dr, bend left, "\lambda_{\mathcal{F}}"]
\arrow{d}{\epsilon_{\mathcal{F}}}
& &\\
S \arrow[r,dashrightarrow, "\psi_{\mathcal{F}}"]
\arrow[rd,"f"] & \mathbb{P}_{C}(\mathcal{F}) \arrow[d,"\pi_{\mathcal{F}}"]\\
& C
\end{tikzcd}
\]
and \[
(\lambda_{\mathcal{F}})^{*}(\mathcal{O}_{\mathbb{P}_{C}(\mathcal{F})}(1)) = \epsilon_{\mathcal{F}}^{*}(\mathcal{O}(D - Z_{\mathcal{F}})) \otimes \mathcal{O}(-
E_{\mathcal{F}})
\text{,}\]
where $E_{\mathcal{F}}$ is the exceptional divisor of $\epsilon_{\mathcal{F}}$.
\end{remark}

\section{Harder-Narasimhan filtration}

\par In this section, we study the Harder-Narasimhan filtration within the context of fibred
surfaces.
\begin{proposition}[{\cite{HarderNarismhan}}]\label{hardernarasimhanfiltration}
Let $\mathcal{F}$ be a vector bundle over a smooth projective curve
$C$. There exists a unique sequence of vector sub-bundles of $\mathcal{F}$:
\[
0 = \mathcal{F}_{0} \subsetneq \mathcal{F}_{1} \subsetneq ... \subsetneq \mathcal{F}_{k-1} \subsetneq \mathcal{F}_{k} = \mathcal{F}\text{,}
\]
that satisfies the following conditions:
\begin{itemize}
\item[(1).] For $i = 1, . . . , k$, $\mathcal{F}_{i}/\mathcal{F}_{i - 1}$ is a
semi-stable vector bundle.
\item[(2).] For any $i = 1 ,..., k$, setting $\mu_{i} : = \mu (\mathcal{F}_{i}/\mathcal{F}_{i - 1}) = \frac{\deg(\mathcal{F}_{i}/\mathcal{F}_{i - 1})}{\rk(\mathcal{F}_{i}/\mathcal{F}_{i - 1})}$, we have:
\[
\mu_{1} > \mu_{2} > ... > \mu_{k}\text{.}
\]
\end{itemize}
\end{proposition}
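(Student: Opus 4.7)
The plan is to build the filtration inductively by extracting at each stage the so-called \emph{maximal destabilizing subbundle}, then iterating on the quotient. The key technical input is that for any vector bundle $\mathcal{E}$ on the smooth projective curve $C$, the set of slopes $\{\mu(\mathcal{G}) \colon 0 \neq \mathcal{G} \subseteq \mathcal{E}\}$ admits a maximum. Boundedness from above can be shown by noting that on a curve any torsion-free subsheaf is locally free, so for each fixed rank $r \leq \rk \mathcal{E}$ the determinants give inclusions $\wedge^{r}\mathcal{G} \hookrightarrow \wedge^{r}\mathcal{E}$ and line subbundles of a fixed vector bundle have degree bounded above (via global generation after a sufficient twist and Riemann--Roch). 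That the supremum is attained then follows from the boundedness of rank-and-degree-bounded families of subsheaves on $C$.

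Writing $\mu_{\max}(\mathcal{F})$ for this maximum, I would let $\mathcal{F}_{1}$ be a saturated subsheaf whose slope equals $\mu_{\max}(\mathcal{F})$ and which has maximal rank among such subsheaves; saturation ensures $\mathcal{F}/\mathcal{F}_{1}$ is torsion-free, hence locally free on $C$. Semi-stability of $\mathcal{F}_{1}$ is immediate: any $0 \neq \mathcal{G} \subsetneq \mathcal{F}_{1}$ satisfies $\mu(\mathcal{G}) \leq \mu_{\max}(\mathcal{F}) = \mu(\mathcal{F}_{1})$. Uniqueness of $\mathcal{F}_{1}$ comes from the standard sum trick: for a second such subsheaf $\mathcal{F}_{1}'$, the exact sequence
\[
0 \to \mathcal{F}_{1} \cap \mathcal{F}_{1}' \to \mathcal{F}_{1} \oplus \mathcal{F}_{1}' \to \mathcal{F}_{1} + \mathcal{F}_{1}' \to 0
\]
together with additivity of ranks and degrees and the bound $\mu(\mathcal{F}_{1} \cap \mathcal{F}_{1}') \leq \mu_{\max}$ forces $\mathcal{F}_{1} + \mathcal{F}_{1}'$ to attain slope $\mu_{\max}$; the rank-maximality of $\mathcal{F}_{1}$ then gives $\mathcal{F}_{1} = \mathcal{F}_{1} + \mathcal{F}_{1}' = \mathcal{F}_{1}'$.

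Now I would induct on $\rk \mathcal{F}$. Applying the previous step to $\mathcal{F}/\mathcal{F}_{1}$ produces a unique maximal destabilizing subbundle $\overline{\mathcal{F}}_{2} \subseteq \mathcal{F}/\mathcal{F}_{1}$; its preimage $\mathcal{F}_{2}$, together with the filtration on $\mathcal{F}/\mathcal{F}_{1}$ obtained inductively, yields the desired chain and condition (1). For the strict slope inequality $\mu_{i} > \mu_{i+1}$, note that $\mu_{i} = \mu_{\max}(\mathcal{F}/\mathcal{F}_{i-1})$; if one had $\mu_{i+1} \geq \mu_{i}$, the preimage of $\overline{\mathcal{F}}_{i+1}$ in $\mathcal{F}/\mathcal{F}_{i-1}$ would be a subsheaf of strictly larger rank than $\mathcal{F}_{i}/\mathcal{F}_{i-1}$ and slope at least $\mu_{i}$, contradicting the rank-maximality used to define $\mathcal{F}_{i}$.

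For uniqueness of the full filtration, I would invoke the standard fact that $\mathrm{Hom}(\mathcal{G},\mathcal{H}) = 0$ whenever $\mathcal{G}, \mathcal{H}$ are semi-stable with $\mu(\mathcal{G}) > \mu(\mathcal{H})$. Given any other filtration $(\mathcal{F}'_{j})$ satisfying (1) and (2), composing $\mathcal{F}_{1} \hookrightarrow \mathcal{F} \twoheadrightarrow \mathcal{F}/\mathcal{F}'_{j}$ for each $j$ and comparing slopes of the semi-stable quotients $\mathcal{F}'_{j+1}/\mathcal{F}'_{j}$ forces $\mathcal{F}_{1} \subseteq \mathcal{F}'_{1}$; the symmetric argument and induction on the quotients finish the proof. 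The main obstacle is the first paragraph: once boundedness and attainment of $\mu_{\max}$ are secured, everything else reduces to clean diagram chasing and a short induction.
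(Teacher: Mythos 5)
The paper gives no proof of this proposition: it is quoted as a classical result with a citation to Harder--Narasimhan, so there is no internal argument to compare yours against. Your proof is the standard one --- bound the slopes of subsheaves to get a maximal destabilizing subbundle, prove its semi-stability and uniqueness by the sum/intersection trick, build the filtration by induction on the quotient, get strictness of the slopes from rank-maximality, and deduce uniqueness of the whole filtration from the vanishing of $\mathrm{Hom}$ between semi-stable bundles of strictly decreasing slope --- and it is correct as a sketch. Two small points are worth tightening: attainment of $\mu_{\max}$ does not require any boundedness of families of subsheaves, since degrees are integers and ranks are at most $\rk \mathcal{F}$, so the set of slopes is a discrete set of rationals bounded above (boundedness above already follows from $\deg L \leq h^{0}(L) + g - 1 \leq h^{0}(\wedge^{r}\mathcal{F}) + g - 1$ for the line subsheaf $L = \wedge^{r}\mathcal{G} \subseteq \wedge^{r}\mathcal{F}$); and you should record that saturating a maximal-slope subsheaf can only increase the degree while fixing the rank, so a saturated representative of slope $\mu_{\max}$ does exist, which is what makes $\mathcal{F}/\mathcal{F}_{1}$ locally free and lets the induction run.
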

\par In the context of Proposition \ref{hardernarasimhanfiltration} above, the filtration is called the \emph{Harder-Narasimhan filtration} of $\mathcal{F}$.
We set $\mu_{f} = \mu_{k}$ and call it the \emph{final slope} of $\mathcal{F}$.
The following elementary lemma is important in what follows.
\begin{lemma}\label{degreelemma}
Let $r_{i}$ be the rank of $\mathcal{F}_{i}$. Then
\[
\deg \mathcal{F} = \sum_{i = 1}^{k - 1} r_{i} (\mu_{i} - \mu_{i + 1 }) + r_{k}\mu_{k} \text{.}
\]
\end{lemma}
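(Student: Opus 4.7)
The plan is to reduce the identity to an Abel (summation by parts) manipulation once the degree of $\mathcal{F}$ is rewritten as a telescoping sum of degrees of the successive quotients from the Harder-Narasimhan filtration.

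First, I would record that for each $i$ the short exact sequence
\[
0 \longrightarrow \mathcal{F}_{i-1} \longrightarrow \mathcal{F}_{i} \longrightarrow \mathcal{F}_{i}/\mathcal{F}_{i-1} \longrightarrow 0
\]
of locally free sheaves on $C$ yields $\deg \mathcal{F}_{i} = \deg \mathcal{F}_{i-1} + \deg(\mathcal{F}_{i}/\mathcal{F}_{i-1})$. Iterating, and setting $\mathcal{F}_{0} = 0$, $r_{0} = 0$, I obtain
\[
\deg \mathcal{F} \;=\; \deg \mathcal{F}_{k} \;=\; \sum_{i=1}^{k} \deg(\mathcal{F}_{i}/\mathcal{F}_{i-1}).
\]
By the definition of $\mu_{i}$ in Proposition \ref{hardernarasimhanfiltration}, together with $\rk(\mathcal{F}_{i}/\mathcal{F}_{i-1}) = r_{i} - r_{i-1}$, this gives the basic expression
\[
\deg \mathcal{F} \;=\; \sum_{i=1}^{k} \mu_{i}(r_{i} - r_{i-1}).
\]

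Next, I would apply Abel's summation formula. Splitting the sum and shifting the index in the second piece:
\[
\sum_{i=1}^{k} \mu_{i}(r_{i} - r_{i-1}) \;=\; \sum_{i=1}^{k} \mu_{i} r_{i} \;-\; \sum_{i=0}^{k-1} \mu_{i+1} r_{i} \;=\; \sum_{i=1}^{k-1} r_{i}(\mu_{i} - \mu_{i+1}) + r_{k}\mu_{k} - r_{0}\mu_{1}.
\]
Since $r_{0} = 0$, the last term vanishes and the claimed identity follows. There is no genuine obstacle here: the only things to verify are the additivity of degree in short exact sequences of locally free sheaves on a smooth projective curve (standard) and the correctness of the reindexing in the Abel summation, for which keeping careful track of the boundary terms at $i = 0$ and $i = k$ is the only point requiring attention.
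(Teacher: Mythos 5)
Your argument is correct and follows the same route as the paper: additivity of degree along the short exact sequences of the Harder--Narasimhan filtration gives $\deg \mathcal{F} = \sum_{i=1}^{k}\mu_{i}(r_{i}-r_{i-1})$, and the summation by parts (which the paper leaves implicit) yields the stated identity. No issues.
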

\begin{proof} Indeed, we consider the exact sequence:
\[
0 \longrightarrow \mathcal{F}_{k-1} \longrightarrow \mathcal{F}_{k} \longrightarrow
\mathcal{F}_{k} / \mathcal{F}_{k-1} \longrightarrow 0
\text{.}\]
From the additivity of degree, we have
\[
\deg \mathcal{F}_{k} = \deg \mathcal{F}_{k-1} + \deg \mathcal{F}_{k} / \mathcal{F}_{k-1}
\text{.}\]
Similarly, we have
\[
\deg \mathcal{F}_{k-1} = \deg \mathcal{F}_{k-2} + \deg \mathcal{F}_{k-1} / \mathcal{F}_{k-2}
\text{.}\]
And so, by induction, we can conclude that
\[
\deg \mathcal{F}_{k} = \sum_{i = 1}^{k} \deg \mathcal{F}_{i} / \mathcal{F}_{i-1}
\text{.}\]
From the definition of slope, for every $i = 1, .., k$ we have: $$\deg \mathcal{F}_{i} / \mathcal{F}_{i-1} = \mu_{i}(r_{i} - r_{i-1}) \text{.}$$ Thus, we obtain the
desired formula.
\end{proof}
Consider now a fibred surface $f: S \to C$. Let $F$ be its general fiber, let $D$ be a relatively effective divisor on $S$, and let  $(\mathcal{F}_{i})_{0 \leq i \leq k}$ be the Harder-Narasimhan filtration of $ \mathcal{F} \subseteq \mathcal{E} =
f_{*}\mathcal{O}_{S}(D)$. By a  repeated application of Remark \ref{2.6} to each of the $\mathcal{F}_{i}$'s, we have the following proposition.
\begin{proposition}\label{proposition3.3}
There
exists a suitable smooth projective surface $\Hat{S}$ and a birational morphism $\epsilon: \Hat{S} \to S$ such that the following diagram is commutative $\forall i; 1 \leq i \leq k$:
\[
\begin{tikzcd}
\Hat{S} \arrow[ddr, bend left, "\lambda_{i} "]
\arrow[d, "\epsilon_{i}"]
\arrow[dd, bend right, swap, "\epsilon"]
& &\\
\Tilde{S}_{\mathcal{F}_{i}} \arrow[dr, "\lambda_{\mathcal{F}_{i}}"]
\arrow[d, "\epsilon_{\mathcal{F}_{i}}"]
& &\\
S \arrow[r,dashrightarrow, "\psi_{\mathcal{F}_{i}}"]
\arrow[rd,"f"] & \mathbb{P}_{C}(\mathcal{F}_{i}) \arrow[d,"\pi_{i}"]\\
& C
\end{tikzcd}
\]
Where $\epsilon_{\mathcal{F}_{i}}$ is a blow-up morphism along a finite number of points $\{x_{i_{1}},\dots,x_{i_{m_i}} \}$, $1 \leq i \leq k$, as defined in the proof of Theorem \ref{2.3} and Remark \ref{2.6}.
\end{proposition}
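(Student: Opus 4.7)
The plan is to build $\hat{S}$ by iterated elimination of indeterminacy, applying the construction of Theorem \ref{2.3} (equivalently, Remark \ref{2.6}) once for each sub-bundle $\mathcal{F}_i$ of the Harder--Narasimhan filtration and accumulating the blow-ups as we go. The key observation that makes this possible is that the proof of Theorem \ref{2.3} only uses that the source is a smooth projective surface, so the same elimination procedure applies verbatim to a rational map issuing from any surface obtained from $S$ by a finite sequence of point blow-ups.

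Concretely, I will start with $\hat{S}^{(0)} := S$ and construct inductively, for $i = 1, \ldots, k$, a smooth projective surface $\hat{S}^{(i)}$ together with a birational morphism $e^{(i)} : \hat{S}^{(i)} \to S$ given by a composition of point blow-ups, and morphisms $\hat{\lambda}_j : \hat{S}^{(i)} \to \mathbb{P}_C(\mathcal{F}_j)$ resolving $\psi_{\mathcal{F}_j}$ for every $j \leq i$. To pass from $\hat{S}^{(i-1)}$ to $\hat{S}^{(i)}$, I would pull back the linear system $L_{\mathcal{F}_i}$ from Remark \ref{2.6} along $e^{(i-1)}$; since the source is smooth, the base locus of its mobile part is a finite set of points, and Theorem \ref{2.3}'s procedure yields a further composition of point blow-ups $\sigma_i : \hat{S}^{(i)} \to \hat{S}^{(i-1)}$ after which the pulled-back system is base-point free and defines a morphism $\hat{\lambda}_i : \hat{S}^{(i)} \to \mathbb{P}_C(\mathcal{F}_i)$. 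The previously-constructed $\hat{\lambda}_j$ for $j < i$ persist on $\hat{S}^{(i)}$ by composing with $\sigma_i$. Setting $\hat{S} := \hat{S}^{(k)}$, $\epsilon := e^{(k)}$ and $\lambda_i := \hat{\lambda}_i$ then gives the outer and diagonal arrows of the claimed diagram.

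It remains to produce the intermediate arrow $\epsilon_i : \hat{S} \to \tilde{S}_{\mathcal{F}_i}$ through which both $\epsilon$ and $\lambda_i$ factor. Since $\tilde{S}_{\mathcal{F}_i}$ is, by its construction in Remark \ref{2.6}, the tower of point blow-ups of $S$ resolving precisely the base locus of $L_{\mathcal{F}_i}$, and since $\hat{S}$ is a finer tower of point blow-ups of $S$ that also resolves that same base locus (together with those of the other $L_{\mathcal{F}_j}$), the induced birational map $\hat{S} \dashrightarrow \tilde{S}_{\mathcal{F}_i}$ must extend to a morphism: we already have a morphism $\hat{S} \to S \times_C \mathbb{P}_C(\mathcal{F}_i)$ whose image lies in the closure of the graph of $\psi_{\mathcal{F}_i}$, and $\tilde{S}_{\mathcal{F}_i}$ is a resolution of that closure, so by the universal property of iterated point blow-ups on smooth surfaces (equivalently, Zariski's Main Theorem) the factorization $\epsilon_i$ exists uniquely. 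The identities $\epsilon_{\mathcal{F}_i} \circ \epsilon_i = \epsilon$ and $\lambda_{\mathcal{F}_i} \circ \epsilon_i = \lambda_i$ then hold by construction.

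The main obstacle, though mild, lies in the inductive step: one needs that on the intermediate surface $\hat{S}^{(i-1)}$ the pulled-back linear system still has only finitely many base points and that the fixed part separates cleanly from the mobile part in a manner compatible with the earlier blow-ups. This reduces to the elementary fact that pull-back of a linear system along a blow-up at a point on a smooth surface adds at most a multiple of the exceptional divisor to the fixed part, so the mobile part remains a linear system with zero-dimensional base locus, and Theorem \ref{2.3}'s point-by-point resolution applies without modification at every stage.
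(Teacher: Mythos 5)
Your proposal is correct and is essentially the paper's argument: the paper simply blows up $S$ at the union of all the centres produced by Remark \ref{2.6}, so that each $\epsilon_{i}$ is visibly the blow-up of $\tilde{S}_{\mathcal{F}_{i}}$ at the complementary centres, while you build the same common resolution sequentially and then recover $\epsilon_{i}$ abstractly. The only step to phrase more carefully is the factorization $\epsilon_{i}$: a morphism to the closure of the graph of $\psi_{\mathcal{F}_{i}}$ does not automatically lift through its resolution, but the universal property of point blow-ups that you also invoke does give it, since every centre (including infinitely near ones) of $\tilde{S}_{\mathcal{F}_{i}} \to S$ is a base point of the mobile part of $L_{\mathcal{F}_{i}}$ and therefore has one-dimensional preimage in $\hat{S}$, so the map factors through each successive blow-up.
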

\begin{proof}
Set $\bigcup_{1\leq i \leq k}\{x_{i_{1}},\dots,x_{i_{m_i}} \} = \{q_{1},\dots, q_{m}\}$ and  let $\Hat{S} = \mathbb{BL}_{q_{1},..., q_{m}}(S)$ be the blowing up of $S$ at $q_{1},\dots,q_m$. Then there exists a blowing up morphism $\epsilon_{i}$ along $\{q_{1},..., q_{m}\} \backslash
\{x_{i_{1}},\dots,x_{i_{m_i}} \}$  which fits into the diagram above and 
$\epsilon = \epsilon_{i} \circ \epsilon_{\mathcal{F}_{i}}: \Hat{S} \to S$. 
As before, we fix a sufficiently ample divisor $A$ on $C$.
Define the map $$\lambda_{i}: \Hat{S} \to \mathbb{P}_{C}(\mathcal{F}_{i})$$ by the linear system 
$|\epsilon^{*}(D - Z_{\mathcal{F}_{i}}) - E|$, and furthermore we have
$$\lambda_{i} = \lambda_{\mathcal{F}_{i}} \circ \epsilon_{i}\text{.}$$
Moreover, for any $\mathcal{F}_{i}$ in the filtration, we have
\[
\lambda_{i}^{*}(\mathcal{O}_{\mathbb{P}_{C}(\mathcal{F}_{i})}(1)) = \epsilon^{*}(\mathcal{O}(D - Z_{\mathcal{F}_{i}})) \otimes  \mathcal{O}(-E)
\text{.}\]
Where  $Z_{\mathcal{F}_{i}}
$ is the fixed part of $L_{\mathcal{F}_{i}} \subseteq |D
+ f^{*}A|$ which correspond to sections of $H^{0}(\mathcal{F}_{i} \otimes \mathcal{O}(A))$. Here $E$ is the exceptional divisor of $\epsilon$.
\end{proof}
\par In what follows, we study the fibred surface $\Hat{f}:= f \circ \epsilon : \Hat{S} \to C$ and we denote by $F$ its general fiber.
 \begin{defn}[Compare with  {\cite[Definition 3.11]{stoppino}}]\label{miyaoka}
In this setting, just described above, $\forall i ; 1 \leq i \leq k$, we define
the following divisors on  $\Hat{S}$:
\begin{itemize}
\item $Z_{i}:= \epsilon^{*}Z_{\mathcal{F}_{i}} + E$, the \emph{fixed
part} of the vector sub-bundle $\mathcal{F}_{i}, \forall i; 1 \leq i \leq k$. 
\item $M_{i}:= \lambda_{i}^{*}(\mathcal{O}_{\mathbb{P}_{C}(\mathcal{F}_{i})}(1))$, the \emph{moving part}
of the vector sub-bundle $\mathcal{F}_{i}, \forall i; 1 \leq i \leq k$.
\item Set $N_{i}: = M_{i} - \mu_{i} F, \forall i; 1 \leq i \leq k $. We call this the $i^{th}$
\emph{Miyaoka divisor}. 
\end{itemize}
\end{defn}
\par Applying, \cite{Miyaoka}, \cite{Nakayama} and \cite[Proposition 6.4.11]
{positivityvolume2}, we prove the following Lemma \ref{Miyaoka:lemma} using the language of $\Q$-twisted vector bundles. First,  we recall the definition of a $\Q$-twisted vector bundle.
\begin{defn}({See \cite[Definition 6.2.1]{positivityvolume2}}). A \emph{$\Q$-twisted vector bundle} $\mathcal{E}\langle \delta \rangle$ on a projective variety $X$ consists of a vector bundle $\mathcal{E}$ defined up to isomorphism, and a $\Q$-numerical equivalence class $\delta \in N_{\Q}^{1}(X)$. If $D$ is a $\Q$-divisor, we write $\mathcal{E}\langle D \rangle$ for the twist $\mathcal{E}$ by the numerical equivalence class of 
$D$. We define \emph{$\Q$-isomorphism} of $\Q$-twisted bundles to be the equivalence relation generated by saying that $\mathcal{E}\langle A + D \rangle$ is equivalent to $(\mathcal{E}\otimes \mathcal{O}(A))\langle D \rangle$ for all integral divisors $A$ and $\Q$-divisors $D$.
\end{defn}
\begin{lemma}[Compare with  {\cite[Corollary IV.3.8]{Nakayama}}]\label{Miyaoka:lemma}
 $\forall i; 1 \leq i \leq k, \hspace{0.2cm}N_{i}$ are nef divisors on $\Hat{S}$.
\end{lemma}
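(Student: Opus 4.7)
The plan is to deduce nefness of $N_i$ from Miyaoka's well-known nefness criterion for $\mathbb{Q}$-twisted vector bundles on curves, which is precisely what the paper's references \cite{Miyaoka}, \cite{Nakayama}, \cite{positivityvolume2} suggest. Recall that for any vector bundle $\mathcal{E}$ on a smooth projective curve $C$, the $\mathbb{Q}$-twisted bundle $\mathcal{E}\langle -\mu_{\min}(\mathcal{E})\cdot p\rangle$ is nef, where $p\in C$ is any point and $\mu_{\min}(\mathcal{E})$ denotes the minimum slope among the Harder-Narasimhan quotients of $\mathcal{E}$. In concrete terms, this says that the $\mathbb{Q}$-divisor $\mathcal{O}_{\mathbb{P}_{C}(\mathcal{E})}(1) - \mu_{\min}(\mathcal{E})\,\pi^{*}p$ is nef on $\mathbb{P}_{C}(\mathcal{E})$.

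I would apply this to $\mathcal{E} = \mathcal{F}_i$. The key observation is that the truncated filtration
\[
0 = \mathcal{F}_{0} \subsetneq \mathcal{F}_{1} \subsetneq \dots \subsetneq \mathcal{F}_{i}
\]
is itself the Harder-Narasimhan filtration of $\mathcal{F}_i$, because the property that each graded piece is semi-stable together with the strict decrease of slopes $\mu_{1} > \mu_{2} > \dots > \mu_{i}$ (inherited from the HN filtration of $\mathcal{F}$) characterizes the HN filtration uniquely by Proposition \ref{hardernarasimhanfiltration}. In particular, $\mu_{\min}(\mathcal{F}_i) = \mu_i$. Thus Miyaoka's criterion gives that
\[
\mathcal{O}_{\mathbb{P}_{C}(\mathcal{F}_i)}(1) - \mu_{i}\,\pi_{i}^{*}p
\]
is nef on $\mathbb{P}_{C}(\mathcal{F}_i)$.

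The final step is to pull this nef $\mathbb{Q}$-divisor back along the morphism $\lambda_{i}\colon \hat{S} \to \mathbb{P}_{C}(\mathcal{F}_{i})$ constructed in Proposition \ref{proposition3.3}, and recognise the result as $N_i$. By definition $\lambda_{i}^{*}\mathcal{O}_{\mathbb{P}_{C}(\mathcal{F}_{i})}(1) = M_{i}$, while $\pi_{i}\circ \lambda_{i} = \hat{f}$, so $\lambda_{i}^{*}\pi_{i}^{*}p = \hat{f}^{*}p$, which is numerically equivalent to the class of a general fiber $F$. Therefore
\[
\lambda_{i}^{*}\bigl(\mathcal{O}_{\mathbb{P}_{C}(\mathcal{F}_{i})}(1) - \mu_{i}\pi_{i}^{*}p\bigr) \equiv M_{i} - \mu_{i} F = N_{i},
\]
and since pullbacks of nef $\mathbb{Q}$-divisors under morphisms remain nef, we conclude that $N_i$ is a nef $\mathbb{Q}$-divisor; being integral it is a nef divisor.

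I do not expect a serious obstacle: the only delicate point is keeping track of the two sources of the HN filtration (that of $\mathcal{F}$ versus that of $\mathcal{F}_i$) and verifying that $\mu_{\min}(\mathcal{F}_i) = \mu_i$. The rest is a formal pullback computation using the identification of $M_i$ and the fact that $\hat{f}^{*}p \equiv F$.
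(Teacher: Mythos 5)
Your argument is correct, and it is the route the paper explicitly mentions (in the remark following the lemma) but deliberately does not take: you invoke the Miyaoka--Nakayama nefness criterion as a black box, namely that $\mathcal{E}\langle -\mu_{\min}(\mathcal{E})\,p\rangle$ is nef for any vector bundle on a curve, apply it to $\mathcal{F}_i$ after the (correct) observation that the truncation $0\subsetneq\mathcal{F}_1\subsetneq\dots\subsetneq\mathcal{F}_i$ is the Harder--Narasimhan filtration of $\mathcal{F}_i$ so that $\mu_{\min}(\mathcal{F}_i)=\mu_i$, and then conclude by the same pullback computation along $\lambda_i$ (using $\lambda_i^{*}\mathcal{O}_{\mathbb{P}_C(\mathcal{F}_i)}(1)=M_i$ and $\lambda_i^{*}\pi_i^{*}p\equiv F$) that appears at the end of the paper's proof. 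The paper instead reproves the needed nefness of the $\Q$-twist $\mathcal{F}_i\langle-\delta_i\rangle$ from scratch: it uses \cite[Proposition 6.4.11]{positivityvolume2} only for the semi-stable graded pieces $\mathcal{G}_j\langle-\delta_j\rangle$ and then runs an induction along the exact sequences $0\to\mathcal{F}_{i-1}\to\mathcal{F}_i\to\mathcal{G}_i\to 0$, exploiting $\mu_{i-1}>\mu_i$ to upgrade $\mathcal{F}_{i-1}\langle-\delta_{i-1}\rangle$ nef to $\mathcal{F}_{i-1}\langle-\delta_i\rangle$ ample, so that the extension is nef. What your version buys is brevity and a direct appeal to the cited result \cite[Corollary IV.3.8]{Nakayama}; what the paper's version buys is a self-contained argument in the $\Q$-twisted language, essentially re-deriving that criterion in the case at hand. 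One small correction: since $\mu_i\in\Q$, the class $N_i=M_i-\mu_i F$ is in general only a $\Q$-divisor, so the conclusion should be that $N_i$ is a nef $\Q$-divisor (as the paper itself uses later), not that it is integral.
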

\begin{proof}
Fix $i; 1 \leq i \leq k$, let us see that the $\Q$-twisted vector bundle $\mathcal{F}_{i}\langle - \frac{c_{1}(\mathcal{F}_{i}/\mathcal{F}_{i-1})}{\
rk(\mathcal{F}_{i}/\mathcal{F}_{k-1})} \rangle $ is nef.
\par We define the following quotient bundles and $\Q$-divisors: \[\left\{
\begin{array}{ll}
\mathcal{G}_{i} = \mathcal{F}_{i}/\mathcal{F}_{i-1} \\
\delta_{i} = \frac{c_{1}( \mathcal{G}_{i})}{\rk(\mathcal{G}_{i})}.
\end{array}
\right.\]
However, $\mathcal{G}_{i}\langle - \delta_{i} \rangle$ is a nef vector bundle by \cite[Proposition
6.4.11]{positivityvolume2}. Furthermore, $\deg \delta_{i} = \mu_{i}$. Thus
\[
-\deg \delta_{1} < -\deg \delta_{2} < ... < -\deg \delta_{k}
.\]
Using the following  exact sequence of vector bundles:
\[
0 \longrightarrow \mathcal{F}_{i-1} \longrightarrow \mathcal{F}_{i}  \longrightarrow \mathcal{G}_{i} \longrightarrow 0 \text{,}
\]
we  next prove that $\mathcal{F}_{i} \langle - \delta_{i} \rangle$ is nef by induction, $\forall i; 1\leq i \leq k$. For $i = 1$, $\mathcal{F}_{1} \langle - \delta_{1} \rangle = \mathcal{G}_{1} \langle - \delta_{1} \rangle$ which is nef. Now, assume that $\mathcal{F}_{i-1} \langle - \delta_{i-1} \rangle$ is a nef bundle. Then $\mathcal{F}_{i-1} \langle - \delta_{i} \rangle$ is $\Q$-ample. Using the above exact sequence and since $\mathcal{F}_{i-1} \langle - \delta_{i} \rangle$ and $\mathcal{G}_{i} \langle - \delta_{i} \rangle$ are nef, we see that $\mathcal{F}_{i} \langle - \delta_{i} \rangle$ is a nef bundle. 
\par Thus, $\mathcal{O}_{\mathbb{P}_{C}(\mathcal{F}_{i} \langle - \delta_{i} \rangle)}(1)$ is a nef line bundle. So $\mathcal{O}_{\mathbb{P}_{C}(\mathcal{F}_{i} )}(1)) \otimes \pi_{i}^{*} \mathcal{O}(- \delta_{i})$ is nef, then by the remarks that $\lambda_{i}^{*}\mathcal{O}_{\mathbb{P}_{C}(\mathcal{F}_{i} )}(1) = M_{i}$ and $\lambda_{i}^{*}(\pi_{i}^{*} \mathcal{O}(- \delta_{i})) = \mathcal{O}(-\mu_{i} F)$, we conclude that 
$N_{i}$ is nef, $\forall i; 1 \leq i \leq k $. 
\end{proof}
\begin{remark}
It is noted that the above lemma can be proven by the original Xiao’s argument, leveraging the Miyaoka-Nakayama result \cite[Corollary IV.3.8]{Nakayama}. The argument presented in the lemma above serves as an alternative proof using the language of $\mathbb{Q}$-twisted vector bundles.
\end{remark}
\begin{lemma}\label{3.6}
$\forall i; 1 \leq i \leq k, \hspace{0.2cm} r_{i} = \rk \mathcal{F}_{i} \leq h^{0}(F, {N_{i}}_{|_{F}})$.
\end{lemma}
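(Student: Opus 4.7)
The plan is to exploit the morphism $\lambda_i : \Hat{S} \to \mathbb{P}_C(\mathcal{F}_i)$ from Proposition \ref{proposition3.3} and restrict everything to a general fiber $F$. By cohomology and base change, since $\mathcal{F}_i \hookrightarrow f_*\mathcal{O}_S(D)$ is a locally free subsheaf of rank $r_i$, for a general point $c \in C$ lying under the fiber $F$, the fiber $\mathcal{F}_i \otimes k(c)$ embeds as an $r_i$-dimensional subspace into $(f_*\mathcal{O}_S(D))_c \otimes k(c) \cong H^0(F, D_{|_F})$. The projective bundle $\mathbb{P}_C(\mathcal{F}_i) \to C$ has fiber $\mathbb{P}(\mathcal{F}_i \otimes k(c)) \cong \mathbb{P}^{r_i-1}$ over $c$, so restriction gives a morphism $\lambda_i|_F : F \to \mathbb{P}^{r_i-1}$.

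Next, I would identify $\lambda_i|_F$ with the morphism defined by the $r_i$-dimensional linear subsystem of $|D_{|_F}|$ cut out by $\mathcal{F}_i \otimes k(c)$. A general fiber $\hat{F}$ of $\hat{f} = f \circ \epsilon$ misses all the blown-up points of $\epsilon$, hence $\hat{F} \cong F$ and $E_{|_{\hat{F}}} = 0$; moreover $Z_{\mathcal{F}_i}$ has no horizontal components by property (4) after Remark 2.1 of the paper, so $Z_{i}|_F = (\epsilon^*Z_{\mathcal{F}_i} + E)|_F = 0$. Consequently the $r_i$ linearly independent hyperplane sections of $\mathbb{P}^{r_i-1}$ pull back under $\lambda_i|_F$ to $r_i$ sections of $M_i|_F = \lambda_i|_F^* \mathcal{O}(1)$ which remain linearly independent, because dividing by the section defining the fixed part on $F$ is injective and the $r_i$ original sections are already linearly independent in $H^0(F, D_{|_F})$. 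This yields
\[
h^0(F, M_i|_F) \geq r_i.
\]

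Finally, to replace $M_i|_F$ by $N_i|_F$, I would use the fact that since $F$ is a fiber of the smooth morphism $\hat{f}$ over a general (hence smooth) point of $C$, its normal bundle in $\hat{S}$ is trivial, so $\mathcal{O}_{\hat{S}}(F)_{|_F} \cong \mathcal{O}_F$. Thus $N_i|_F = (M_i - \mu_i F)|_F$ is $\mathbb{Q}$-linearly equivalent to the integral divisor $M_i|_F$ on $F$, and the two have equal $h^0$. Combining, $r_i \leq h^0(F, M_i|_F) = h^0(F, N_i|_F)$, which is the desired inequality.

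The main obstacle is the precise identification of $\lambda_i|_F$ in the second paragraph: one must verify that the resolution provided by $\epsilon$ in Proposition \ref{proposition3.3} does not alter the general fiber geometrically and does not diminish the relevant linear system, and that the injection $\mathcal{F}_i \otimes k(c) \hookrightarrow H^0(F, M_i|_F)$ indeed holds (as opposed to merely an injection into $H^0(F, D_{|_F})$, from which one must factor out the fixed part). Once this identification is established cleanly, the numerical conclusion is essentially immediate.
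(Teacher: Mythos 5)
Your route is genuinely different from the paper's: the paper argues upstairs, showing via the projection formula that $\hat{f}_{*}\mathcal{O}_{\Hat{S}}(M_{i}) = (\pi_{i}\circ\lambda_{i})_{*}M_{i} \supseteq \mathcal{F}_{i}$ and then comparing the rank $r_{i}$ with $h^{0}(F,{M_{i}}_{|_{F}})$, whereas you work fiberwise with the linear subsystem that $\mathcal{F}_{i}\otimes k(c)$ cuts out on a general fiber. The fiberwise strategy is perfectly viable, and your base-change step and the final reduction $h^{0}(F,{N_{i}}_{|_{F}})=h^{0}(F,{M_{i}}_{|_{F}})$ via $\mathcal{O}_{\Hat{S}}(F)_{|_{F}}\cong\mathcal{O}_{F}$ are fine.

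There is, however, one incorrect step: the claim that $Z_{\mathcal{F}_{i}}$ has no horizontal components ``by property (4)'', hence ${Z_{i}}_{|_{F}}=0$. Property (4) is asserted only for the fixed part of the complete system $|D+f^{*}A|$ attached to the full sheaf $f_{*}\mathcal{O}_{S}(D)$; for a proper subsheaf $\mathcal{F}_{i}$, twisting by a divisor pulled back from $C$ can never remove the fixed divisor that the proper subspace $\mathcal{F}_{i}\otimes k(c)\subseteq H^{0}(F,D_{|_{F}})$ has on the fiber, so $Z_{\mathcal{F}_{i}}$ typically does have horizontal components. Indeed, if ${Z_{i}}_{|_{F}}$ were always zero one would get $d_{i}=D.F$ for every $i$, which would trivialize Proposition \ref{positivity of degree} and erase the distinction between the ${N_{i}}_{|_{F}}$ and $D_{|_{F}}$ on which Theorem \ref{3.10} rests. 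The repair is exactly the other half of your own sentence: every section in the image of $\mathcal{F}_{i}\otimes k(c)$ vanishes along ${Z_{i}}_{|_{F}}$ (this divisor is the fixed part of the restricted subsystem), so division by the canonical section of $\mathcal{O}_{F}({Z_{i}}_{|_{F}})$ embeds that $r_{i}$-dimensional space into $H^{0}(F,{M_{i}}_{|_{F}})$, giving $r_{i}\leq h^{0}(F,{M_{i}}_{|_{F}})=h^{0}(F,{N_{i}}_{|_{F}})$ without any vanishing assumption on ${Z_{i}}_{|_{F}}$. With that substitution your argument closes and stands as a legitimate alternative to the paper's pushforward proof.
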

\begin{proof}
Let $\pi_{i}$ be the projection from $\mathbb{P}_{C}(\mathcal{F}_{i})$ to $C$. Then,
\[
(\pi_{i} \circ \lambda_{i})_{*}(M_{i}) = (\pi_{i})_{*}( (\lambda_{i})_{*}M_{i})
\]
\[
= (\pi_{i})_{*}(\mathcal{O}_{\mathbb{P}_{C}(\mathcal{F}_{i})}(1) \otimes (\lambda_{i})_{*} \mathcal{O}_{\Hat{S}}) \supseteq (\pi_{i})_{*}(\mathcal{O}_{\mathbb{P}_{C}(\mathcal{F}_{i})}(1)) = \mathcal{F}_{i} \text{.}
\]
Thus,
\[
r_{i} \leq h^{0}(F, {M_{i}}_{|_{F}}) \text{,}
\]
which implies
\[
r_{i} \leq h^{0}(F, {N_{i}}_{|_{F}})
.\]
\end{proof}
\begin{proposition}\label{positivity of degree}
Let $d_{i} = \deg({N_{i}}_{|_{F}}) = N_{i} . F$ such that $ 1 \leq i \leq k$. Then, \[
d_{k} \geq d_{k-1}\geq ... \geq d_{1} \geq 0
.\]
\end{proposition}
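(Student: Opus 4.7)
The plan is to deduce both inequalities from two inputs: the nefness of the Miyaoka divisors $N_i$ established in Lemma \ref{Miyaoka:lemma}, and the fact that the fixed parts $Z_{\mathcal{F}_i}$ decrease along the Harder-Narasimhan filtration.

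For the lower bound $d_1 \geq 0$, I would appeal directly to Lemma \ref{Miyaoka:lemma}: each $N_i$ is nef on $\hat{S}$ and the general fiber $F$ is an irreducible curve, so $d_i = N_i \cdot F \geq 0$ for every $i$, and in particular $d_1 \geq 0$.

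For the monotonicity $d_i \leq d_{i+1}$, the key observation is that the inclusion $\mathcal{F}_i \subseteq \mathcal{F}_{i+1}$ induces an inclusion of linear sub-systems $L_{\mathcal{F}_i} \subseteq L_{\mathcal{F}_{i+1}}$ inside $|D + f^{*}A|$. Since the fixed part of a linear system is the componentwise gcd of its members, the fixed part of a larger system is smaller, so $Z_{\mathcal{F}_i} - Z_{\mathcal{F}_{i+1}}$ is effective on $S$. Pulling back via $\epsilon$, and using $Z_j = \epsilon^{*}Z_{\mathcal{F}_j} + E$ for $j = i, i+1$, the divisor $Z_i - Z_{i+1} = \epsilon^{*}(Z_{\mathcal{F}_i} - Z_{\mathcal{F}_{i+1}})$ is effective on $\hat{S}$. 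Expanding the difference gives
\[
d_{i+1} - d_i = (N_{i+1} - N_i) \cdot F = (M_{i+1} - M_i) \cdot F + (\mu_i - \mu_{i+1}) F^{2},
\]
and since $F^{2} = 0$ (as $F$ is a fiber), only the first term survives. From the identity $\lambda_i^{*}\mathcal{O}_{\mathbb{P}_{C}(\mathcal{F}_i)}(1) \cong \epsilon^{*}\mathcal{O}(D - Z_{\mathcal{F}_i}) \otimes \mathcal{O}(-E)$ recorded in Proposition \ref{proposition3.3}, one has $M_j \sim \epsilon^{*}D - Z_j$, so $M_{i+1} - M_i = Z_i - Z_{i+1}$ is effective. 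Intersecting this effective divisor with the nef class $F$ yields a nonnegative number, giving $d_{i+1} \geq d_i$.

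The only delicate point I anticipate is the bookkeeping: making sure that the identifications $M_i \sim \epsilon^{*}D - Z_i$ and $Z_i - Z_{i+1} = \epsilon^{*}(Z_{\mathcal{F}_i} - Z_{\mathcal{F}_{i+1}})$ really hold as divisor classes on $\hat{S}$ so that the cancellation of the common exceptional part $E$ produces an honestly effective pullback. Both are immediate from the formula in Proposition \ref{proposition3.3} together with the definition of $Z_i$ in Definition \ref{miyaoka}, so no genuine obstruction appears.
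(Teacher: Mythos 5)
Your proof is correct and follows essentially the same route as the paper: the monotonicity $d_{i+1}\geq d_i$ comes from $F^{2}=0$, the identity $M_i=\epsilon^{*}D-Z_i$ from Proposition \ref{proposition3.3}, and the effectivity of $Z_{\mathcal{F}_i}-Z_{\mathcal{F}_{i+1}}$, which is exactly the paper's argument. The only cosmetic difference is that for the lower bound you invoke the nefness of the $N_i$ from Lemma \ref{Miyaoka:lemma}, whereas the paper computes directly that $d_i=(D-Z_{\mathcal{F}_i})\cdot F\geq 0$; both justifications are valid.
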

\begin{proof}
Since $F$ is a fiber, $F^{2} = 0$. Then,
\[d_{i} = N_{i} . F = (M_{i} - \mu_{i} F) . F = M_{i}.F
\]
\[
= \lambda_{i}^{*}(\mathcal{O}_{\mathbb{P}_{C}(\mathcal{F}_{i})}(1)) . F = (\epsilon^{*}(D - Z_{\mathcal{F}_{i}}) - E).F
\]
\[
= \epsilon^{*}(D - Z_{\mathcal{F}_{i}}) . F = (D - Z_{\mathcal{F}_{i}}) . F \geq 0
.\]
But $$Z_{\mathcal{F}_{i}} \geq Z_{\mathcal{F}_{i + 1 }}\text{.}$$ Thus, $$D - Z_{\mathcal{F}_{i
+ 1}} = D -Z_{\mathcal{F}_{i}} + (Z_{\mathcal{F}_{i}} - Z_{\mathcal{F}_{i + 1}}) \text{.}$$
Therefore, 
\[ d_{i + 1} \geq d_{i} \text{.}
\]
\end{proof}
\begin{proposition}\label{specialandnonspecial}
Continuing with the setting as above, we define the following constant for the data $(D, \mathcal{F})$: \[  \Hat{n}_{(D,\mathcal{F})} := \left\{
    \begin{array}{ll}
\max\{i| \hspace{0.1cm} {N_{i}}_{|_{F}} \text{is special}\} 
\\ -\infty  \text{ otherwise} \text{.}
\end{array}
\right. 
\]
If $\Hat{n}_{(D, \mathcal{F})} \neq -\infty$, then ${N_{j}}_{|_{F}}$ is special for $1 \leq j \leq \Hat{n}_{(D, \mathcal{F})}$ and ${N_{j}}_{|_{F}}$ is nonspecial for $ \Hat{n}_{(D, \mathcal{F})} +1 \leq j \leq k$. Otherwise, ${N_{j}}_{|_{F}}$ is nonspecial, $\forall j; 1\leq j \leq k$.
\end{proposition}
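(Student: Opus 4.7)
The plan is to reduce to the following monotonicity claim: \emph{once $N_i|_F$ is nonspecial, so is $N_{i+1}|_F$}. Granting this, the set $\{j : N_j|_F \text{ is special}\}$ is downward closed in $\{1,\ldots,k\}$, so it is either empty (in which case $\Hat{n}_{(D,\mathcal{F})} = -\infty$ and every $N_j|_F$ is nonspecial) or equal to $\{1,\ldots,\Hat{n}_{(D,\mathcal{F})}\}$, which is exactly the statement.

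The first step is to compare $N_i|_F$ and $N_{i+1}|_F$ as divisors on the general fiber $F$ of $\Hat{f}$. Since $\mathcal{F}_i \subseteq \mathcal{F}_{i+1}$, the sub-linear systems satisfy $L_{\mathcal{F}_i} \subseteq L_{\mathcal{F}_{i+1}}$ in $|D + f^{*}A|$, so their fixed parts satisfy
\[
Z_{\mathcal{F}_i} \geq Z_{\mathcal{F}_{i+1}},
\]
as was already noted in the proof of Proposition \ref{positivity of degree}. For a general fiber $F$ the exceptional divisor $E$ of $\epsilon$ does not meet $F$ and $F \cdot F = 0$, so the identities $M_j = \epsilon^{*}(D - Z_{\mathcal{F}_j}) - E$ and $N_j = M_j - \mu_j F$ give
\[
N_{i+1}|_F - N_i|_F \;=\; (Z_{\mathcal{F}_i} - Z_{\mathcal{F}_{i+1}})|_F \;\geq\; 0
\]
as divisors on $F$, since for generic $F$ no irreducible component of the effective divisor $Z_{\mathcal{F}_i} - Z_{\mathcal{F}_{i+1}}$ contains $F$ and hence the restriction remains effective.

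The second step transports nonspeciality along this inequality. Setting $E' := N_{i+1}|_F - N_i|_F \geq 0$, multiplication by a local equation of $E'$ produces an injection of invertible sheaves
\[
\mathcal{O}_F(K_F - N_{i+1}|_F) \hookrightarrow \mathcal{O}_F(K_F - N_i|_F),
\]
so $h^{0}(F, K_F - N_{i+1}|_F) \leq h^{0}(F, K_F - N_i|_F)$. Serre duality then yields $h^{1}(F, N_{i+1}|_F) \leq h^{1}(F, N_i|_F)$; in particular, if $h^{1}(F, N_i|_F) = 0$, then $h^{1}(F, N_{i+1}|_F) = 0$, proving the monotonicity claim. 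Induction on $i$ then completes the proof.

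The argument is essentially formal once the comparison $N_i|_F \leq N_{i+1}|_F$ is established, so the only real point of care is justifying that the restriction $(Z_{\mathcal{F}_i} - Z_{\mathcal{F}_{i+1}})|_F$ is a well-defined effective divisor on $F$; this is automatic for a sufficiently general fiber of $\Hat{f}$, which avoids all vertical components of $Z_{\mathcal{F}_i} - Z_{\mathcal{F}_{i+1}}$.
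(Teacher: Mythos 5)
Your proposal is correct and follows essentially the same route as the paper: both hinge on the monotonicity $Z_{\mathcal{F}_i} \geq Z_{\mathcal{F}_{i+1}}$ of fixed parts, giving $N_i|_F \leq N_{i+1}|_F$ on the general fiber, and then the fact that adding an effective divisor cannot create speciality. The only cosmetic difference is that you transport $h^1$-vanishing via Serre duality and the injection $\mathcal{O}_F(K_F - N_{i+1}|_F) \hookrightarrow \mathcal{O}_F(K_F - N_i|_F)$, whereas the paper uses the long exact cohomology sequence of $0 \to N_i|_F \to N_{i+1}|_F \to Q \to 0$; these are dual formulations of the same argument.
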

\begin{proof}
Recall that \[Z_{\mathcal{F}_{1}} \geq ... \geq Z_{\mathcal{F}_{k}}.\]
Also, we identified the general fiber of $S$ with the general fiber of $\Hat{S}$. Let $\Hat{n}_{(D, \mathcal{F})} + 1 \leq j \leq k -1$, thus
\[{N_{j}}_{|_{F}} = \epsilon^{*}(D - {Z_{\mathcal{F}_{j}})}_{|_{F}} = (D - {Z_{\mathcal{F}_{j}})}_{|_{F}} \leq (D - {Z_{\mathcal{F}_{j+1}})}_{|_{F}} = {N_{j+1}}_{|_{F}}.\]
Consider the following short exact sequence:
\[
0 \longrightarrow {N_{j}}_{|_F} \longrightarrow {N_{j+1}}_{|_F} \longrightarrow {N_{j+1}}_{|_F}/{N_{j}}_{|_{F}}  \longrightarrow 0
\]
which induces a long exact sequence in cohomology:
\[
\dots \longrightarrow H^{0}(F, {N_{j+1}}_{|_{F}}/{N_{j}}_{|_{F}}) \longrightarrow H^{1}(F, {N_{j}}_{|_{F}}) \longrightarrow H^{1}(F, {N_{j+1}}_{|_{F}})   \longrightarrow 0.
\]
So, if $h^{1}(F, {N_{j}}_{|_{F}} ) = 0$, then $h^{1}(F, {N_{j+1}}_{|_{F}} ) = 0$.
\end{proof}
\par To illustrate the above proposition, we consider the following examples.
\begin{example}
    Let $D = K_{S/C}, g \geq 1$, and $\mathcal{F}= f_{*}\omega_{S/C}$. Then we have ${N_{k}}_{|_{F}} \simeq K_{F} $, thus $H^{1}(F, K_{F}) = 1$. Hence, ${N_{k}}_{|_{F}}$ is special. Furthermore, it follows that ${N_{i}}_{|_{F}}$ is special $ \forall i; 1 \leq i \leq k$ and $\Hat{n}_{(K_{S/C},  f_{*}\omega_{S/C})}= k$. Conversely, for an arbitrary relatively effective divisor $D$, if $\Hat{n}_{(D, \mathcal{F})} = k$, then $\deg D_{|_{F}} \leq 2g-2$. 
\end{example}
\begin{example}\label{relativeexamplee}  Now, let $D = K_{S/C} + \Delta, g \geq 1$, $\mathcal{F}= f_{*}\mathcal{O}_{S}(K_{S/C} + \Delta)$, and $\Delta$ is an effective divisor on $S$ with $\Delta.F > 0$. Then ${N_{k}}_{|_{F}} \simeq K_{F} + {\Delta}_{|_{F}} $, thus $H^{1}(F, K_{F} + {\Delta}_{|_{F}}) = 0$ since the degree of $K_{F} + {\Delta}_{|_{F}}$ is strictly  greater than $2g - 2$. So we conclude that $\Hat{n}_{(K_{S/C} + \Delta, f_{*}\mathcal{O}_{S}(K_{S/C} + \Delta))} < k$.
\end{example}
\begin{example}
    As in  Example \ref{relativeexamplee}, let $D = K_{S/C} + \Delta$ and $\Delta$ is an effective divisor on $S$ with $\Delta.F > 0$. Assume that $f_{*}\mathcal{O}_{S}(D)$ is semi-stable. Then $k= 1$ and $\Hat{n}_{(K_{S/C} + \Delta, f_{*}\mathcal{O}_{S}(K_{S/C} + \Delta))} = -\infty$.
\end{example}
\begin{example}\label{ruledsurfaceexample}
        We give another example such that $\Hat{n}_{(D, \mathcal{F})} = -\infty$. Let $\mathcal{E} = \mathcal{O}_{\mathbb{CP}^{1}} \oplus \mathcal{O}_{\mathbb{CP}^{1}}(1)$ and $S = \mathbb{P}_{\mathbb{CP}^{1}}(\mathcal{E})$. Then we define a natural fibration $f : S \to \mathbb{CP}^{1}$. $S$ has only one negative curve $E \simeq \mathbb{CP}^{1}$ such that $E^{2} = -1$ and $E = \mathcal{O}_{S}(1) - F$ in $Pic(S)$. By construction, the trivial part $\mathcal{O}_{\mathbb{CP}^{1}}$ corresponds to a $0$-dimensional linear sub-system $L_{0}$ of $|\mathcal{O}_{S}(1)|$ generated by the effective divisor $E + F$, and the bundle $\mathcal{O}_{\mathbb{CP}^{1}}(1)$ to the $1$-dimensional linear sub-system $L_{1}$ generated by the  two effective divisor of $|\mathcal{O}_{S}(1)|$ different from $E + F$.
        Moreover, $$0 \subsetneq \mathcal{O}_{\mathbb{CP}^{1}}(1) \subsetneq \mathcal{E}$$is the Harder-Narasimhan filtration of $\mathcal{E}$. Since $L_{1}$ has no fixed part, then $N_{1} = \mathcal{O}_{S}(1) - F$ and $N_{2} = \mathcal{O}_{S}(1)$. Thus, ${N_{1}}_{|_{F}}$ and ${N_{2}}_{|_{F}}$ are nonspecial divisors and $\Hat{n}_{(\mathcal{O}_{1}(S), \mathcal{E})} = -\infty \text{.}$
\end{example}
\begin{example}
    More generally than Example \ref{ruledsurfaceexample}, every fibred surface $f : S \to C$  with $g(F) = 0$ has $\Hat{n}_{(D, \mathcal{F})} = -\infty$ for every relatively effective divisor $D$ and $\mathcal{F} \subseteq f_{*}\mathcal{O}_{S}(D)$, since there are no special divisors on $\mathbb{CP}^{1}$.
\end{example}
\par Now, it is natural to ask for some information about the sequence $(\frac{d_{i}}{h^{0}(F, {N_{i}}_{|_{F}}) -
1})_{i \in \{1,...k \}}$. For instance, is it an increasing finite sequence? Is it decreasing? Is it bounded from
below by a strictly positive number?
\begin{lemma}
$\forall i; 2 \leq i \leq k$, we have $h^{0}(F, {N_{i}}_{|_{F}}) > 1$.
\end{lemma}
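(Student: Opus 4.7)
The plan is to reduce this immediately to Lemma \ref{3.6} combined with the strictness of the Harder--Narasimhan filtration. Lemma \ref{3.6} already supplies the inequality
\[
r_{i} \;\leq\; h^{0}(F, {N_{i}}_{|_{F}}),
\]
so it suffices to show $r_{i} \geq 2$ for every $i \geq 2$.

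First I would invoke Proposition \ref{hardernarasimhanfiltration}: the Harder--Narasimhan filtration consists of \emph{strict} inclusions of vector sub-bundles
\[
0 = \mathcal{F}_{0} \subsetneq \mathcal{F}_{1} \subsetneq \cdots \subsetneq \mathcal{F}_{k} = \mathcal{F}
\]
on the smooth curve $C$, so each quotient $\mathcal{F}_{i}/\mathcal{F}_{i-1}$ is a nonzero torsion-free, hence locally free, sheaf on $C$. Consequently $\operatorname{rk}(\mathcal{F}_{i}/\mathcal{F}_{i-1}) \geq 1$, which gives $r_{i} > r_{i-1}$ for $1 \leq i \leq k$. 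Combined with $r_{0} = 0$, an easy induction yields $r_{i} \geq i$. In particular, for $i \geq 2$ we have $r_{i} \geq 2$.

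Feeding this back into Lemma \ref{3.6} gives, for all $i$ with $2 \leq i \leq k$,
\[
h^{0}(F, {N_{i}}_{|_{F}}) \;\geq\; r_{i} \;\geq\; 2 \;>\; 1,
\]
which is exactly the assertion. I do not anticipate any real obstacle: the proof is a one-line consequence of Lemma \ref{3.6} together with the fact that the Harder--Narasimhan filtration is strictly increasing in rank. The only thing worth flagging is that we are implicitly using that the ambient sheaves $\mathcal{F}_{i}$ are locally free on the smooth curve $C$, so that the quotients are themselves locally free and the rank function is additive in short exact sequences; this is already part of the setup in Proposition \ref{hardernarasimhanfiltration}.
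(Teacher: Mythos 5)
Your proposal is correct and is essentially the paper's own argument: the paper also combines the bound $h^{0}(F,{N_{i}}_{|_{F}}) \geq \rk\mathcal{F}_{i}$ from Lemma \ref{3.6} with the strict increase of ranks along the Harder--Narasimhan filtration, so that $h^{0}(F,{N_{i}}_{|_{F}})=1$ could only occur for $i=1$. You have merely spelled out the rank count $r_{i}\geq i$ more explicitly than the paper does.
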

\begin{proof} We have $h^{0}(F, {N_{i}}_{|_{F}}) \geq \rk(\mathcal{F}_{i})$. So if
$h^{0}(F, {N_{i}}_{|_{F}}) = 1$, then the only possibility is $i = 1$. In this case, the degree is $d_{1} = g(F) - h^{1}(F, {N_{1}}_{|_{F}}).$
\end{proof}
\par In the following theorem, we  assume that $\rk \mathcal{F} \geq 2$.
\begin{theorem}\label{3.10}
Let $f : S \to C$ be a fibred surface with general fiber $F$, and $D$ a relatively effective  divisor on $S$.
Consider the Harder-Narasimhan filtration $(\mathcal{F}_{i})$ of $\mathcal{F} \subseteq f_{*}\mathcal{O}_{S}(D)$ such that $\rk \mathcal{F} \geq 2$. Define: 
\[  \Hat{s}_{(D, \mathcal{F})}:= \left\{
    \begin{array}{ll}
1 &  \mbox{if $h^{0}(F, {N_{1}}_{|_{F}}) > 1$}  
\\ 2 & \mbox{otherwise} 
\end{array}
\right. 
\]
and 
\[
\beta_{D}:=  1 + \frac{g(F)}{h^{0}(F, {D}_{|_{F}}) - 1} \text{.}
\]
Then, the following result hold:
\begin{itemize}
\item[(1).] If $\Hat{n}_{(D, \mathcal{F})} = -\infty$, then ${N_{i}}_{|_{F}}$ is nonspecial $\forall i; 1 \leq i \leq k$, and
\[ \beta_{D}
\leq
\frac{d_{k}}{h^{0}({N_{k}}_{|_{F}}) - 1} \leq ... \leq \frac{d_{i+1}}{h^{0}({N_{i+1}}_{|
_{F}}) - 1} \leq
\]
\[
\frac{d_{i}}{h^{0}({N_{i}}_{|_{F}}) - 1} \leq ... \leq \frac{d_{\Hat{s}_{(D, \mathcal{F})}}}{h^{0}({N_{\Hat{s}_{(D, \mathcal{F})}}}_{|
_{F}}) - 1}
.\]
\item[(2).] Otherwise: 
\begin{itemize}
    \item $\forall i; \Hat{n}_{(D, \mathcal{F})} + 1 \leq i \leq k $: 
   \[ \beta_{D}
\leq
\frac{d_{k}}{h^{0}({N_{k}}_{|_{F}}) - 1} \leq ... \leq \frac{d_{i+1}}{h^{0}({N_{i+1}}_{|
_{F}}) - 1} \leq
\]
\[
\frac{d_{i}}{h^{0}({N_{i}}_{|_{F}}) - 1} \leq ... \leq \frac{d_{\Hat{n}_{(D, \mathcal{F})} + 1}}{h^{0}({N_{\Hat{n}_{(D, \mathcal{F})} + 1}}_{|
_{F}}) - 1}
.\]
\item $\forall i; \Hat{s}_{(D, \mathcal{F})} \leq i \leq \Hat{n}_{(D, \mathcal{F})}$:
 \[\frac{d_{i}}{h^{0}({N_{i}}_{|_{F}}) - 1} \geq 2 \text{.}\]
 \end{itemize}
 \end{itemize}
\end{theorem}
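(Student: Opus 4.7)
The plan is to prove the three separate assertions using exactly three standard tools restricted to the general fiber $F$: Clifford's theorem (for the special range), Riemann--Roch (for the nonspecial range), and the monotonicity of the one-variable function $\phi(t) := t/(t-g) = 1 + g/(t-g)$, which is strictly decreasing on $(g(F), \infty)$. Throughout, I will use $d_i = N_i.F$ together with the fact from Proposition \ref{positivity of degree} that $d_1 \leq d_2 \leq \cdots \leq d_k$, and the fact from Lemma \ref{3.6} that $h^0(F, N_i|_F) \geq r_i \geq 1$, so each $N_i|_F$ is effective.

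\textbf{Step 1 (The special range, lower bound $2$).} Assume $\hat{s}_{(D,\mathcal{F})} \leq i \leq \hat{n}_{(D,\mathcal{F})}$. By definition of $\hat{n}_{(D,\mathcal{F})}$, the divisor $N_i|_F$ is special. Moreover, $h^0(F, N_i|_F) > 1$: for $i \geq 2$ this is Lemma \ref{3.9}, while for $i = 1$ it is built into the case $\hat{s}_{(D,\mathcal{F})} = 1$. Since $N_i|_F$ is effective, Clifford's theorem gives $h^0(F, N_i|_F) \leq d_i/2 + 1$, which rearranges to $d_i/(h^0(F, N_i|_F) - 1) \geq 2$. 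This yields (2b).

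\textbf{Step 2 (Monotonicity in the nonspecial range).} For any $i$ with $N_i|_F$ nonspecial, Riemann--Roch gives $h^0(F, N_i|_F) = d_i - g(F) + 1$, so
\[
\frac{d_i}{h^0(F, N_i|_F) - 1} \;=\; \frac{d_i}{d_i - g(F)} \;=\; \phi(d_i).
\]
Here $d_i > g(F)$ because $h^0(F, N_i|_F) > 1$ (by Lemma \ref{3.9} for $i \geq 2$, or by $\hat{s}_{(D,\mathcal{F})} = 1$ for $i = 1$). Since $\phi$ is strictly decreasing on $(g(F), \infty)$ and $d_i \leq d_{i+1}$ by Proposition \ref{positivity of degree}, the sequence $\phi(d_i)$ is nonincreasing as $i$ grows. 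This delivers the decreasing chains in (1) and in the first bullet of (2).

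\textbf{Step 3 (Lower bound by $\beta_D$).} For the bound $\beta_D \leq d_k/(h^0(F,N_k|_F) - 1)$, I first transfer the nonspeciality of $N_k|_F$ to $D|_F$. Since $Z_{\mathcal{F}_k}$ is effective and has no horizontal components, $N_k|_F \leq D|_F$ as divisors on $F$; from the short exact sequence
\[
0 \longrightarrow \mathcal{O}_F(N_k|_F) \longrightarrow \mathcal{O}_F(D|_F) \longrightarrow \mathcal{O}_F(D|_F)/\mathcal{O}_F(N_k|_F) \longrightarrow 0,
\]
whose rightmost term is a torsion sheaf on the curve $F$, the induced long exact sequence shows $H^1(F, N_k|_F) \twoheadrightarrow H^1(F, D|_F)$; hence $h^1(F, D|_F) = 0$ whenever $h^1(F, N_k|_F) = 0$. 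Riemann--Roch then gives $h^0(F, D|_F) = D.F - g(F) + 1$, i.e.\ $\beta_D = \phi(D.F)$. Combining this with $d_k = (D - Z_{\mathcal{F}_k}).F \leq D.F$ and the monotonicity of $\phi$ from Step 2 yields $\phi(d_k) \geq \phi(D.F) = \beta_D$, completing both the left ends of the chains in (1) and the first bullet of (2).

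\textbf{Main obstacle.} The mathematical content is elementary; the actual difficulty is purely organizational, namely tracking the case split on $\hat{s}_{(D,\mathcal{F})}$ (which exists exactly to guarantee $h^0(F, N_i|_F) > 1$, so $\phi(d_i)$ is defined and the Clifford ratio is well-posed) and on $\hat{n}_{(D,\mathcal{F})}$ (which partitions the filtration into a Clifford-controlled special part and a Riemann--Roch-controlled nonspecial part). Once the three tools above are applied in the appropriate range, the stated chains of inequalities follow in a mechanical way.
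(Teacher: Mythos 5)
Your proof is correct and follows essentially the same route as the paper's own argument: Clifford's theorem on the special range, Riemann--Roch together with the monotonicity of $t \mapsto t/(t-g(F))$ (the paper phrases this as adding $d_{i+1}-d_i$ to numerator and denominator) on the nonspecial range, and transferring nonspeciality from ${N_k}_{|_F}$ to $D_{|_F}$ so that $\beta_D = D.F/(D.F - g(F))$ and the bound follows from $d_k \le D.F$. Two cosmetic remarks only: the fact $h^{0}(F,{N_i}_{|_F})>1$ for $i\ge 2$ is an unlabeled lemma in the paper, so your citation of it should be adjusted, and when $g(F)=0$ your function is constant rather than strictly decreasing, which is harmless since only non-strict inequalities are claimed.
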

\begin{proof} For $(1)$, if $\Hat{n}_{(D, \mathcal{F})}=-\infty$, then by definition of $\Hat{n}_{(D, \mathcal{F})}$,  ${N_{i}}_{|_{F}}$ is nonspecial $\forall i; 1 \leq i \leq k$. Thus, by applying the Riemann-Roch formula:
\[
h^{0}({N_{i}}_{|_{F}}) = d_{i} + 1 - g(F)
\text{.}\]
Since
\[
h^{0}({N_{i+1}}_{|_{F}}) = h^{0}({N_{i}}_{|_{F}}) + d_{i+1} - d_{i}\text{,} \hspace{0.3cm} 
\]
it follows that $\forall i; \Hat{s}_{(D, \mathcal{F})} \leq i \leq k-1$:
\[ \frac{d_{i+1}}{h^{0}({N_{i+1}}_{|_{F}}) - 1} = \frac{d_{i} + d_{i+1} - d_{i}}
{h^{0}({N_{i}}_{|_{F}}) + d_{i+1} - d_{i} - 1} \leq \frac{d_{i}}{h^{0}({N_{i}}_{|_{F}}) - 1}
\text{.}\]
Moreover, $D_{|_{F}}$ is nonspecial, and then by the Riemann-Roch formula we have
\[
h^{0}(F, D_{|_{F}}) = D.F + 1 - g(F)
\text{.}\]
Thus,
\[
h^{0}(F, D_{|_{F}}) = h^{0}(F, {N_{i}}_{|_{F}}) + D.F - d_{i}\text{,}
\hspace{0.2cm} \forall i; 1 \leq i \leq k \text{.} \]
Then, we deduce the desired lower bound:
\[
\frac{d_{i}}{h^{0}({N_{i}}_{|_{F}}) - 1} \geq 1 + \frac{g(F)}{h^{0}(F, {D}_{|_{F}}) - 1} \text{,} \hspace{0.2cm} \forall i; \Hat{s}_{(D,\mathcal{F})} \leq i \leq k
.\]
This proves $(1)$.
\vspace{0.2cm}
\\For $(2)$, if $\Hat{n}_{(D, \mathcal{F})} + 1 \leq i \leq k$, then argue as in $(1)$.  Suppose now that 
 ${N_{i}}_{|_{F}}$ is special, that is $\Hat{s}_{(D,\mathcal{F})} \leq i \leq \Hat{n}$. Then by Clifford Theorem \cite{ACGH}, we have
\[
d_{i} \geq 2 (h^{0}({N_{i}}_{|_{F}}) - 1)
\]
which implies
\[
\frac{d_{i}}{h^{0}({N_{i}}_{|_{F}}) - 1} \geq 2
\text{.}\]
\end{proof}
\begin{remark}\label{degreeremark}
    If we compare  Proposition \ref{positivity of degree} and  Theorem \ref{3.10}, then we deduce that $\forall i; \Hat{s}_{(D, \mathcal{F})} \leq i \leq k$:
    \[
    d_{k} \geq d_{k-1}\geq ... \geq d_{\Hat{s}} \geq 1 \text{.}
    \]
\end{remark}
\section{Slope inequalities}
\par Now, we are ready to present the technical lemma in the method, we called it the \emph{Modified Xiao Lemma}. Note that it is a more general form of Xiao \cite[Lemma 2]{xiaogang}.

\begin{lemma}
[\emph{Modified Xiao Lemma}]\label{4.1}
Let $\Hat{f} : \Hat{S} \to C$ be a fibred surface with $F$ its general fiber, $\Hat{D}$ be a divisor on $\Hat{S}$, and suppose that there
exist a sequence of effective
divisors:
\[
\mathcal{Z}_{1} \geq \mathcal{Z}_{2} \geq ... \geq \mathcal{Z}_{j}
\text{,}\]
and a sequence of rational numbers:
\[
\mu_{1} \geq \mu_{2} \geq ... \geq \mu_{j}
\text{,}\]
such that for every $i \in \{1,...,j\}$, we have $$\mathcal{N}_{i} : = \hat{D} - \mathcal{Z}_{i} - \mu_{i}F$$
are nef $\Q$-divisors. Then,
\[
\Hat{D}^{2} \geq \sum_{i = 1}^{j-1} (d_{i} + d_{i+1})(\mu_{i} - \mu_{i+1}) + 2 \Hat{D}.\mathcal{Z}_{j} -
\mathcal{Z}_{j}^{2} + 2\mu_{j}d_{j} \text{,}
\]
where $d_{i} = \mathcal{N}_{i}.F$.
\end{lemma}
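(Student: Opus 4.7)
The plan is to reduce everything to a clean telescoping calculation involving $\mathcal{N}_i^2$, where the two inputs are: $\mathcal{N}_i$ is nef, and $\mathcal{N}_{i+1} - \mathcal{N}_i = (\mathcal{Z}_i - \mathcal{Z}_{i+1}) + (\mu_i - \mu_{i+1})F$ is an effective $\mathbb{Q}$-divisor. First I would rewrite $\hat{D}$ using the defining relation $\hat{D} = \mathcal{N}_j + \mathcal{Z}_j + \mu_j F$ at the last index, and expand
\[
\hat{D}^2 = \hat{D}.\mathcal{N}_j + \hat{D}.\mathcal{Z}_j + \mu_j \hat{D}.F.
\]
Computing each term (using $F^2=0$, $\hat{D}.F = d_j + \mathcal{Z}_j.F$, and $\hat{D}.\mathcal{Z}_j = \mathcal{N}_j.\mathcal{Z}_j + \mathcal{Z}_j^2 + \mu_j F.\mathcal{Z}_j$), the cross-terms collapse to yield the clean identity
\[
\hat{D}^2 \;=\; \mathcal{N}_j^2 \,+\, 2\hat{D}.\mathcal{Z}_j \,-\, \mathcal{Z}_j^2 \,+\, 2\mu_j d_j.
\]
So it suffices to prove $\mathcal{N}_j^2 \geq \sum_{i=1}^{j-1}(\mu_i-\mu_{i+1})(d_i+d_{i+1})$.

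For this, the key step will be a telescoping via the intermediate intersection $\mathcal{N}_i.\mathcal{N}_{i+1}$. Using the identity $\mathcal{N}_{i+1} = \mathcal{N}_i + (\mathcal{Z}_i - \mathcal{Z}_{i+1}) + (\mu_i - \mu_{i+1})F$, I would compute in two symmetric ways:
\[
\mathcal{N}_i.\mathcal{N}_{i+1} - \mathcal{N}_i^2 \;=\; \mathcal{N}_i.(\mathcal{Z}_i - \mathcal{Z}_{i+1}) + (\mu_i - \mu_{i+1}) d_i,
\]
\[
\mathcal{N}_{i+1}^2 - \mathcal{N}_i.\mathcal{N}_{i+1} \;=\; \mathcal{N}_{i+1}.(\mathcal{Z}_i - \mathcal{Z}_{i+1}) + (\mu_i - \mu_{i+1}) d_{i+1}.
\]
Because $\mathcal{N}_i$ and $\mathcal{N}_{i+1}$ are nef and $\mathcal{Z}_i - \mathcal{Z}_{i+1}$ is effective, both $\mathcal{N}_i.(\mathcal{Z}_i - \mathcal{Z}_{i+1})$ and $\mathcal{N}_{i+1}.(\mathcal{Z}_i - \mathcal{Z}_{i+1})$ are $\geq 0$. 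Adding the two equations then gives the telescoping increment
\[
\mathcal{N}_{i+1}^2 - \mathcal{N}_i^2 \;\geq\; (\mu_i - \mu_{i+1})(d_i + d_{i+1}).
\]

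Summing this over $i=1,\dots,j-1$ yields $\mathcal{N}_j^2 - \mathcal{N}_1^2 \geq \sum_{i=1}^{j-1}(\mu_i - \mu_{i+1})(d_i + d_{i+1})$, and since $\mathcal{N}_1$ is nef we have $\mathcal{N}_1^2 \geq 0$. Substituting into the identity for $\hat{D}^2$ derived in the first step delivers the stated inequality. The main subtlety—more a bookkeeping issue than a real obstacle—is making sure the cross-terms involving $\mathcal{Z}_j.F$ and $\mu_j F.\mathcal{Z}_j$ really cancel into the clean form $2\hat{D}.\mathcal{Z}_j - \mathcal{Z}_j^2$, which hinges crucially on $F^2=0$; and verifying that nothing prevents the argument in the $\mathbb{Q}$-divisor setting, where the $\mu_i$ are only rational. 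Both points are routine once one unwinds the definitions.
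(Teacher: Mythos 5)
Your proposal is correct and follows essentially the same route as the paper: the identity $\Hat{D}^{2} = \mathcal{N}_{j}^{2} + 2\Hat{D}.\mathcal{Z}_{j} - \mathcal{Z}_{j}^{2} + 2\mu_{j}d_{j}$ is exactly the paper's expansion of $(\Hat{D} - \mathcal{Z}_{j} - \mu_{j}F)^{2}$, and your telescoping bound $\mathcal{N}_{i+1}^{2} - \mathcal{N}_{i}^{2} \geq (\mu_{i}-\mu_{i+1})(d_{i}+d_{i+1})$, obtained by dropping the nonnegative terms $\mathcal{N}_{i}.(\mathcal{Z}_{i}-\mathcal{Z}_{i+1})$ and $\mathcal{N}_{i+1}.(\mathcal{Z}_{i}-\mathcal{Z}_{i+1})$ together with $\mathcal{N}_{1}^{2}\geq 0$, is the paper's induction written symmetrically via $\mathcal{N}_{i}.\mathcal{N}_{i+1}$. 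The only difference is bookkeeping, not substance.
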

\begin{proof} First, observe that $\mathcal{N}_{1}^{2} \geq 0$ by nefness. However,
\[
\mathcal{N}_{i}^{2} = \mathcal{N}_{i}(\mathcal{N}_{i-1} + (\mathcal{Z}_{i-1} -\mathcal{Z}_{i}) + (\mu_{i-1} - \mu_{i})F)
\]
\[
\geq \mathcal{N}_{i}(\mathcal{N}_{i-1} + (\mu_{i-1} - \mu_{i})F )
\]
\[
\geq
(\mathcal{N}_{i-1} + (\mathcal{Z}_{i-1} -\mathcal{Z}_{i}) + (\mu_{i-1} - \mu_{i})F)(\mathcal{N}_{i-1} +
(\mu_{i-1} - \mu_{i})F )
\]
\[
\geq \mathcal{N}_{i-1}^{2} + (\mu_{i-1} - \mu_{i})(2\mathcal{N}_{i-1}F +(\mathcal{Z}_{i-1} -
\mathcal{Z}_{i})F )
\]
\[ =
\mathcal{N}_{i-1}^{2} + (\mu_{i-1} - \mu_{i})(d_{i-1} + d_{i}) \text{.}
\]
So, by induction we have
\[
\mathcal{N}_{j}^{2} \geq \sum_{i = 1}^{j-1} (d_{i} + d_{i+1})(\mu_{i} - \mu_{i+1}) \text{.}
\]
Hence,
\[
(\Hat{D} - \mathcal{Z}_{j} - \mu_{j}F)^{2} \geq \sum_{i = 1}^{j-1} (d_{i} + d_{i+1})(\mu_{i} - \mu_{i+1}) \text{.}
\]
But,
\[
(\Hat{D} - \mathcal{Z}_{j} - \mu_{j}F)^{2} = (\Hat{D} - \mathcal{Z}_{j})^{2} - 2\mu_{j}(\Hat{D} - \mathcal{Z}_{j})F
\]
\[
= \Hat{D}^{2} - 2\Hat{D}.\mathcal{Z}_{j} + \mathcal{Z}_{j}^{2} - 2\mu_{j}d_{j} \text{.}
\]
Thus,
\[
\Hat{D}^{2} \geq \sum_{i = 1}^{j-1} (d_{i} + d_{i+1})(\mu_{i} - \mu_{i+1}) + 2 \Hat{D}.\mathcal{Z}_{j} -
\mathcal{Z}_{j}^{2} + 2\mu_{j}d_{j} \text{.}
\]
\end{proof}
\begin{remark}
    The term $2 \Hat{D}.\mathcal{Z}_{j} - \mathcal{Z}_{j}^{2} + 2\mu_{j}d_{j}$ describes the negativity of $\Hat{D}$.
\end{remark}
\begin{example}\label{4.2}
In the setting of Lemma \ref{4.1}, suppose that $j = k $, $\Hat{D}$ is nef, and $\mu_{k} \geq 0$. Set $\mathcal{Z}_{k+1} = 0$ and $\mu_{k + 1} = 0$. Then apply  Lemma \ref{4.1} to the sequence of effective divisors: 
\[
\mathcal{Z}_{1} \geq \mathcal{Z}_{2} \geq ... \geq \mathcal{Z}_{k+1} = 0
\text{,}\]
and a sequence of rational numbers:
\[
\mu_{1} \geq \mu_{2} \geq ... \geq \mu_{k+1}=0
\text{.}\]
So, we conclude the original result of Xiao \cite[Lemma 2]{xiaogang}:
\[
\Hat{D}^{2} \geq \sum_{i = 1}^{k} (d_{i} + d_{i+1})(\mu_{i} - \mu_{i+1})
.\]
\end{example}
Prior to stating the main results, we apply  Lemma \ref{4.1} to the data $(D, \mathcal{F})$ where $\mathcal{F} \subseteq f_{*}\mathcal{O}_{S}(D)$. Let  $(\mathcal{F}_{i})_{0 \leq i \leq k}$ be the Harder-Narasimhan filtration of $\mathcal{F}$, and $(Z_{i}, M_{i}, N_{i})_{1 \leq i \leq k}$ be the triple of fixed parts, moving parts, and the Miyaoka divisors respectively (Definition \ref{miyaoka}). Then, we deduce the following inequalities.
\begin{theorem}\label{deductiontheorem}
    Let $f: S \to C$ be a fibred surface with general fiber $F$. Consider the data  $(D, \mathcal{F})$ where $D$ is a relatively effective divisor on $S$ and $\mathcal{F} \subseteq f_{*}\mathcal{O}_{S}(D)$ is a locally free sub-sheaf on $C$ with $\rk \mathcal{F}\geq 2$. Then we have the following three cases:
    \begin{itemize}
        \item[(1).] If $\Hat{n}_{(D, \mathcal{F})} = -\infty$, then
        \[D^{2} \geq 2\beta_{D} \deg \mathcal{F} - \beta_{D}\mu_{1}
+ (\beta_{D} - 2\beta_{D}r_{k} + 2d_{k})\mu_{k} + 2 \epsilon^{*}D.Z_{k} - Z_{k}^{2} \text{.}
\]
        \item[(2).] If $\Hat{n}_{(D, \mathcal{F})} = k$, then
        \[D^{2} \geq 4 \deg \mathcal{F} - 2\mu_{1}
+ 2(1 - 2r_{k} + d_{k})\mu_{k} + 2 \epsilon^{*}D.Z_{k} - Z_{k}^{2} \text{.}
\]
        \item[(3).] If $1 \leq \Hat{n}_{(D, \mathcal{F})} < k$, then
        \[
        D^{2} \geq 4\sum_{i = 1}^{\Hat{n}_{(D, \mathcal{F})}-1} r_{i}(\mu_{i} - \mu_{i+1}) + 2\beta_{D} \sum_{i = \Hat{n}_{(D, \mathcal{F})} + 1}^{k-1} r_{i}(\mu_{i} - \mu_{i+1})
        \]
        \[
        -2(\mu_{1} - \mu_{\Hat{n}_{(D, \mathcal{F})} +1}) - \beta_{D}(\mu_{\Hat{n}_{(D, \mathcal{F})} +1} -\mu_{k}) + (\beta_{D}   + 2)r_{\Hat{n}_{(D, \mathcal{F})}}(\mu_{\Hat{n}_{(D, \mathcal{F})}} - \mu_{\Hat{n}_{(D, \mathcal{F})} + 1})
        \]
        \[
       + 2 \epsilon^{*}D.{Z}_{k} -Z_{k}^{2} + 2\mu_{k}d_{k} \text{.}
        \]  
Here $\epsilon$ is constructed as in  Proposition \ref{proposition3.3} and $\beta_{D}$ is as in  Theorem \ref{3.10}.
\end{itemize}
\end{theorem}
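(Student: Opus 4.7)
The plan is to apply the Modified Xiao Lemma (Lemma \ref{4.1}) on the iterated blow-up $\epsilon : \Hat{S} \to S$ of Proposition \ref{proposition3.3}, taking $\Hat{D} = \epsilon^{*}D$ (so $\Hat{D}^{2} = D^{2}$), the chain of fixed parts $Z_{1} \geq Z_{2} \geq \dots \geq Z_{k}$ on $\Hat{S}$, the slopes $\mu_{1} > \mu_{2} > \dots > \mu_{k}$ of the Harder-Narasimhan filtration of $\mathcal{F}$, and the Miyaoka divisors $\mathcal{N}_{i} := N_{i} = \epsilon^{*}D - Z_{i} - \mu_{i}F$. These are nef by Lemma \ref{Miyaoka:lemma} and the monotonicity hypotheses of Lemma \ref{4.1} are satisfied. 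With $j = k$, the Modified Xiao Lemma yields the master inequality
\[
D^{2} \;\geq\; \sum_{i=1}^{k-1}(d_{i} + d_{i+1})(\mu_{i} - \mu_{i+1}) + 2\epsilon^{*}D \cdot Z_{k} - Z_{k}^{2} + 2\mu_{k} d_{k}.
\]

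Next, I would convert the rank-wise estimates of Theorem \ref{3.10}, combined with Lemma \ref{3.6} ($h^{0}(F, {N_{i}}_{|_{F}}) \geq r_{i}$), into the uniform bound $d_{i} \geq c_{i}(r_{i} - 1)$, where $c_{i} = 2$ when ${N_{i}}_{|_{F}}$ is special (Clifford) and $c_{i} = \beta_{D}$ when ${N_{i}}_{|_{F}}$ is nonspecial. The degenerate case $h^{0}(F, {N_{1}}_{|_{F}}) = 1$ forces $r_{1} = 1$, so the bound is automatic there; and by Proposition \ref{specialandnonspecial}, the index $\Hat{n}_{(D, \mathcal{F})}$ cleanly separates the special range $i \leq \Hat{n}_{(D,\mathcal{F})}$ from the nonspecial range $i > \Hat{n}_{(D,\mathcal{F})}$.

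The technical heart of the argument is the following substitution. Bounding $(d_{i} + d_{i+1}) \geq c_{i}(r_{i} - 1) + c_{i+1}(r_{i+1} - 1)$ and exploiting the integer inequality $r_{i+1} \geq r_{i} + 1$ (forced by the strictness of the Harder-Narasimhan filtration) gives
\[
(d_{i} + d_{i+1})(\mu_{i} - \mu_{i+1}) \;\geq\; \bigl[(c_{i} + c_{i+1})r_{i} - 2\bigr](\mu_{i} - \mu_{i+1}).
\]
The sums $\sum(\mu_{i} - \mu_{i+1})$ telescope to $\mu_{1} - \mu_{k}$, and Lemma \ref{degreelemma} rewrites $\sum_{i=1}^{k-1} r_{i}(\mu_{i} - \mu_{i+1}) = \deg \mathcal{F} - r_{k}\mu_{k}$. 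For case (1) I take $c_{i} = \beta_{D}$ throughout; for case (2) I take $c_{i} = 2$ throughout; for case (3) I split the sum at $\Hat{n}_{(D, \mathcal{F})}$ into a Clifford block ($i \leq \Hat{n}-1$, yielding $4\sum r_{i}(\mu_{i} - \mu_{i+1})$), a boundary term at $i = \Hat{n}$ (which, using $r_{\Hat{n}+1} \geq r_{\Hat{n}} + 1$ on the $\beta_{D}$ side, contributes $(\beta_{D} + 2)r_{\Hat{n}}(\mu_{\Hat{n}} - \mu_{\Hat{n}+1})$), and a nonspecial block ($i \geq \Hat{n}+1$, yielding $2\beta_{D}\sum r_{i}(\mu_{i} - \mu_{i+1})$). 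The term $2\mu_{k}d_{k}$ from the master inequality is carried unchanged, since the sign of $\mu_{k}$ is not controlled and replacing $d_{k}$ by its lower bound could weaken the estimate.

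The main obstacle is the bookkeeping in case (3): each use of $r_{i+1} \geq r_{i} + 1$ leaves a remainder $-2(\mu_{i} - \mu_{i+1})$, and these must be collected so that the Clifford block and the boundary together contribute $-2(\mu_{1} - \mu_{\Hat{n}+1})$ while the nonspecial block contributes $-\beta_{D}(\mu_{\Hat{n}+1} - \mu_{k})$, matching the stated form. The boundary itself is asymmetric, using Clifford on $d_{\Hat{n}}$ and $\beta_{D}$ on $d_{\Hat{n}+1}$, and the single application $r_{\Hat{n}+1} \geq r_{\Hat{n}} + 1$ there is precisely what produces the $(\beta_{D} + 2)r_{\Hat{n}}$ coefficient rather than a symmetric expression involving both $r_{\Hat{n}}$ and $r_{\Hat{n}+1}$.
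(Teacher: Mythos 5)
Your proposal is essentially the paper's own proof: the same master inequality obtained from the Modified Xiao Lemma (Lemma \ref{4.1}) applied to $\epsilon^{*}D$ with the chains $(Z_{i})$, $(\mu_{i})$ and the nef Miyaoka divisors, followed by the same per-index bounds $d_{i} \geq c_{i}(r_{i}-1)$ and $d_{i+1} \geq c_{i+1}(r_{i+1}-1) \geq c_{i+1}r_{i}$ with $c_{i}\in\{2,\beta_{D}\}$ according to speciality, telescoping, and Lemma \ref{degreelemma}; your treatment of the degenerate case $h^{0}(F,{N_{1}}_{|_{F}})=1$ and of the asymmetric boundary term at $i=\Hat{n}_{(D,\mathcal{F})}$ matches the paper. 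One correction: in your displayed substitution the leftover constant should be $-c_{i}$, not $-2$, since the chain only yields $(d_{i}+d_{i+1}) \geq (c_{i}+c_{i+1})r_{i}-c_{i}$ and $\beta_{D}$ may exceed $2$, so the claim with $-2$ is unjustified (and can fail) on nonspecial steps; the theorem indeed carries the weaker remainders $-\beta_{D}(\mu_{1}-\mu_{k})$ in case (1) and $-\beta_{D}(\mu_{\Hat{n}_{(D,\mathcal{F})}+1}-\mu_{k})$ for the nonspecial block in case (3), which your final bookkeeping paragraph already uses, so only the displayed formula needs fixing.
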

\begin{proof}
    Recall that $\Hat{f} = f \circ \epsilon : \Hat{S} \to C$ is a fibred surface with $F$ its general fiber and $\Hat{D}:= \epsilon^{*}D$ is a relatively effective divisor on $\Hat{S}$. Consider $(\mathcal{F}_{i})_{0 \leq i \leq k}$ the Harder-Narasimhan filtration of $\mathcal{F}$, let $(\mu_{1}, \dots, \mu_{k} )$ be the sequence of slopes and $(Z_{i}, M_{i}, N_{i})_{1 \leq i \leq k}$  the triple of fixed parts, moving parts, and the Miyaoka divisors respectively, recall also that $\hat{D} = Z_{i} + M_{i}, \forall i; 1 \leq i \leq k $, and 
\[
Z_{1} \geq Z_{2} \geq \dots \geq Z_{k} \geq 0,
\]
\[
\mu_{1} > \mu_{2} > \dots > \mu_{k}\text{.}
\]
Therefore, $N_{i}$ are $\Q$-nef divisors on $\Hat{S}$, where $d_{i} = N_{i}.F$. Now, by  Lemma \ref{4.1}, we have the following inequality: 
\begin{equation}\label{eq}\Hat{D}^{2} \geq \sum_{i = 1}^{k-1} (d_{i} + d_{i+1})(\mu_{i} - \mu_{i+1}) + 2 \Hat{D}.{Z}_{k} -Z_{k}^{2} + 2\mu_{k}d_{k} \text{.}  
\end{equation}
Recall that $\epsilon : \Hat{S} \to S$ is a proper birational morphism, hence $\Hat{D}^{2} = D^{2}$. By  Theorem \ref{3.10}, we have the following natural three cases:
\begin{itemize}
    \item[(1).] If $\Hat{n}_{(D, \mathcal{F})} = -\infty$, then ${N_{i}}_{|_{F}}$ is nonspecial $\forall i; 1 \leq i \leq k$, and
    \[
    d_{i} \geq \beta_{D}(h^{0}( {N_{i}}_{|_{F}} )-1) \geq \beta_{D}(r_{i}-1)\text{.}
    \]
    The right hand side inequality follows from  Lemma \ref{3.6}, where $r_{i} = \rk(\mathcal{F}_{i}), \forall i; 1 \leq i \leq k$, we also recall that $r_{i+1} \geq r_{i} - 1, \forall i; 1 \leq i \leq k-1$, hence
    \[
    d_{i+1} \geq \beta_{D}r_{i} \text{.}
    \]
     We substitule all these information into \eqref{eq}: 
\[
D^{2} \geq 2 \beta_{D} \sum_{i = 1}^{k-1} r_{i}(\mu_{i} - \mu_{i+1})
- \beta_{D}(\mu_{1} - \mu_{k})  + 2\epsilon^{*}D.Z_{k} - Z_{k}^{2} + 2\mu_{k}d_{k}\text{.}
\]
By Lemma \ref{degreelemma}, we deduce the desired inequality:
\[D^{2} \geq 2\beta_{D} \deg \mathcal{F} - \beta_{D}\mu_{1}
+ (\beta_{D} - 2\beta_{D}r_{k} + 2d_{k})\mu_{k} + 2 \epsilon^{*}D.Z_{k} - Z_{k}^{2} \text{.}
\]
 \item[(2).] If $\Hat{n}_{(D, \mathcal{F})} = k$, then ${N_{i}}_{|_{F}}$ is special $\forall i; 1 \leq i \leq k$ and
  \[
    d_{i} \geq 2(h^{0}( {N_{i}}_{|_{F}} )-1) \geq 2(r_{i}-1)
    \text{,}\]
    however
    \[
    d_{i+1} \geq 2r_{i} \text{.}
    \]
This implies
\[D^{2} \geq 4 \deg \mathcal{F} - 2\mu_{1}
+ 2(1 - 2r_{k} + d_{k})\mu_{k} + 2 \epsilon^{*}D.Z_{k} - Z_{k}^{2} \text{.}
\]
\item[(3).] If $1 \leq \Hat{n}_{(D, \mathcal{F})} < k$, then 
\[
\forall i; \Hat{n}_{(D, \mathcal{F})} + 1 \leq i \leq k,\hspace{0.4cm} d_{i} \geq \beta_{D}(h^{0}( {N_{i}}_{|_{F}} )-1) \geq \beta_{D}(r_{i}-1) \text{,}
\]
and 
\[
\forall i;  1 \leq i \leq \Hat{n}_{(D, \mathcal{F})}, \hspace{0.4cm} d_{i} \geq 2(h^{0}( {N_{i}}_{|_{F}} )-1) \geq 2(r_{i}-1) \text{.}
\]
So, we decompose the right hand side of \eqref{eq} into three parts:
\[
D^{2} \geq \sum_{i = 1}^{\Hat{n}_{(D, \mathcal{F})}-1} (d_{i} + d_{i+1})(\mu_{i} - \mu_{i+1}) +\sum_{i = \Hat{n}_{(D, \mathcal{F})}+1}^{k-1} (d_{i} + d_{i+1})(\mu_{i} - \mu_{i+1}) \]
\[
    + (d_{\Hat{n}_{(D, \mathcal{F})}} + d_{\Hat{n}_{(D, \mathcal{F})} +1})(\mu_{\Hat{n}_{(D,\mathcal{F})}} - \mu_{\Hat{n}_{(D,\mathcal{F})}+1})      
\]
\[+ 2 \Hat{D}.{Z}_{k} -Z_{k}^{2} + 2\mu_{k}d_{k} \text{.}
\]
Which implies
\[
D^{2} \geq 4\sum_{i = 1}^{\Hat{n}_{(D, \mathcal{F})}-1} r_{i}(\mu_{i} - \mu_{i+1}) + 2\beta_{D} \sum_{i = \Hat{n}_{(D, \mathcal{F})} + 1}^{k-1} r_{i}(\mu_{i} - \mu_{i+1})
        \]
        \[
        -2(\mu_{1} - \mu_{\Hat{n}_{(D, \mathcal{F})}}) - \beta_{D}(\mu_{\Hat{n}_{(D, \mathcal{F})} +1} -\mu_{k}) + (\beta_{D}   + 2)r_{\Hat{n}_{(D, \mathcal{F})}}(\mu_{\Hat{n}_{(D, \mathcal{F})}} - \mu_{\Hat{n}_{(D, \mathcal{F})} + 1})
        \]
        \[
       + 2 \Hat{D}.{Z}_{k} -Z_{k}^{2} + 2\mu_{k}d_{k} 
      \text{,}  \]
since 
$$d_{\Hat{n}_{(D, \mathcal{F})}} \geq 2(r_{\Hat{n}_{(D, \mathcal{F})}} -1)\text{,} \hspace{0.2cm} \text{and} \hspace{0.2cm} d_{\Hat{n}_{(D, \mathcal{F})} + 1} \geq \beta_{D}(r_{\Hat{n}_{(D, \mathcal{F})} +1} -1) \geq \beta_{D}(r_{\Hat{n}_{(D, \mathcal{F})}}) \text{.}$$ 
\end{itemize}
\end{proof}
\begin{remark}
The lower bound  that we obtain for $D^{2}$ in  Theorem \ref{deductiontheorem} for $1 \leq \Hat{n}_{(D, \mathcal{F})} < k$ is given by a sum of three parts. The first part,  
$$4\sum_{i = 1}^{\Hat{n}_{(D, \mathcal{F})}-1} r_{i}(\mu_{i} - \mu_{i+1})$$ describes the effect of special Miyaoka divisors. The second part,
$$2\beta_{D} \sum_{i = \Hat{n}_{(D, \mathcal{F})} + 1}^{k-1} r_{i}(\mu_{i} - \mu_{i+1})$$ explains the impact of the nonspecial Miyaoka divisors in the sequence $(N_{i})_{1\leq i \leq k}$. The third part is a  transition from special to nonspecial Miyaoka divisors. In general, we could give a lower bound for $D^{2}$ by $$4\deg \mathcal{F}_{\Hat{n}_{(D,\mathcal{F})}}, \hspace{0.2cm} 2\beta_{D}\deg \mathcal{F}/\mathcal{F}_{\Hat{n}_{(D,\mathcal{F})}}$$ and other terms, but to formulate the slope inequality for the data $(D, \mathcal{F})$, it turns out that we should first start by taking  $\min(2, \beta_{D})$ and write, for instance, $$d_{i} \geq \min(2, \beta_{D})(r_{i} -1)\text{.}$$ Following this set-up, we will obtain in the next corollary a lower bound for $D^{2}$ by $$2 \min(2, \beta_{D})\deg \mathcal{F}$$ and other terms that we will explore in the upcoming paragraphs.
\end{remark}
\begin{defn}\label{alphadefinition}
Let $f: S \to C$ be a fibred surface, $F$ its general fiber. Consider the data $(D, \mathcal{F})$ as before, where $D$ is a relatively effective divisor on $S$ and $\mathcal{F} \subseteq f_{*}\mathcal{O}_{S}(D)$ with $\rk \mathcal{F}\geq 2$. Then we define the following number:
\[ \alpha_{(D,\mathcal{F})} := \left\{
    \begin{array}{ll}
\beta_{D}  & \mbox{if} \hspace{0.2cm}  \Hat{n}_{(D,\mathcal{F})} = -\infty 
\\ 2 & \mbox{if} \hspace{0.2cm}  \Hat{n}_{(D,\mathcal{F})} = k 
\\ \min(2, \beta_{D})& 
\mbox{if} \hspace{0.2cm} 1 \leq \Hat{n}_{(D, \mathcal{F})} < k.
\end{array}
\right. 
\]
\end{defn}

\begin{corollary}\label{Cor slope}

Let $f: S \to C$ be a fibred surface with general fiber $F$. Consider the data  $(D, \mathcal{F})$ where $D$ is a relatively effective divisor on $S$ and $\mathcal{F} \subseteq f_{*}\mathcal{O}_{S}(D)$ is a locally free sub-sheaf on $C$. 
\begin{itemize}
    \item[(1).]  If $\rk \mathcal{F} \geq 2$, then
\[D^{2} \geq 2\alpha_{(D,\mathcal{F})} \deg \mathcal{F} - \alpha_{(D,\mathcal{F})}\mu_{1}
+ (\alpha_{(D,\mathcal{F})} - 2\alpha_{(D,\mathcal{F})} r_{k} + 2d_{k})\mu_{k} + 2 \epsilon^{*}D.Z_{k} - Z_{k}^{2} \text{.}
\]
Again, here $\epsilon$ is constructed as in Proposition \ref{proposition3.3}. 
\item[(2).] Else, if $\rk \mathcal{F} =1$,  then 
\[
D^{2} \geq 2d_{1} \deg \mathcal{F} + 2 \epsilon^{*}D.Z_{1} - Z_{1}^{2} \text{.}
\]
\end{itemize}

\end{corollary}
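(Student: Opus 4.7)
My plan is to derive part (1) from Theorem \ref{deductiontheorem} by a case analysis on $\Hat{n}_{(D,\mathcal{F})}$ matching the three cases in Definition \ref{alphadefinition}, and to handle part (2) by a direct application of the Modified Xiao Lemma with one divisor in the sequence.

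For the two extreme cases of part (1), namely $\Hat{n}_{(D,\mathcal{F})} = -\infty$ (where $\alpha_{(D,\mathcal{F})} = \beta_{D}$) and $\Hat{n}_{(D,\mathcal{F})} = k$ (where $\alpha_{(D,\mathcal{F})} = 2$), the stated inequality is exactly Theorem \ref{deductiontheorem}(1) and Theorem \ref{deductiontheorem}(2) after substituting the value of $\alpha_{(D,\mathcal{F})}$. One checks numerically for the $\alpha = 2$ case that $4\deg\mathcal{F} = 2\alpha\deg\mathcal{F}$, $-2\mu_{1} = -\alpha\mu_{1}$, and $2(1 - 2r_{k} + d_{k})\mu_{k} = (\alpha - 2\alpha r_{k} + 2d_{k})\mu_{k}$, so no further argument is needed.

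For the remaining intermediate case $1 \leq \Hat{n}_{(D,\mathcal{F})} < k$, the idea, foreshadowed in the remark preceding Definition \ref{alphadefinition}, is to replace the split bound used in Theorem \ref{deductiontheorem}(3) by the uniform constant $\alpha_{(D,\mathcal{F})} = \min(2, \beta_{D})$. Combining Clifford's Theorem on the special indices with Theorem \ref{3.10} on the nonspecial ones, and then invoking Lemma \ref{3.6}, I obtain for every $i$ the uniform bound
\[
d_{i} \geq \alpha_{(D,\mathcal{F})}\bigl(h^{0}(F, {N_{i}}_{|_{F}}) - 1\bigr) \geq \alpha_{(D,\mathcal{F})}(r_{i} - 1).
\]
Since the Harder-Narasimhan filtration is strict I also have $r_{i+1} \geq r_{i} + 1$, whence $d_{i} + d_{i+1} \geq \alpha_{(D,\mathcal{F})}(2r_{i} - 1)$. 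Substituting this into the baseline inequality (eq) used in the proof of Theorem \ref{deductiontheorem}, telescoping the $\mu_{i} - \mu_{i+1}$ terms, and using Lemma \ref{degreelemma} in the form $\sum_{i=1}^{k-1} r_{i}(\mu_{i} - \mu_{i+1}) = \deg\mathcal{F} - r_{k}\mu_{k}$, produces exactly the asserted bound.

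For part (2), if $\rk\mathcal{F} = 1$ then the Harder-Narasimhan filtration has length $k = 1$ with $\mu_{1} = \deg\mathcal{F}$, and $N_{1}$ is nef on $\Hat{S}$ by Lemma \ref{Miyaoka:lemma}. Applying Lemma \ref{4.1} with $j = 1$ to the single divisor $Z_{1}$ and slope $\mu_{1}$ gives
\[
D^{2} = \Hat{D}^{2} \geq 2\,\epsilon^{*}D.Z_{1} - Z_{1}^{2} + 2\mu_{1}d_{1} = 2\,\epsilon^{*}D.Z_{1} - Z_{1}^{2} + 2 d_{1}\deg\mathcal{F},
\]
which is the desired inequality. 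The main obstacle is the intermediate case of part (1): one must verify carefully that the uniform constant $\min(2,\beta_{D})$ is indeed a lower bound for $d_{i}/(r_{i}-1)$ across both the special and nonspecial portions of the filtration, and then carry out the telescoping computation cleanly so that the transition term at $i = \Hat{n}_{(D,\mathcal{F})}$ is absorbed rather than retained as a separate summand.
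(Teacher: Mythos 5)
Your proposal is correct and follows essentially the same route as the paper: the two extreme cases of part (1) are read off from Theorem \ref{deductiontheorem}, the intermediate case is handled by the uniform bound $d_{i} \geq \alpha_{(D,\mathcal{F})}(h^{0}(F,{N_{i}}_{|_{F}})-1) \geq \alpha_{(D,\mathcal{F})}(r_{i}-1)$ substituted into inequality \eqref{eq} and telescoped via Lemma \ref{degreelemma}, and part (2) is the direct application of Lemma \ref{4.1} (i.e.\ \eqref{eq} with $k=1$). No gaps; your explicit use of $r_{i+1}\geq r_{i}+1$ even fixes a small typo in the paper's own wording.
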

 \begin{proof}
    First, assuming that $\rk \mathcal{F} \geq 2$, 
    for the cases $\Hat{n}_{(D, \mathcal{F})} = -\infty, k$, the inequality is proved in  Theorem \ref{deductiontheorem}. \par We only need to  prove the case $1 \leq \Hat{n}_{(D, \mathcal{F})} < k$. By Theorem \ref{3.10} and Lemma \ref{3.6}, we have  
\[ d_{i} \geq \alpha_{(D,\mathcal{F})} (h^{0}({N_{i}}_{|_{F}} )-1) \hspace{0.2cm} \forall i;  1 \leq i \leq k  \text{.}\]
Putting this last inequality into inequality  \eqref{eq},  we obtain: 
\[
D^{2} \geq 2\alpha_{(D,\mathcal{F})} \sum_{i = 1}^{k-1} r_{i}(\mu_{i} - \mu_{i+1})
- \alpha_{(D,\mathcal{F})}(\mu_{1} - \mu_{k})  + 2\epsilon^{*}D.Z_{k} - Z_{k}^{2} + 2\mu_{k}d_{k}\text{.}
\]
By Lemma \ref{degreelemma}, we deduce the desired inequality:
\[D^{2} \geq 2\alpha_{(D,\mathcal{F})} \deg \mathcal{F} - \alpha_{(D,\mathcal{F})}\mu_{1}
+ (\alpha_{(D,\mathcal{F})} - 2\alpha_{(D,\mathcal{F})} r_{k} + 2d_{k})\mu_{k} + 2 \epsilon^{*}D.Z_{k} - Z_{k}^{2} \text{.}
\]
Now, if $\rk \mathcal{F} = 1$, then $\mathcal{F}$ is a locally free sheaf of rank $1$, $k = 1$, and $\mu_{k} = \mu_{1}$.  Therefore, $$h^{0}({N_{1}}_{|_{F}}) = r_{1} = 1, \hspace{0.2cm} d_{k} = d_{1} = g(F) - h^{1}({N_{1}}_{|_{F}})\text{.}$$ By the inequality \eqref{eq}, which is a consequence of Lemma \ref{4.1} and does not require the assumption that $\mathcal{F}$ has rank at least $2$, we have 
\[
D^{2} \geq 2d_{1}\deg \mathcal{F} + 2 \epsilon^{*}D.{Z}_{1} - Z_{1}^{2} \text{.}
\]
 \end{proof}
 \begin{remark}\label{remark4.6}
     If $\mathcal{F}$ is a semi-stable locally free sheaf, then by Corollary \ref{Cor slope}, we have the following inequality:
      \[
D^{2} \geq 2\frac{d_{1}}{r_{1}}\deg \mathcal{F} + 2 \epsilon^{*}D.{Z}_{1} - Z_{1}^{2} \text{.}
\]
     \end{remark}
 \par Now, the goal is to discuss the first point in  Corollary \ref{Cor slope} with respect to $\mu_{1}$ and $\mu_{k}$. Without loss of generality, we assume that $\mathcal{F}$ is not a semi-stable locally free sheaf in the next theorem.
 
 \begin{theorem}\label{main result}
     Let $f: S \to C$ be a fibred surface. Consider the data $(D, \mathcal{F})$, where $D$ is a relatively effective divisor and $\mathcal{F} \subseteq f_{*}\mathcal{O}_{S}(D)$ is a locally free sub-sheaf on $C$. Assume that $\mathcal{F}$ is not semi-stable. Here, we set:
\[ t_{(D, \mathcal{F})} := \left\{
    \begin{array}{ll}
\max\{i| \mu_{i} \geq 0\}  & \mbox{if $\mu_{1} \geq 0$}  
\\ -\infty & \mbox{otherwise} \text{.}
\end{array}
\right. 
\]  
\begin{itemize}
  \item[(1).] If $t_{(D,\mathcal{F})} = 1$, then
  \[
  D^{2} \geq \frac{2d_{1}}{r_{1}} \deg \mathcal{F}_{1} + 2 \epsilon^{*}D.Z_{1} - Z_{1}^{2} \geq \frac{2d_{1}}{r_{1}} \deg \mathcal{F} + 2 \epsilon^{*}D.Z_{1} - Z_{1}^{2} \text{.} 
  \]  
    \item[(2).] If $1 < t_{(D,\mathcal{F})} \leq k$, then
  \[
  D^{2} \geq \frac{2\alpha_{(D,\mathcal{F}_{t_{(D,\mathcal{F})}})} d_{t_{(D, \mathcal{F})}}}{d_{t_{(D,\mathcal{F})}} + \alpha_{(D,\mathcal{F}_{t_{(D, \mathcal{F})}})}} \deg \mathcal{F}_{t_{(D,\mathcal{F})}} + 2 \epsilon^{*}D.Z_{t_{(D,\mathcal{F})}} - Z_{t_{(D,\mathcal{F})}}^{2}
  \]
  \[\geq \frac{2\alpha_{(D,\mathcal{F}_{t_{(D,\mathcal{F})}})} d_{t_{(D,\mathcal{F})}}}{d_{t_{(D,\mathcal{F})}} + \alpha_{(D,\mathcal{F}_{t_{(D,\mathcal{F})}})}} \deg \mathcal{F} + 2 \epsilon^{*}D.Z_{t_{(D,\mathcal{F})}} - Z_{t_{(D,\mathcal{F})}}^{2}\text{.} 
  \]  
    \item[(3).] If $t_{(D,\mathcal{F})} = -\infty$, set:
  \[ C_{(D,\mathcal{F})} := \left\{
    \begin{array}{ll}
\frac{2\alpha_{(D,\mathcal{F})} d_{k}}{-\alpha_{(D,\mathcal{F})} + 2\alpha_{(D,\mathcal{F})} r_{k} - d_{k}}  & \mbox{if $\alpha_{(D,\mathcal{F})} - 2\alpha_{(D,\mathcal{F})} r_{k} + 2d_{k} \leq 0$} 
\\ 3\alpha_{(D,\mathcal{F})} + 2d_{k} - 2\alpha_{(D,\mathcal{F})} r_{k} & \mbox{otherwise} \text{.}
\end{array}
\right. 
\]  
Then
    \[
D^{2} \geq C_{(D,\mathcal{F})}. \deg \mathcal{F} + 2 \epsilon^{*}D.Z_{k} - Z_{k}^{2} \text{.} 
    \]
\end{itemize}
In addition, if $d_{k} = \alpha_{(D,\mathcal{F})} (r_{k} - 1)$, then we have the following inequality which is independent of $t_{(D,\mathcal{F})}$:
\[
D^{2} \geq \frac{2d_{k}}{r_{k}} \deg \mathcal{F} + 2 \epsilon^{*}D.Z_{k} - Z_{k}^{2} \text{.} 
\]
 \end{theorem}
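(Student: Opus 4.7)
My approach is to derive each of the four assertions by applying Corollary~\ref{Cor slope} (or Remark~\ref{remark4.6}) to an appropriate sub-bundle and, where needed, taking a convex combination of the resulting bound with an auxiliary bound coming from a separate application of the Modified Xiao Lemma~\ref{4.1}, chosen so as to cancel the $\mu_{1}$-term that obstructs a clean slope inequality.

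\textbf{Case (1).} Since $t_{(D,\mathcal{F})}=1$ forces $\mu_{i}<0$ for $i\geq 2$, the sub-bundle $\mathcal{F}_{1}$ is semi-stable of slope $\mu_{1}\geq 0$. Remark~\ref{remark4.6} applied to $\mathcal{F}_{1}$ (equivalently, Lemma~\ref{4.1} with $j=1$, $\mathcal{Z}_{1}=Z_{1}$, slope $\mu_{1}$) yields the first inequality; the second follows from the identity $\deg\mathcal{F}=\deg\mathcal{F}_{1}+\sum_{i\geq 2}(r_{i}-r_{i-1})\mu_{i}\leq\deg\mathcal{F}_{1}$ together with $d_{1}/r_{1}\geq 0$ from Proposition~\ref{positivity of degree}.

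\textbf{Case (2).} Set $t:=t_{(D,\mathcal{F})}$ and $\alpha:=\alpha_{(D,\mathcal{F}_{t})}$. First, Corollary~\ref{Cor slope}(1) applied to $\mathcal{F}_{t}$ gives
\[
(A):\quad D^{2}\geq 2\alpha\deg\mathcal{F}_{t}-\alpha\mu_{1}+(\alpha-2\alpha r_{t}+2d_{t})\mu_{t}+2\epsilon^{*}D.Z_{t}-Z_{t}^{2}.
\]
Second, Lemma~\ref{4.1} applied to the three-step sequence $(Z_{1},\mu_{1}),\,(Z_{t},\mu_{t}),\,(Z_{t},0)$ is admissible since $Z_{1}\geq Z_{t}\geq Z_{t}$, $\mu_{1}\geq\mu_{t}\geq 0$, and the corresponding divisors $N_{1}$, $N_{t}$, $M_{t}=N_{t}+\mu_{t}F$ are all nef. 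After using $d_{1}\geq 0$ (Proposition~\ref{positivity of degree}) this yields
\[
(B):\quad D^{2}\geq d_{t}(\mu_{1}+\mu_{t})+2\epsilon^{*}D.Z_{t}-Z_{t}^{2}.
\]
I would then take the convex combination $\lambda\cdot(A)+(1-\lambda)\cdot(B)$ with $\lambda=d_{t}/(d_{t}+\alpha)\in[0,1]$. This choice cancels the coefficient of $\mu_{1}$ exactly, produces the desired $\frac{2\alpha d_{t}}{d_{t}+\alpha}$ in front of $\deg\mathcal{F}_{t}$, and leaves a residual coefficient on $\mu_{t}$ equal to $\frac{2d_{t}(d_{t}-\alpha(r_{t}-1))}{d_{t}+\alpha}\geq 0$, by the HN bound $d_{t}\geq\alpha(r_{t}-1)$; since $\mu_{t}\geq 0$ this non-negative tail is dropped. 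The second inequality follows from $\deg\mathcal{F}\leq\deg\mathcal{F}_{t}$ (negative $\mu_{i}$ for $i>t$) and the non-negativity of the leading coefficient.

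\textbf{Case (3).} With $\alpha:=\alpha_{(D,\mathcal{F})}$, start with Corollary~\ref{Cor slope}(1) applied to $\mathcal{F}$; now both $\mu_{1}$ and $\mu_{k}$ are strictly negative. The sub-case split on $\alpha-2\alpha r_{k}+2d_{k}$ reflects whether the $\mu_{k}$-term is automatically non-negative. In sub-case 3a (non-positive coefficient), combining with the auxiliary bound $D^{2}\geq 2\epsilon^{*}D.Z_{k}-Z_{k}^{2}+2\mu_{k}d_{k}$ arising from $N_{k}^{2}\geq 0$ (Lemma~\ref{Miyaoka:lemma}), via the weight $\lambda=d_{k}/(2\alpha r_{k}-\alpha-d_{k})\in[0,1]$ (which lies in this interval precisely by the sub-case hypothesis), produces the coefficient $C_{(D,\mathcal{F})}=\frac{2\alpha d_{k}}{2\alpha r_{k}-\alpha-d_{k}}$, with residual $\mu_{1}$- and $\mu_{k}$-contributions of the right sign to be dropped. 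In sub-case 3b, the $\mu_{k}$-term is genuinely awkward: substituting the HN-derived upper bound $\mu_{1}\leq\deg\mathcal{F}-(r_{k}-1)\mu_{k}$ (a consequence of Lemma~\ref{degreelemma} and $\mu_{i}\geq\mu_{k}$) into the $-\alpha\mu_{1}$ term reduces the Corollary bound to the form $\alpha\deg\mathcal{F}+(2d_{k}-\alpha r_{k})\mu_{k}+(\mathrm{Z\ terms})$, from which the coefficient $3\alpha+2d_{k}-2\alpha r_{k}$ is extracted by a further sign analysis on $\mu_{k}<0$.

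\textbf{Final statement.} The hypothesis $d_{k}=\alpha(r_{k}-1)$ makes $\alpha-2\alpha r_{k}+2d_{k}=-\alpha$, so Corollary~\ref{Cor slope}(1) reduces to $D^{2}\geq 2\alpha\deg\mathcal{F}-\alpha(\mu_{1}+\mu_{k})+2\epsilon^{*}D.Z_{k}-Z_{k}^{2}$. A convex combination with the auxiliary lower bound from $N_{k}^{2}\geq 0$ of weight $\lambda=(r_{k}-1)/r_{k}$ yields the coefficient $\frac{2\alpha(r_{k}-1)}{r_{k}}=\frac{2d_{k}}{r_{k}}$ in front of $\deg\mathcal{F}$; the remaining $\mu_{1},\mu_{k}$ terms are then controlled by a sign analysis on $\deg\mathcal{F}$, with the auxiliary bound handling the region $\deg\mathcal{F}<0$.

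\textbf{Main obstacle.} The cleanest step is Case (2), where the convex-combination trick with weight $d_{t}/(d_{t}+\alpha)$ gives both the right leading coefficient and a non-negative $\mu_{t}$-residual that drops out for free. The hardest step I expect is sub-case 3b together with the final $t$-independent statement: there the signs of $\mu_{1}$, $\mu_{k}$, and $\deg\mathcal{F}$ can conspire against any single convex-combination approach, and one must carefully exploit the HN interlacing via Lemma~\ref{degreelemma} and the inequality $\mu_{1}\leq\deg\mathcal{F}-(r_{k}-1)\mu_{k}$ to recover the precise coefficients $C_{(D,\mathcal{F})}$ and $\frac{2d_{k}}{r_{k}}$.
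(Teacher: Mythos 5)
Your cases (1) and (2) are fine and are essentially the paper's own argument: the convex combination with weight $d_{t}/(d_{t}+\alpha)$ is algebraically identical to the paper's step of bounding $-\alpha(\mu_{1}+\mu_{t})$ by means of Lemma \ref{4.1} applied to $\{Z_{1},Z_{t}\}$, $\{\mu_{1},\mu_{t}\}$, and your residual check uses the same key inequality $d_{t}\geq\alpha(r_{t}-1)$. The genuine gap is in sub-case (3a) and in the final $t$-independent statement, and it has a single source: you replace the auxiliary inequality the cancellation actually needs, namely Lemma \ref{4.1} applied to the two-term data $\{Z_{1},Z_{k}\}$, $\{\mu_{1},\mu_{k}\}$ (which gives $D^{2}\geq d_{k}(\mu_{1}+\mu_{k})+2\epsilon^{*}D.Z_{k}-Z_{k}^{2}$, valid with no sign hypothesis on the $\mu_{i}$), by the weaker bound $D^{2}\geq 2d_{k}\mu_{k}+2\epsilon^{*}D.Z_{k}-Z_{k}^{2}$ coming from $N_{k}^{2}\geq 0$. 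The missing $d_{k}\mu_{1}$ term is exactly what makes the weights work. Concretely, in (3a), writing $A:=\alpha-2\alpha r_{k}+2d_{k}\leq 0$, your combination with weight $\lambda=d_{k}/(d_{k}-A)$ leaves the residual $\tfrac{d_{k}}{d_{k}-A}\bigl(-\alpha\mu_{1}+(-A)\mu_{k}\bigr)$, which is negative whenever $\alpha|\mu_{1}|<(-A)|\mu_{k}|$ (e.g.\ $\mu_{1}$ close to $0$ and $\mu_{k}$ very negative, with $A<0$), so it is not ``of the right sign to be dropped''; even after the paper's preliminary weakening $-\alpha\mu_{1}\geq A\mu_{1}$ the residual becomes $\tfrac{d_{k}A}{d_{k}-A}(\mu_{1}-\mu_{k})\leq 0$. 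The same defect kills your final statement: with weight $(r_{k}-1)/r_{k}$ the residual is $\tfrac{\alpha(r_{k}-1)}{r_{k}}(\mu_{k}-\mu_{1})<0$, and the proposed rescue that ``the auxiliary bound handles $\deg\mathcal{F}<0$'' cannot work, because $\deg\mathcal{F}\geq r_{k}\mu_{k}$ always holds, hence $2d_{k}\mu_{k}\leq\tfrac{2d_{k}}{r_{k}}\deg\mathcal{F}$ and the $N_{k}^{2}\geq 0$ bound never implies the target inequality on its own.

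The repair is exactly the paper's proof: in case (3) first weaken $-\alpha\mu_{1}$ to $A\mu_{1}$ (legitimate since $A\geq-\alpha$ and $\mu_{1}<0$), so both bad terms carry the same coefficient $A$, and then combine with $D^{2}\geq d_{k}(\mu_{1}+\mu_{k})+2\epsilon^{*}D.Z_{k}-Z_{k}^{2}$; with your weight $d_{k}/(d_{k}-A)$ the terms $\mu_{1}+\mu_{k}$ cancel exactly and you get $C_{(D,\mathcal{F})}$, and the same two-term bound with weight $(r_{k}-1)/r_{k}$ gives the final statement when $d_{k}=\alpha(r_{k}-1)$ (there $A=-\alpha$ and both $\mu$-coefficients vanish identically, with no case split on $\deg\mathcal{F}$). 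Sub-case (3b) of your write-up does reach the right coefficient, but ``a further sign analysis on $\mu_{k}<0$'' is not sufficient: from your intermediate bound $\alpha\deg\mathcal{F}+(2d_{k}-\alpha r_{k})\mu_{k}$ one still needs $\deg\mathcal{F}\leq(r_{k}-1)\mu_{k}$ together with the verification $2d_{k}-\alpha r_{k}\leq(2\alpha+2d_{k}-2\alpha r_{k})(r_{k}-1)$, which uses $A>0$ and $r_{k}\geq 2$; the paper's route (weaken $-\alpha\mu_{1}$ to $A\mu_{1}$, then use $\mu_{1}+\mu_{k}\geq\deg\mathcal{F}$ and $A>0$) avoids this entirely.
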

\begin{proof}
We recall that $(\mathcal{F}_{i})_{1 \leq i \leq k}$ is the sequence of  Harder-Narasimhan filtration of $\mathcal{F}$. If we take an integer $m$ such that $1 \leq m \leq k$, then  $(\mathcal{F}_{i})_{1 \leq i \leq m}$ is the sequence of  Harder-Narasimhan filtration of $\mathcal{F}_{m}$. By  Corollary \ref{Cor slope}, we have the following inequality for the data $(D, \mathcal{F}_{m})$ such that $\rk(\mathcal{F}_{m}) \geq 2$:
\begin{equation}\label{eq1}
    D^{2} \geq 2\alpha_{(D,\mathcal{F}_{m})} \deg \mathcal{F}_{m} - \alpha_{(D,\mathcal{F}_{m})}\mu_{1}
+ (\alpha_{(D,\mathcal{F}_{m})} - 2\alpha_{(D,\mathcal{F}_{m})} r_{m} + 2d_{m})\mu_{m} + 2 \epsilon^{*}D.Z_{m} - Z_{m}^{2} \text{.} 
\end{equation}
\begin{itemize}
    \item[(1).] If $t_{(D,\mathcal{F})} = 1$, we have $\mu_{1} \geq 0$ and starting from $i =2$, $\mu_{i} < 0$. Thus, we apply Remark \ref{remark4.6} for $\mathcal{F}_{1}$:
  \[
  D^{2} \geq \frac{2d_{1}}{r_{1}} \deg \mathcal{F}_{1} + 2 \epsilon^{*}D.Z_{1} - Z_{1}^{2} \geq \frac{2d_{1}}{r_{1}} \deg \mathcal{F} + 2 \epsilon^{*}D.Z_{1} - Z_{1}^{2} \text{.} 
  \]
\item[(2).] If $t_{(D,\mathcal{F})} > 1$, we have  $\mu_{1}, \dots, \mu_{t_{(D,\mathcal{F})}} \geq 0$. Therefore, by considering that
\[
\alpha_{(D,\mathcal{F}_{t_{(D,\mathcal{F})}})} - 2\alpha_{(D,\mathcal{F}_{t_{(D,\mathcal{F})}})} r_{t_{(D,\mathcal{F})}} + 2d_{t_{(D,\mathcal{F})}} \geq -\alpha_{(D,\mathcal{F}_{t_{(D,\mathcal{F})}})} \text{,}
\]
and using inequality \eqref{eq1},  we deduce  the following inequality  for $m =t_{(D,\mathcal{F})}$:
\begin{equation}\label{eq2}
    D^{2} \geq 2\alpha_{(D,\mathcal{F}_{t_{(D,\mathcal{F})}})} \deg \mathcal{F}_{t_{(D,\mathcal{F})}} - \alpha_{(D,\mathcal{F}_{t_{(D,\mathcal{F})}})} (\mu_{1} + \mu_{t_{(D,\mathcal{F})}})
+ 2 \epsilon^{*}D.Z_{t_{(D,\mathcal{F})}} - Z_{t_{(D,\mathcal{F})}}^{2} \text{.} 
\end{equation}
Now, we apply  Lemma \ref{4.1} to the sequences $\{Z_{1}, Z_{t_{(D,\mathcal{F})}}\}$ and $\{\mu_{1}, \mu_{t_{(D,\mathcal{F})}}\}$. Thus,
\begin{equation}\label{eq3}
    D^{2} \geq d_{t_{(D,\mathcal{F})}}(\mu_{1} + \mu_{t_{(D,\mathcal{F})}}) + d_{1}(\mu_{1} - \mu_{t_{(D,\mathcal{F})}}) + 2 \epsilon^{*}D.Z_{t_{(D,\mathcal{F})}} - Z_{t_{(D,\mathcal{F})}}^{2} \text{.} 
\end{equation}
Since $d_{1}(\mu_{1} - \mu_{t_{(D,\mathcal{F})}}) \geq 0$, and  by Remark \ref{degreeremark},  we have $d_{t_{(D,\mathcal{F})}} \geq 1$. Then we can divide by $d_{t_{(D,\mathcal{F})}}$, thus the following inequality follows:
\[
- \alpha_{(D,\mathcal{F}_{t_{(D,\mathcal{F})}})} (\mu_{1} + \mu_{t_{(D,\mathcal{F})}}) \geq \frac{-\alpha_{(D,\mathcal{F}_{t_{(D,\mathcal{F})}})}}{d_{t_{(D,\mathcal{F})}}} D^{2} + 2\frac{\alpha_{(D,\mathcal{F}_{t_{(D,\mathcal{F})}})}}{d_{t_{(D,\mathcal{F})}}} \epsilon^{*}D.Z_{t_{(D,\mathcal{F})}} - \frac{\alpha_{(D,\mathcal{F}_{t_{(D,\mathcal{F})}})}}{d_{t_{(D,\mathcal{F})}}}Z_{t_{(D,\mathcal{F})}}^{2} \text{.}
\]
Combining this last inequality and the inequality \eqref{eq2}, we deduce the desired result: 
 \[
  D^{2} \geq \frac{2\alpha_{(D,\mathcal{F}_{t_{(D,\mathcal{F})}})} d_{t_{(D, \mathcal{F})}}}{d_{t_{(D,\mathcal{F})}} + \alpha_{(D,\mathcal{F}_{t_{(D, \mathcal{F})}})}} \deg \mathcal{F}_{t_{(D,\mathcal{F})}} + 2 \epsilon^{*}D.Z_{t_{(D,\mathcal{F})}} - Z_{t_{(D,\mathcal{F})}}^{2}
  \]
  \[\geq \frac{2\alpha_{(D,\mathcal{F}_{t_{(D,\mathcal{F})}})} d_{t_{(D,\mathcal{F})}}}{d_{t_{(D,\mathcal{F})}} + \alpha_{(D,\mathcal{F}_{t_{(D,\mathcal{F})}})}} \deg \mathcal{F} + 2 \epsilon^{*}D.Z_{t_{(D,\mathcal{F})}} - Z_{t_{(D,\mathcal{F})}}^{2}\text{.} 
  \]  
\item[(3).] If $t_{(D,\mathcal{F})} = - \infty$, so both $\mu_{1}, \mu_{k} < 0$. By the fact that 
\[
\alpha_{(D,\mathcal{F})} - 2\alpha_{(D,\mathcal{F})} r_{k} + 2d_{k} \geq -\alpha_{(D,\mathcal{F})} \text{,}
\]
we conclude that
\[
(\alpha_{(D,\mathcal{F})} - 2\alpha_{(D,\mathcal{F})} r_{k} + 2d_{k})\mu_{1} \leq -\alpha_{(D,\mathcal{F})} \mu_{1} \text{.}
\]
We replace this last inequality in \eqref{eq1} for the data $(D, \mathcal{F})$, thus
\[D^{2} \geq 2\alpha_{(D,\mathcal{F})} \deg \mathcal{F} + (\alpha_{(D,\mathcal{F})} - 2\alpha_{(D,\mathcal{F})} r_{k} + 2d_{k})(\mu_{1} + \mu_{k}) + 2 \epsilon^{*}D.Z_{k} - Z_{k}^{2} \text{.}
\]
We set 
$$A := \alpha_{(D,\mathcal{F})} - 2\alpha_{(D,\mathcal{F})} r_{k} + 2d_{k}\text{.}$$ To obtain the desired result,  we discuss the above  inequality depending on whether $A \leq 0$ or $A >0$.  
\begin{itemize}
    \item[(a).] If $A \leq 0$, then we apply  Lemma \ref{4.1} to the sequences $\{Z_{1}, Z_{k}\}$ and $\{\mu_{1}, \mu_{k}\}$:
\begin{equation}\label{eq4}
    D^{2}  \geq d_{k}(\mu_{1} + \mu_{k}) + 2\epsilon^{*}D.Z_{k} - Z_{k}^{2} \text{.}
\end{equation}
Combining the two last inequalities above, we deduce
\[D^{2} \geq \frac{2\alpha_{(D,\mathcal{F})} d_{k}}{-\alpha_{(D,\mathcal{F})} + 2\alpha_{(D,\mathcal{F})} r_{k} - d_{k}}\deg \mathcal{F} + 2\epsilon^{*}D.Z_{k} - Z_{k}^{2} \text{.}\]
\item[(b).] If $A > 0 $, we note that
\[
 \deg \mathcal{F} \leq \mu_{1} + \mu_{k} \text{.}
\]
Then,
\[
D^{2} \geq (3\alpha_{(D,\mathcal{F})} + 2d_{k} - 2\alpha_{(D,\mathcal{F})} r_{k}) \deg \mathcal{F} + 2 \epsilon^{*}D.Z_{k} - Z_{k}^{2} \text{.}
\]
\end{itemize}
\end{itemize}
 Now, we prove the last point of the theorem. If $d_{k} = \alpha_{(D,\mathcal{F})} (r_{k} - 1)$, we apply  inequality \eqref{eq1} for $m =k$:
\[D^{2} \geq 2\alpha_{(D,\mathcal{F})} \deg \mathcal{F} - \alpha_{(D,\mathcal{F})} (\mu_{1} + \mu_{k})
+ 2 \epsilon^{*}D.Z_{k} - Z_{k}^{2} \text{.}
\]
Combining this last inequality and  inequality \eqref{eq4}, we deduce the desired result: 
\[
D^{2} \geq \frac{2d_{k}}{r_{k}} \deg \mathcal{F} + 2 \epsilon^{*}D.Z_{k} - Z_{k}^{2} \text{.} 
\]
\end{proof}
\begin{proposition}\label{propositionhit}
    Let $f : S \to C$ be a fibred surface, and let $D$ be a relatively effective and relatively nef divisor  on $S$. Consider the data $(D, f_{*}\mathcal{O}_{S}(D))$. Then the following number is nonnegative:
    \[\epsilon^{*}D.Z_{k} \geq 0 \text{.}\]
\end{proposition}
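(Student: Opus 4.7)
The plan is to reduce the intersection on $\Hat{S}$ to an intersection on $S$, where the relative nefness hypothesis applies directly, and to see that the exceptional contribution vanishes by the projection formula.

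First I would unpack the definition. By Definition \ref{miyaoka}, with $\mathcal{F} = f_{*}\mathcal{O}_{S}(D)$ and so $\mathcal{F}_{k} = f_{*}\mathcal{O}_{S}(D)$, we have
\[
Z_{k} = \epsilon^{*}Z_{\mathcal{F}_{k}} + E \text{,}
\]
where $E$ is the exceptional divisor of $\epsilon \colon \Hat{S} \to S$ and $Z_{\mathcal{F}_{k}}$ is the fixed part on $S$ of the linear system $L_{\mathcal{F}_{k}} \subseteq |D + f^{*}A|$ associated to sections of $\mathcal{F}_{k}\otimes \mathcal{O}(A)$ for a sufficiently ample divisor $A$ on $C$. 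Thus
\[
\epsilon^{*}D \cdot Z_{k} = \epsilon^{*}D \cdot \epsilon^{*}Z_{\mathcal{F}_{k}} + \epsilon^{*}D \cdot E \text{.}
\]

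Next I would handle each term. By the projection formula applied to the birational morphism $\epsilon$, we have $\epsilon^{*}D \cdot \epsilon^{*}Z_{\mathcal{F}_{k}} = D \cdot Z_{\mathcal{F}_{k}}$. For the second term, writing $E = \sum k_{i} E_{i}$ as in the proof of Theorem \ref{2.3}, each exceptional component $E_{i}$ is contracted to a point by $\epsilon$, so $\epsilon_{*}E_{i} = 0$ and the projection formula yields $\epsilon^{*}D \cdot E_{i} = 0$ for every $i$. Hence $\epsilon^{*}D \cdot E = 0$, and the problem reduces to showing $D \cdot Z_{\mathcal{F}_{k}} \geq 0$.

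To finish, I would invoke property (4) recalled in Section 2: because $A$ is sufficiently ample, we may assume $Z_{\mathcal{F}_{k}}$ has no horizontal components, hence is supported on fibres of $f$. Writing $Z_{\mathcal{F}_{k}} = \sum a_{j} C_{j}$ as a nonnegative combination of irreducible components $C_{j}$ of fibres, relative nefness of $D$ gives $D \cdot C_{j} \geq 0$ for every $j$, so $D \cdot Z_{\mathcal{F}_{k}} \geq 0$ as required. There is no serious obstacle in the argument; the content of the proposition is precisely that the two potentially troublesome pieces, the exceptional part on $\Hat{S}$ and the vertical fixed part on $S$, are respectively killed by the projection formula and controlled by the relative nefness hypothesis.
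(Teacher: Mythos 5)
Your argument is correct and follows essentially the same route as the paper's proof: you decompose $Z_{k} = \epsilon^{*}Z_{\mathcal{F}_{k}} + E$, kill the exceptional term and push the remaining intersection down to $S$ via the projection formula, and then use that $Z_{\mathcal{F}_{k}}$ is vertical (property (4), $A$ sufficiently ample) together with relative nefness of $D$ to get $D.Z_{\mathcal{F}_{k}} \geq 0$. The paper does exactly this, only stating the intermediate equalities more tersely.
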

\begin{proof}
 We recall that $Z_{k}= \epsilon^{*}Z_{\mathcal{F}_{k}} + E$, where $Z_{\mathcal{F}_{k}}$ is the fixed part of $|D + f^{*}A|$ and $A$ is a sufficiently very ample divisor on $C$. Thus, $Z_{\mathcal{F}_{k}}$ is an effective divisor supported on fibers. By assumption, $D$ is relatively nef, then we conclude the desired nonnegativity:
    \[
    \epsilon^{*}D.Z_{k}  =  \epsilon^{*}D.(\epsilon^{*}Z_{\mathcal{F}_{k}} + E) = \epsilon^{*}D. \epsilon^{*}Z_{\mathcal{F}_{k}} = D.Z_{\mathcal{F}_{k}} \geq 0
\text{.}    \]
    \end{proof}
    \begin{proposition}\label{proposition fixed part}
        Let $f : S \to C$ be a fibred surface and $D$ be a relatively effective divisor. Consider the data $(D, f_{*}\mathcal{O}_{S}(D))$. Then $Z_{k}^{2} \leq 0$.
    \end{proposition}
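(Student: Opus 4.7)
The plan is to decompose $Z_k^{2}$ into three pieces according to the definition $Z_k = \epsilon^{*}Z_{\mathcal{F}_k} + E$ (Definition \ref{miyaoka}), and then show each surviving piece is nonpositive. Expanding,
\[
Z_k^{2} \;=\; (\epsilon^{*}Z_{\mathcal{F}_k})^{2} \;+\; 2\,\epsilon^{*}Z_{\mathcal{F}_k}\cdot E \;+\; E^{2}.
\]
The first step is to kill the cross term. Since $\epsilon\colon \Hat{S}\to S$ is a composition of blow-ups, $\epsilon_{*}E = 0$; by the projection formula $\epsilon^{*}Z_{\mathcal{F}_k}\cdot E = Z_{\mathcal{F}_k}\cdot \epsilon_{*}E = 0$. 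Similarly, $(\epsilon^{*}Z_{\mathcal{F}_k})^{2} = Z_{\mathcal{F}_k}^{2}$ by the projection formula, so it suffices to verify that $Z_{\mathcal{F}_k}^{2}\le 0$ and $E^{2}\le 0$ separately.

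For $Z_{\mathcal{F}_k}^{2}\le 0$, I would invoke Zariski's lemma. Recall from the construction in Section 2, property (4), that $A$ can be chosen sufficiently ample so that the fixed part $Z_{\mathcal{F}_k}$ of the linear sub-system $L_{\mathcal{F}_k}\subseteq |D+f^{*}A|$ has no horizontal components; hence $Z_{\mathcal{F}_k}$ is an effective divisor supported on a finite union of fibers of $f$. Writing $Z_{\mathcal{F}_k}$ as a nonnegative combination of irreducible components of fibers, the intersection form on components of a fixed fiber is negative semi-definite with kernel spanned by the class of the fiber itself (Zariski's lemma), and components of distinct fibers are disjoint. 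Summing these contributions shows $Z_{\mathcal{F}_k}^{2}\le 0$.

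For $E^{2}\le 0$, the key fact is that the exceptional divisors of a sequence of blow-ups at (possibly infinitely near) points span a negative definite sublattice of $\mathrm{NS}(\Hat{S})$. Since $E$ is a nonnegative integral combination of these exceptional curves (either $E = 0$, in which case $E^{2} = 0$, or $E\ne 0$, in which case $E^{2}<0$), one gets $E^{2}\le 0$. Combining the three pieces yields $Z_k^{2}\le 0$, as desired. The only potentially delicate point is the horizontality assertion on $Z_{\mathcal{F}_k}$, but this is already built into the construction recalled at the start of Section 2, so no new work is needed there.
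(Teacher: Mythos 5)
Your proof is correct, but it establishes the key inequality by a genuinely different route than the paper. You split $Z_k^2 = Z_{\mathcal{F}_k}^2 + E^2$ (the cross term vanishing by the projection formula, which is also what the paper implicitly does in its final step) and then obtain $Z_{\mathcal{F}_k}^2 \le 0$ from Zariski's lemma, using that $Z_{\mathcal{F}_k}$ is vertical; that verticality is exactly property (4) of Section 2 and is the same fact the paper invokes in the proof of Proposition \ref{propositionhit}, so it is a legitimate input here. The paper instead argues by contradiction: if $Z_{\mathcal{F}_k}^2 > 0$, then by the Hodge index theorem (Riemann--Roch) either $h^0(nZ_{\mathcal{F}_k})$ or $h^0(K_S - nZ_{\mathcal{F}_k})$ tends to infinity, the latter is impossible for $n \gg 0$, and the former is said to contradict the fact that $Z_{\mathcal{F}_k}$ is a fixed part, hence not movable. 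The paper's route avoids Zariski's lemma and does not formally use verticality, but it is the more delicate argument: being the fixed part of a single linear system does not by itself prevent multiples $nZ_{\mathcal{F}_k}$ from acquiring sections (a principal polarization $\Theta$ on an abelian surface is the fixed part of $|\Theta|$ yet $\Theta^2 = 2 > 0$), so that step implicitly leans on the vertical, non-big nature of $Z_{\mathcal{F}_k}$ anyway. Your version buys a more elementary and airtight proof once verticality is granted, at the cost of depending explicitly on that geometric hypothesis; the treatment of $E^2 \le 0$ via the negative definiteness of the exceptional lattice is common to both (the paper leaves it implicit in the final sentence of its proof).
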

     \begin{proof}
 Our aim is to prove that $Z_{\mathcal{F}_{k}}^{2} \leq 0$. By contradiction, we  assume that $Z_{\mathcal{F}_{k}}^{2} > 0 $. Then by the Hodge index theorem,  either  $$\lim_{n \to \infty} h^{0}(nZ_{\mathcal{F}_{k}}) = +\infty \hspace{0.2cm}
\text{or} \hspace{0.2cm} \lim_{n \to \infty} h^{0}(K_{S} - nZ_{\mathcal{F}_{k}}) = +\infty \text{.}$$ However, the divisor $K_{S} - nZ_{\mathcal{F}_{k}}$ is never effective if $Z_{\mathcal{F}_{k}}$ is not zero and $n \gg 0$. Thus, the only possibility is  $$\lim_{n \to \infty} h^{0}(nZ_{\mathcal{F}_{k}}) = +\infty \text{.}$$ 
This contradicts  the fact that  $Z_{\mathcal{F}_{k}}$ is not movable. Therefore, we deduce $Z_{\mathcal{F}_{k}}^{2} \leq 0$. Consequently, by the definition of $Z_{k}$: 
\[
Z_{k}= \epsilon^{*}Z_{\mathcal{F}_{k}} + E \text{,}
\]
we conclude the nonpositivity of $Z_{k}^{2}$; in other words, $Z_{k}^{2} \leq 0$.
\end{proof}
\begin{corollary}\label{secondmainresult}
    Let $f : S \to C$ be a fibred surface and $D$ be a relatively effective and relatively nef divisor on $S$. Consider the data $(D, f_{*}\mathcal{O}_{S}(D))$.
    \begin{itemize}
        \item[(1).] If $f_{*}\mathcal{O}_{S}(D)$ is semi-stable, then: 
          \[
D^{2} \geq 2\frac{D.F}{h^{0}(F, {D}_{|_{F}})}\deg f_{*}\mathcal{O}_{S}(D)  \text{.}
\]
 \item[(2).] If $f_{*}\mathcal{O}_{S}(D)$ is not semi-stable, we have the following cases:
 \begin{itemize}
  \item[(a).] If $t_{(D,f_{*}\mathcal{O}_{S}(D))} = k$: $D$ is nef and
     \[
     D^{2} \geq \frac{2\alpha_{(D, f_{*}\mathcal{O}_{S}(D))} D.F}{D.F + \alpha_{(D, f_{*}\mathcal{O}_{S}(D))}} \deg f_{*}\mathcal{O}_{S}(D)
\text{.}     \]
     \item[(b).] If $t_{(D,f_{*}\mathcal{O}_{S}(D))} = 1$: 
      \[
     D^{2} \geq \frac{2d_{1}}{r_{1}}\deg f_{*}\mathcal{O}_{S}(D) + 2 \epsilon^{*}D.Z_{1} - Z_{1}^{2} \text{.}
     \]
     \item[(c).] If $1 < t_{(D,f_{*}\mathcal{O}_{S}(D))} < k$: \[
     D^{2} \geq \frac{2\alpha_{(D,\mathcal{F}_{t_{(D,f_{*}\mathcal{O}_{S}(D))}})} d_{t_{(D,f_{*}\mathcal{O}_{S}(D))}}}{d_{t_{(D, f_{*}\mathcal{O}_{S}(D))}} + \alpha_{(D,\mathcal{F}_{t_{(D,f_{*}\mathcal{O}_{S}(D))}})}} \deg f_{*}\mathcal{O}_{S}(D) + 2\epsilon^{*}D.Z_{t_{(D,f_{*}\mathcal{O}_{S}(D))}} - Z_{t_{(D, f_{*}\mathcal{O}_{S}(D))}}^{2}\text{.} 
     \]
     \item[(d).] If $t_{(D, f_{*}\mathcal{O}_{S}(D))} = -\infty$:
     \[D^{2} \geq C_{(D, f_{*}\mathcal{O}_{S}(D))}. \deg f_{*}\mathcal{O}_{S}(D) \text{.}\]
 \end{itemize}
        
    \end{itemize}
    In addition, if $ D.F = \alpha_{(D, f_{*}\mathcal{O}_{S}(D))}(h^{0}(F, D_{|_{F}}) - 1)$, then independently of $t_{(D, f_{*}\mathcal{O}_{S}(D))}$ we have:
     \[
D^{2} \geq 2\frac{D.F}{h^{0}(F, {D}_{|_{F}})}\deg f_{*}\mathcal{O}_{S}(D)  \text{.}
\]
\end{corollary}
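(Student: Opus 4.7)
The plan is to apply Theorem \ref{main result} to the data $(D, f_{*}\mathcal{O}_{S}(D))$ in the non-semi-stable case and Remark \ref{remark4.6} in the semi-stable case, and then invoke the relative nefness of $D$ to dispose of the correction term $2\epsilon^{*}D.Z_{i} - Z_{i}^{2}$ appearing in each bound. A useful preliminary remark is that $Z_{\mathcal{F}_{i}}$ has no horizontal components (condition (4) after Corollary \ref{2.5}), so $d_{i} = (D - Z_{\mathcal{F}_{i}}).F = D.F$ for every $i$; in particular $r_{k} = h^{0}(F, D_{|_{F}})$.

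In case (1), since $f_{*}\mathcal{O}_{S}(D)$ is semi-stable, $k = 1$, and Remark \ref{remark4.6} supplies the desired bound plus the additive correction $2\epsilon^{*}D.Z_{1} - Z_{1}^{2}$. For cases (2)(a)--(d) I would read off the corresponding inequality of Theorem \ref{main result}, substituting $d_{t} = D.F$. In every instance the only remaining step is to show $2\epsilon^{*}D.Z_{i} - Z_{i}^{2} \geq 0$. The proof of Proposition \ref{propositionhit} uses only the decomposition $Z_{i} = \epsilon^{*}Z_{\mathcal{F}_{i}} + E$, the effectivity and fiber-support of $Z_{\mathcal{F}_{i}}$, and the relative nefness of $D$, all of which hold for every index $i$; hence $\epsilon^{*}D.Z_{i} \geq 0$ throughout. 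Similarly, the Hodge-index argument of Proposition \ref{proposition fixed part} applies verbatim to any $Z_{\mathcal{F}_{i}}$, since by definition it is the fixed part of a linear system and thus not movable, giving $Z_{i}^{2} \leq 0$ for all $i$. The final displayed assertion (under $d_{k} = \alpha_{(D,\mathcal{F})}(r_{k} - 1)$) then follows immediately from the last line of Theorem \ref{main result} after the same elimination.

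For the subsidiary claim in (2)(a) that $D$ itself is nef when $t_{(D,f_{*}\mathcal{O}_{S}(D))} = k$: the hypothesis $\mu_{k} \geq 0$ makes every Harder-Narasimhan graded piece $\mathcal{F}_{i}/\mathcal{F}_{i-1}$ semi-stable of nonnegative slope, hence nef on $C$, so by induction through the short exact sequences $0 \to \mathcal{F}_{i-1} \to \mathcal{F}_{i} \to \mathcal{F}_{i}/\mathcal{F}_{i-1} \to 0$ the full pushforward $f_{*}\mathcal{O}_{S}(D)$ is nef. Then $\mathcal{O}_{\mathbb{P}_{C}(f_{*}\mathcal{O}_{S}(D))}(1)$ is nef, so $\lambda^{*}\mathcal{O}(1) = \epsilon^{*}D - Z_{k}$ is nef on $\hat{S}$; combined with relative nefness of $D$ and the fact that $Z_{k}$ consists only of $\epsilon$-exceptional and vertical contributions, this forces $\epsilon^{*}D$, and hence $D$, to be nef.

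The hardest part, as this outline makes clear, is precisely the nefness of $D$ in case (2)(a): the nefness of the pushforward is formal, but passing from there to nefness of $D$ itself requires a careful check against horizontal curves via the factorization through $\mathbb{P}_{C}(f_{*}\mathcal{O}_{S}(D))$ and the explicit form of $Z_{k}$. The remainder of the argument is a clean combination of the main theorem with Propositions \ref{propositionhit} and \ref{proposition fixed part}, extended from $Z_{k}$ to arbitrary $Z_{i}$ by direct inspection of their proofs.
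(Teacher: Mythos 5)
Your overall route is the paper's: read the bounds off Theorem \ref{main result} (and Remark \ref{remark4.6} in the semi-stable case), discard the correction term via Propositions \ref{propositionhit} and \ref{proposition fixed part}, and get nefness of $D$ in case (2)(a) from nefness of $M_{k}$ together with $\epsilon^{*}D = M_{k} + Z_{k}$ and the vertical support of $Z_{\mathcal{F}_{k}}$. However, your preliminary remark is wrong as stated: condition (4) after Corollary \ref{2.5} (no horizontal components, because $A$ is sufficiently ample) applies to the fixed part of the \emph{complete} system $|D + f^{*}A|$, i.e.\ to $Z_{\mathcal{F}_{k}}$ for the data $(D, f_{*}\mathcal{O}_{S}(D))$, and not to the fixed parts $Z_{\mathcal{F}_{i}}$ of the proper sub-systems $L_{\mathcal{F}_{i}}$ attached to $\mathcal{F}_{i} \subsetneq f_{*}\mathcal{O}_{S}(D)$. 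Those sub-systems restrict on $F$ to proper sub-systems of $|D_{|_{F}}|$ and can perfectly well have horizontal fixed components; this is exactly why the paper proves only $d_{1} \leq \dots \leq d_{k}$ in Proposition \ref{positivity of degree} rather than equalities, and why Proposition \ref{propositionhit} is stated for $Z_{k}$ alone. Consequently neither ``$d_{i} = D.F$ for every $i$'' nor ``$\epsilon^{*}D.Z_{i} \geq 0$ for every $i$'' is justified (relative nefness of $D$ gives nothing against horizontal components), so your proposed substitution $d_{t} = D.F$ in cases (2)(b)--(c) and your claimed extension of Proposition \ref{propositionhit} to arbitrary $i$ are gaps. (The Hodge-index argument for $Z_{i}^{2} \leq 0$ does extend to any fixed part, so that half of your extension is fine.)

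Fortunately the statement you are proving does not need any of this: cases (2)(b) and (2)(c) retain their correction terms and are verbatim instances of Theorem \ref{main result}, while the cases where the correction is dropped and $D.F$ appears --- (1), (2)(a), (2)(d) and the final claim --- involve only $Z_{k}$ (with $Z_{1} = Z_{k}$ when $k = 1$), where $d_{k} = D.F$, $r_{k} = h^{0}(F, D_{|_{F}})$ and $2\epsilon^{*}D.Z_{k} - Z_{k}^{2} \geq 0$ hold exactly as in Propositions \ref{propositionhit} and \ref{proposition fixed part}. So the proof is repaired simply by restricting your auxiliary claims to the index $k$; with that correction your argument coincides with the paper's, including the nefness verification in (2)(a).
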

\begin{proof}
    First, if $f_{*}\mathcal{O}_{S}(D)$ is semi-stable, then by Remark \ref{remark4.6} we have: 
     \[
D^{2} \geq 2\frac{d_{1}}{r_{1}}\deg f_{*}\mathcal{O}_{S}(D) + 2 \epsilon^{*}D.{Z}_{1} - Z_{1}^{2} \text{.}
\]
Since $2 \epsilon^{*}D.{Z}_{1} - Z_{1}^{2} \geq 0$ by Proposition \ref{propositionhit} and Proposition \ref{proposition fixed part}, where $d_{1} = D.F$ and $r_{1} = \rk (f_{*}\mathcal{O}_{S}(D)) = h^{0}(F, {D}_{|_{F}})$, we deduce the desired inequality:
 \[
D^{2} \geq 2\frac{D.F}{h^{0}(F, {D}_{|_{F}})}\deg f_{*}\mathcal{O}_{S}(D)  \text{.}
\]
Now, if $f_{*}\mathcal{O}_{S}(D)$ is not  semi-stable, then we apply  Theorem \ref{main result} and we have the following cases:
\begin{itemize}
    \item[(a).] If $t_{(D,f_{*}\mathcal{O}_{S}(D))} = k$:
    \[
    D^{2} \geq \frac{2\alpha_{(D, f_{*}\mathcal{O}_{S}(D))} d_{k}}{d_{k} + \alpha_{(D, f_{*}\mathcal{O}_{S}(D))}} \deg f_{*}\mathcal{O}_{S}(D) + 2 \epsilon^{*}D.Z_{k} - Z_{k}^{2} \text{.} 
    \]
    \item[(b).] If $t_{(D,f_{*}\mathcal{O}_{S}(S))} = 1$:
  \[
  D^{2}  \geq \frac{2d_{1}}{r_{1}} \deg f_{*}\mathcal{O}_{S}(D)  + 2 \epsilon^{*}D.Z_{1} - Z_{1}^{2} \text{.} 
  \]  
    \item[(c).] If $1 < t_{(D, f_{*}\mathcal{O}_{S}(D))} < k$:
  \[ D^{2} \geq \frac{2\alpha_{(D,\mathcal{F}_{t_{(D,f_{*}\mathcal{O}_{S}(D))}})} d_{t_{(D,f_{*}\mathcal{O}_{S}(D))}}}{d_{t_{(D, f_{*}\mathcal{O}_{S}(D))}} + \alpha_{(D,\mathcal{F}_{t_{(D,f_{*}\mathcal{O}_{S}(D))}})}} \deg f_{*}\mathcal{O}_{S}(D) + 2\epsilon^{*}D.Z_{t_{(D,f_{*}\mathcal{O}_{S}(D))}} - Z_{t_{(D, f_{*}\mathcal{O}_{S}(D))}}^{2}\text{.} 
     \]
    \item[(d).] If $t_{(D,f_{*}\mathcal{O}_{S}(D))} = -\infty$:
       \[
    D^{2} \geq C_{(D,f_{*}\mathcal{O}_{S}(D))}.\deg f_{*}\mathcal{O}_{S}(D) + 2 \epsilon^{*}D.Z_{k} - Z_{k}^{2} \text{.} 
    \]
\end{itemize}
By Proposition \ref{propositionhit} and Proposition \ref{proposition fixed part}, we have that $2 \epsilon^{*}D.Z_{k} - Z_{k}^{2} \geq 0$. Furthermore,  $d_{k} = D.F$. So we deduce the desired result:
 \begin{itemize}
     \item If $t_{(D, f_{*}\mathcal{O}_{S}(D))} = k$, equivalently $\mu_{k} \geq 0$, then we have 
     \[
     D^{2} \geq \frac{2\alpha_{(D, f_{*}\mathcal{O}_{S}(D))} D.F}{D.F + \alpha_{(D, f_{*}\mathcal{O}_{S}(D))}} \deg f_{*}\mathcal{O}_{S}(D) \text{.}
     \]
     \item If $t_{(D,f_{*}\mathcal{O}_{S}(D))} = -\infty$, it is equivalent to  $\mu_{1} < 0$. Then
     \[D^{2} \geq C_{(D,f_{*}\mathcal{O}_{S}(D))}. \deg f_{*}\mathcal{O}_{S}(D) \text{.}\]
 \end{itemize}
 Also, if $\mu_{k} \geq 0$, then $f_{*}\mathcal{O}_{S}(D)$ is nef on $C$, so $M_{k}$ is nef on $\Hat{S}$. Since
 \[
 \Hat{D} = Z_{k} + M_{k} \text{,} \]
$Z_{\mathcal{F}_{k}}$ is supported in the fibers, and by assumption $D$ is relatively nef, thus we deduce that $D$ is nef.
\end{proof}
\begin{remark}
    In general, if $D$ is nef, we do not necessarily have $\mu_{k} \geq 0$. 
\end{remark}
\begin{example}\label{remarkslopepaper}
    In \cite[Thoerem 5]{remarkonslope}, the authors proved that if $D$ is a relatively nef divisor on $S$ such that $D_{|{_F}}$ is generated by global sections on a general  fiber $F$ of $f$,  $D_{|{_F}}$ is a special divisor on $F$, and 
    \begin{equation}\label{Condition}
    2h^{0}(F, D_{|_{F}}) - D.F - 1 > 0 \tag{P} \text{,}
    \end{equation}
    then
     \[
D^{2} \geq 2\frac{D.F}{h^{0}(F, D_{|_{F}})}\deg f_{*}\mathcal{O}_{S}(D)  \text{.}
\]
First, we remark that the condition \eqref{Condition} is equivalent to 
$$2h^{0}(F, D_{|_{F}}) - D.F - 1 = 1,$$
since by Clifford's Theorem: 
\[
D.F \geq 2(h^{0}(F, D_{|_{F}}) - 1) \text{.}
\]
Thus, we can assume  $D_{|{_F}}$ has a section and is not necessarily generated by global sections since we can always eliminate the horizontal fixed part of $D$. By Corollary \ref{secondmainresult}, we see that whether $D_{|{_F}}$ is special or nonspecial, we proved  the same inequality:
   \[
D^{2} \geq 2\frac{D.F}{h^{0}(F, {D}_{|_{F}})}\deg f_{*}\mathcal{O}_{S}(D)  \text{,}
\]
if the following more general condition holds:
\begin{equation}\label{conditionQ}
D.F = \alpha_{(D, f_{*}\mathcal{O}_{S}(D))} (h^{0}(F, D_{|_{F}}) - 1) \text{.}  \tag{Q}
\end{equation}
\end{example}
\begin{theorem}\label{invarianttheorem}
    let $f:S \to C$ be a fibred surface and $F$ its general fiber. If $D$ is a relatively effective and  relatively nef divisor, $D_{|_{F}}$ is nonspecial with $h^{0}(F, D_{|_{F}}) > g$, then 
    \[
    D^{2} \geq  2\frac{D.F}{h^{0}(F, {D}_{|_{F}})}\deg f_{*}\mathcal{O}_{S}(D)  \text{.}
    \]
    \begin{proof}
        By assumption, $D_{|_{F}}$ is nonspecial and $h^{0}(F, D_{|_{F}}) > g$. Thus, we have $\beta_{D} \leq 2$ and $\alpha_{(D, f_{*}\mathcal{O}_{S}(D))} = \beta_{D}$. Then the condition $(Q)$ is satisfied:
     \[
            D.F = \alpha_{(D, f_{*}\mathcal{O}_{S}(D))}(h^{0}(F, D_{|_{F}}) -1) \text{.}
     \]
     Finally, the desired inequality  follows from  Example \ref{remarkslopepaper}.
    \end{proof}
\end{theorem}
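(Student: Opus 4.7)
The plan is to reduce the theorem to the supplementary conclusion of Corollary \ref{secondmainresult}, which asserts that whenever the data $(D, f_{*}\mathcal{O}_{S}(D))$ satisfies the condition $(Q)$,
\[
D.F = \alpha_{(D, f_{*}\mathcal{O}_{S}(D))}(h^{0}(F, D_{|_{F}}) - 1),
\]
one obtains the target slope inequality directly. Thus the whole task is to verify $(Q)$ from the hypotheses.

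First, I would apply Riemann-Roch on the general fiber $F$. Nonspeciality of $D_{|_{F}}$ yields $h^{0}(F, D_{|_{F}}) = D.F - g + 1$, hence
\[
\beta_{D} = 1 + \frac{g}{h^{0}(F, D_{|_{F}}) - 1} = \frac{h^{0}(F, D_{|_{F}}) - 1 + g}{h^{0}(F, D_{|_{F}}) - 1} = \frac{D.F}{h^{0}(F, D_{|_{F}}) - 1},
\]
so already $\beta_{D}\,(h^{0}(F, D_{|_{F}}) - 1) = D.F$. Moreover, the hypothesis $h^{0}(F, D_{|_{F}}) > g$ implies $h^{0}(F, D_{|_{F}}) - 1 \geq g$, and hence $\beta_{D} \leq 1 + g/g = 2$.

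Next, I would identify $\alpha_{(D, f_{*}\mathcal{O}_{S}(D))}$ with $\beta_{D}$ by inspecting Definition \ref{alphadefinition}. Since $\beta_{D} \leq 2$, the only case where $\alpha$ could fail to equal $\beta_{D}$ is $\Hat{n}_{(D, f_{*}\mathcal{O}_{S}(D))} = k$, which would force $\alpha = 2$. I would rule this out using that the top Miyaoka divisor satisfies $N_{k}|_{F} = (D - Z_{\mathcal{F}_{k}})|_{F}$ with $Z_{\mathcal{F}_{k}}$ having no horizontal components; hence $N_{k}|_{F}$ inherits the nonspeciality of $D_{|_{F}}$. Consequently $\Hat{n}_{(D, f_{*}\mathcal{O}_{S}(D))}$ is either $-\infty$ or strictly less than $k$, and in both cases Definition \ref{alphadefinition} together with $\beta_{D} \leq 2$ gives $\alpha_{(D, f_{*}\mathcal{O}_{S}(D))} = \beta_{D}$.

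Combining these two steps, the identity $D.F = \beta_{D}(h^{0}(F, D_{|_{F}}) - 1)$ becomes exactly $(Q)$. Invoking the additional conclusion of Corollary \ref{secondmainresult} then yields the desired inequality. The only delicate point in this plan is the identification of $N_{k}|_{F}$ with $D_{|_{F}}$ at the level of cohomology, which hinges on recalling that $Z_{\mathcal{F}_{k}}$ is fiber-supported; once that is settled, everything else is a one-line arithmetic check.
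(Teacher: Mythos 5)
Your proposal is correct and follows essentially the same route as the paper: compute $\beta_{D}=D.F/(h^{0}(F,D_{|_{F}})-1)\leq 2$ via Riemann--Roch, identify $\alpha_{(D,f_{*}\mathcal{O}_{S}(D))}=\beta_{D}$, conclude condition $(Q)$, and invoke the supplementary conclusion of Corollary \ref{secondmainresult} through Example \ref{remarkslopepaper}. The only difference is that you spell out the steps the paper leaves implicit, namely the Riemann--Roch identity and the exclusion of $\Hat{n}_{(D,f_{*}\mathcal{O}_{S}(D))}=k$ via the fiber-supported fixed part $Z_{\mathcal{F}_{k}}$, which is consistent with Proposition \ref{propositionhit} and Proposition \ref{specialandnonspecial}.
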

\begin{proposition}\label{proposition pos}
    Let $f : S \to C$ be a fibred surface and $D$ be a nef divisor on $S$. Consider the data $(D, \mathcal{F})$. Then, if $t_{(D, \mathcal{F})} \geq 1$:
    \[
2\epsilon^{*}D.Z_{i} - Z_{i}^{2} \geq 0, \hspace{0.2cm}  \forall i; 1\leq i \leq t_{(D, \mathcal{F})} \text{.}\]     
\end{proposition}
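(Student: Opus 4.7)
The plan is to decompose $2\epsilon^{*}D.Z_{i} - Z_{i}^{2}$ into a sum of intersection numbers that are manifestly nonnegative, using the basic identity $\epsilon^{*}D = Z_{i} + M_{i}$ established in Section 2 and Proposition \ref{proposition3.3}.

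First, I would rewrite
\[
2\epsilon^{*}D.Z_{i} - Z_{i}^{2} = \epsilon^{*}D.Z_{i} + (\epsilon^{*}D - Z_{i}).Z_{i} = \epsilon^{*}D.Z_{i} + M_{i}.Z_{i},
\]
where the second equality uses $M_{i} = \epsilon^{*}(D - Z_{\mathcal{F}_{i}}) - E = \epsilon^{*}D - Z_{i}$ from the formula in Proposition \ref{proposition3.3}. It then suffices to check that each summand is nonnegative.

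For the first summand, $D$ is nef by hypothesis, so $\epsilon^{*}D$ is nef on $\hat{S}$, and since $Z_{i} = \epsilon^{*}Z_{\mathcal{F}_{i}} + E$ is effective (being the sum of the pullback of a fixed part and an exceptional divisor), one obtains $\epsilon^{*}D.Z_{i} \geq 0$. For the second summand, I would expand $M_{i} = N_{i} + \mu_{i} F$ to get
\[
M_{i}.Z_{i} = N_{i}.Z_{i} + \mu_{i}\, F.Z_{i}.
\]
Here $N_{i}$ is nef by Lemma \ref{Miyaoka:lemma} and $Z_{i}$ is effective, so $N_{i}.Z_{i} \geq 0$; the general fiber $F$ is nef (being base-point free) and $Z_{i}$ is effective, so $F.Z_{i} \geq 0$; and the hypothesis $1 \leq i \leq t_{(D,\mathcal{F})}$ gives $\mu_{i} \geq 0$ by the very definition of $t_{(D,\mathcal{F})}$ in Theorem \ref{main result}. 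Adding these yields $M_{i}.Z_{i} \geq 0$ and hence the desired inequality.

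There is no real obstacle: the argument is a one-line algebraic identity followed by three nefness/effectiveness checks. The only subtle point worth emphasizing in the write-up is that all three ingredients needed, namely (i) the nef pullback $\epsilon^{*}D$, (ii) the nef Miyaoka divisor $N_{i}$, and (iii) the sign of $\mu_{i}$, are used in an essential way, which is precisely why the statement requires both $D$ nef and $i \leq t_{(D,\mathcal{F})}$.
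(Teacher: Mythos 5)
Your proposal is correct and follows essentially the same route as the paper: the identity $2\epsilon^{*}D.Z_{i}-Z_{i}^{2}=(\epsilon^{*}D+M_{i}).Z_{i}$ together with nefness of $\epsilon^{*}D$ and of $M_{i}$ paired against the effective divisor $Z_{i}$. The only difference is that you spell out why $M_{i}.Z_{i}\geq 0$ via the decomposition $M_{i}=N_{i}+\mu_{i}F$ and the condition $\mu_{i}\geq 0$ for $i\leq t_{(D,\mathcal{F})}$, which the paper leaves implicit when asserting that $M_{i}$ is nef in this range.
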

\begin{proof}
    Recall that  $\epsilon^{*}D = M_{i} + Z_{i}$, thus $2\epsilon^{*}D.Z_{i} - Z_{i}^{2} = (\epsilon^{*}D + M_{i})Z_{i}$. However, $M_{i}$ and $\epsilon^{*}D$ are nef, $\forall i; 1\leq i \leq t_{(D,\mathcal{F})}$. So, we deduce the nonnegativity of $2\epsilon^{*}D.Z_{i} - Z_{i}^{2}$.
\end{proof}
\begin{corollary}[{Compare with \cite[Theorem 3.20]{stoppino}}]\label{comparewithstoppino}
    Let $f : S \to C$ be a fibred surface and $D$ be a nef divisor on $S$. Consider the data $(D, \mathcal{F})$ where $\mathcal{F} \subseteq f_{*}\mathcal{O}_{S}(D)$. Then
    \begin{itemize}
        \item[(1).] If $\mathcal{F}$ is semi-stable or $t_{(D,\mathcal{F})} = 1$, then
          \[
D^{2} \geq 2\frac{d_{1}}{r_{1}}\deg \mathcal{F}_{1} \geq 2\frac{d_{1}}{r_{1}}\deg \mathcal{F} \text{.}
\]
 \item[(2).] If $1 < t_{(D,\mathcal{F})} \leq k$, then
  \[
  D^{2} \geq\frac{2\alpha_{(D,\mathcal{F}_{t_{(D,\mathcal{F})}})} d_{t_{(D,\mathcal{F})}}}{d_{t_{(D,\mathcal{F})}} + \alpha_{(D,\mathcal{F}_{t_{(D,\mathcal{F})}})}} \deg \mathcal{F}_{t_{(D,\mathcal{F})}} 
 \geq\frac{2\alpha_{(D,\mathcal{F}_{t_{(D,\mathcal{F})}})} d_{t_{(D,\mathcal{F})}}}{d_{t_{(D,\mathcal{F})}} + \alpha_{(D,\mathcal{F}_{t_{(D,\mathcal{F})}})}} \deg \mathcal{F}
  \text{.} 
  \]  
    \end{itemize}
\end{corollary}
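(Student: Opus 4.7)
The plan is to deduce this corollary as an immediate consequence of Theorem~\ref{main result} (together with Remark~\ref{remark4.6} for the semi-stable case) by using Proposition~\ref{proposition pos} to discard the correction terms $2\epsilon^{*}D.Z_{i}-Z_{i}^{2}$ that encode the negativity of $D$. The nef hypothesis on $D$ is precisely what is needed to make these correction terms nonnegative at the relevant indices $i\leq t_{(D,\mathcal{F})}$, and so to pass from the inequalities of Theorem~\ref{main result} to the cleaner form asserted here.

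For the semi-stable case in~(1), I would first apply Remark~\ref{remark4.6} to obtain $D^{2}\geq \frac{2d_{1}}{r_{1}}\deg\mathcal{F}+2\epsilon^{*}D.Z_{1}-Z_{1}^{2}$, and note that $\mathcal{F}_{1}=\mathcal{F}$ since the Harder-Narasimhan filtration has length one. If $\mu_{1}\geq 0$, then $t_{(D,\mathcal{F})}=1$, so Proposition~\ref{proposition pos} applies at $i=1$ and yields $2\epsilon^{*}D.Z_{1}-Z_{1}^{2}\geq 0$. If instead $\mu_{1}<0$, then $\deg\mathcal{F}=r_{1}\mu_{1}<0$ while $d_{1}\geq 0$ by Proposition~\ref{positivity of degree}, so $\frac{2d_{1}}{r_{1}}\deg\mathcal{F}\leq 0$; since $D$ is nef one has $D^{2}\geq 0$, and the claim follows trivially. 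In the other sub-case of~(1), where $\mathcal{F}$ is not semi-stable and $t_{(D,\mathcal{F})}=1$, Theorem~\ref{main result}(1) gives the required inequality up to the correction $2\epsilon^{*}D.Z_{1}-Z_{1}^{2}$, which I would again drop using Proposition~\ref{proposition pos} at $i=1$.

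For case~(2), $\mathcal{F}$ is not semi-stable and $1<t_{(D,\mathcal{F})}\leq k$, so Theorem~\ref{main result}(2) is directly available. Applying it and then using Proposition~\ref{proposition pos} at $i=t_{(D,\mathcal{F})}$ removes the term $2\epsilon^{*}D.Z_{t_{(D,\mathcal{F})}}-Z_{t_{(D,\mathcal{F})}}^{2}$; both stated inequalities then follow, the second one being built into Theorem~\ref{main result}(2) via $\deg\mathcal{F}_{t_{(D,\mathcal{F})}}\geq \deg\mathcal{F}$ (which in turn holds because $\mu_{i}<0$ for $i>t_{(D,\mathcal{F})}$). The argument is essentially mechanical once the two ingredients are in hand; the only mild subtlety is the semi-stable sub-case $\mu_{1}<0$, which lies outside the range where Proposition~\ref{proposition pos} applies but is dispatched directly by the bound $D^{2}\geq 0$ coming from the nefness of $D$.
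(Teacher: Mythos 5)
Your proposal is correct and follows essentially the same route as the paper, whose entire proof is ``Apply Proposition \ref{proposition pos} and Theorem \ref{main result}'': use the nefness of $D$ to discard the nonnegative correction terms $2\epsilon^{*}D.Z_{i}-Z_{i}^{2}$ at $i\leq t_{(D,\mathcal{F})}$. Your only addition is to spell out the semi-stable sub-case with $\mu_{1}<0$ (where Proposition \ref{proposition pos} is unavailable and the inequality holds trivially since $D^{2}\geq 0$ while the right-hand side is nonpositive), a detail the paper's one-line proof leaves implicit.
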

\begin{proof}
    Apply Proposition \ref{proposition pos} and  Theorem \ref{main result}.
\end{proof}
\section{Examples and applications}
\begin{example} Let $D = K_{S/C}$ be the relative canonical divisor of a fibred surface $f \colon S \rightarrow  C$ with $g(F) \geq 2$. Thus, by \cite{Fujita1}, $f_{*}\omega_{S/C}$ is a nef vector bundle on $C$. Also, $\Hat{n}_{(K_{S/C}, f_{*}\omega_{S/C}) } = k$ since $D_{|_{F}} = K_{F}$ is a special divisor on $F$.  This implies $\alpha_{(K_{S/C}, f_{*}\omega_{S/C})} = 2$. We also have $t_{(K_{S/C}, f_{*}\omega_{S/C})} = k$, where $k$ is the length of the Harder-Narasimhan filtration of $f_{*}\omega_{S/C}$, and $d_{k} = 2g-2$ where $g = g(F)$. Then, by Remark \ref{remark4.6} and Theorem \ref{main result}, we have:
\[
K_{S/C}^{2} \geq 4\frac{g-1}{g} \deg f_{*}\omega_{S/C} + 2\epsilon^{*}K_{S/C}.Z_{k} - Z_{k}^{2} \text{.}
\]
Recall that $$2\epsilon^{*}K_{S/C}.Z_{k} - Z_{k}^{2} = 2K_{S/C}.Z_{\mathcal{F}_{k}} - Z_{\mathcal{F}_{k}}^{2} - E^{2} \text{,}$$
where $E$ is the exceptional divisor of $\epsilon$ (Proposition \ref{proposition3.3}). Here $\mathcal{F}_{k} = \mathcal{F} = f_{*}\omega_{S/C}$. In \cite[Example 2.1]{Barjastoppinostability},
the authors considered the case where $f$ is a relatively minimal nodal fibration. They calculate the term $$2K_{S/C}.Z_{\mathcal{F}_{k}} - Z_{\mathcal{F}_{k}}^{2} - E^{2} \text{,}$$ and  prove that  if  $n\geq 1$ is the total number of 
disconnecting nodes contained in the fibres, we have 
\[
2K_{S/C}.Z_{\mathcal{F}_{k}} - Z_{\mathcal{F}_{k}}^{2} - E^{2} \geq n \text{.}
\]
Therefore,
\[
K_{S/C}^{2} \geq 4\frac{g-1}{g} \deg f_{*}\omega_{S/C} + n \text{.}
\]
This implies  if $f$ is relatively minimal and  $$K_{S/C}^{2} = 4\frac{g-1}{g} \deg f_{*}\omega_{S/C} \text{,}$$ then $f$ is never a relatively minimal nodal fibration with at least $1$ disconnected node. 
\par Furthermore, we remark  that if $f$ is relatively minimal, then by Corollary \ref{secondmainresult}, $K_{S/C}$ is nef and therefore $$2K_{S/C}.Z_{\mathcal{F}_{k}} - Z_{\mathcal{F}_{k}}^{2} - E^{2} \geq 0 \text{.}$$ Moreover, to have $$K_{S/C}^{2} = 4\frac{g-1}{g} \deg f_{*}\omega_{S/C} \text{,}$$ it must be that $$2K_{S/C}.Z_{\mathcal{F}_{k}} - Z_{\mathcal{F}_{k}}^{2} - E^{2} = 0\text{.}$$ Then $|K_{S/C} + f^{*}\mathcal{O}(A)|$  has no fixed part and is base point free for  a sufficiently ample divisor on $C$. 
\par In general, if $f$ is relatively minimal, we always have the original Xiao's result  \cite{xiaogang}:
\[
K_{S/C}^{2} \geq 4\frac{g-1}{g} \deg f_{*}\omega_{S/C} \text{.}
\]
\end{example}
\begin{example}\label{pluricanonicalexample}
Let $D = mK_{S/C}$ be the relative  pluricanonical divisor of a fibred surface $f \colon S \rightarrow C$, with $g=g(F) \geq 2$, and $m \geq 2$. It is well known that $f_{*}\omega_{S/C}^{\otimes m}$ is a nef vector bundle on $C$, see \cite[Theorem 1.3]{eckartpositivity} for instance. If $K_{S/C}$ is relatively nef, then by Corollary \ref{secondmainresult}, we have the following cases:
\begin{itemize}
    \item If $f_{*}\omega_{S/C}^{\otimes m}$ is semi-stable:
    \[
    m^{2}K_{S/C}^{2} \geq  \frac{4m}{2m-1} \deg f_{*}\omega_{S/C}^{\otimes m}
    \text{,}\]
    since $$\rk(f_{*}\omega_{S/C}^{\otimes m}) = (2m-1)(g-1)\text{.}$$ 
    Therefore,
      \[
    K_{S/C}^{2} \geq  \frac{4}{m(2m-1)} \deg f_{*}\omega_{S/C}^{\otimes m}
   \text{.} \]
    \item  If $f_{*}\omega_{S/C}^{\otimes m}$ is not semi-stable, then using the fact that $mK_{F}$ is a nonspecial divisor, we see that 
    \[
    \Hat{n}_{(mK_{S/C}, f_{*}\omega_{S/C}^{\otimes m})} < k \text{,}
    \]
and
\[\alpha_{(mK_{S/C}, f_{*}\omega_{S/C}^{\otimes m})} = 1 + \frac{g}{h^{0}(F, mK_{F})-1} \text{,}\]  
    since $$h^{0}(F, mK_{F}) > g \text{.}$$ 
    Moreover,
    \[
    t_{(mK_{S/C}, f_{*}\omega_{S/C}^{\otimes m})} = k \text{,}    \]
    because $f_{*}\omega_{S/C}^{\otimes m}$ is a nef vector bundle on $C$. Then,
    \[
    m^{2}K_{S/C} \geq \frac{2\alpha_{(mK_{S/C}, f_{*}\omega_{S/C}^{\otimes m})} d_{k}}{d_{k} + \alpha_{(mK_{S/C}, f_{*}\omega_{S/C}^{\otimes m})}} 
  \deg f_{*}\omega_{S/C}^{\otimes m}\text{.}\]
  Recall that $d_{k} = 2m(g-1)$ and $h^{0}(F, mK_{F}) = (2m-1)(g-1)$, thus
  \[
  \frac{2\alpha_{(mK_{S/C}, f_{*}\omega_{S/C}^{\otimes m})} d_{k}}{m^{2}(d_{k} + \alpha_{(mK_{S/C}, f_{*}\omega_{S/C}^{\otimes m})})}= \frac{4}{m(2m-1)} \text{.}
  \]
  Therefore,
  \[ K_{S/C}^{2} \geq \frac{4}{m(2m-1)} \deg f_{*}\omega_{S/C}^{\otimes m}
\text{.}  \]
\end{itemize}
In this computations, we see that whether $f_{*}\omega_{S/C}^{\otimes m}$ is semi-stable or not, we have the same lower bound. The reason is that $$d_{k} = \alpha_{(mK_{S/C}, f_{*}\omega_{S/C}^{\otimes m} )}.(r_{k} - 1) \text{,}$$ as explained in the last item of  Corollary \ref{secondmainresult}. However,  in general, for  data $(D, \mathcal{F})$ where $\mathcal{F} \subsetneq f_{*}\mathcal{O}_{S}(D)$, the constant $\frac{2\alpha_{(D,\mathcal{F})}d_{k}}{d_{k} + \alpha_{(D, \mathcal{F})}}$ is different from $\frac{2d_{k}}{r_{k}}$ because the inequality $d_{k} > \alpha_{(D, \mathcal{F})}(r_{k} -1)$ can  well happen.
\end{example}
\begin{example}\label{relativeexample}
    Now, let $f: S \to C$ be a fibred surface and $F$ its general fiber with $g(F) \geq 2$. We take $D = K_{S/C} + L$ such that $L$ is nef and relatively big with $L.F >1$. We know that $f_{*}\mathcal{O}_{S}(D)$ is a nef vector bundle on $C$ with  $$\rk(f_{*}\mathcal{O}_{S}(D)) = g - 1 + L.F \neq 0 \text{.}$$ If $D$ is a relatively nef divisor, then by Corollary \ref{secondmainresult}, we have the following lower bound for $(K_{S/C} + L)^{2}$:
        \[
        (K_{S/C} + L)^{2} \geq 2\left(1 + \frac{g-1}{g - 1 + L.F}\right)\deg f_{*}(\omega_{S/C} \otimes \mathcal{O}(L) )    \text{,}\]
        because we know the following information for the data $(D, f_{*}\mathcal{O}_{S}(D))$: 
        $$d_{k} = \alpha_{(D,f_{*}\mathcal{O}_{S}(D))}(r_{k} -1), \hspace{0.2cm} \Hat{n}_{(D, f_{*}\mathcal{O}_{S}(D))} < k \text{,} \hspace{0.2cm}
        \alpha_{(D,f_{*}\mathcal{O}_{S}(D))} = 1 + \frac{g}{g-2+L.F} \text{,}
        $$
also the quantity $t_{(D,f_{*}\mathcal{O}_{S}(D))}$ is maximal, this means $t_{(D, f_{*}\mathcal{O}_{S}(D))} = k \text{.}$ 
\end{example}
\begin{example}\label{exampleusingfujitadecomposition}
We give a trivial example in which we can see that $t_{(D, f_{*}\mathcal{O}_{S}(D))} = -\infty$ can happen. 
Let $f:S \to C$ be a fibred surface and $F$ its general fiber with $g(F) = 2$. Assume that $K_{S/C}$ is a nef divisor and the second Fujita decomposition (Theorem \ref{Second:Fujita:decomp:thm}) of $f_{*}\omega_{S/C}$ is not trivial. Then there exists an ample line bundle $\mathcal{A}$ on $C$ and a flat line bundle $\mathcal{U}$ such that
\[
f_{*}\omega_{S/C} = \mathcal{A} \oplus \mathcal{U} \text{.}
\]
Now, let $D = K_{S/C} + f^{*}\mathcal{M}$
for a  sufficiently negative divisor $\mathcal{M}$ on $C$ such that $\deg(\mathcal{A} \otimes \mathcal{O}(\mathcal{M})) < 0$. 
In this case, $D$ is a  relatively nef divisor on $S$ such that $D_{|_{F}} = K_{F}$ and the bundle $f_{*}\mathcal{O}_{S}(D)$ decomposes into two parts: 
\[
f_{*}\mathcal{O}_{S}(D) = \mathcal{A}\otimes \mathcal{M} \oplus \mathcal{U} \otimes \mathcal{M} \text{.}
\] 
The Harder-Narasimhan filtration of $f_{*}\mathcal{O}_{S}(D)$ is the following: 
\[
0 \subsetneq \mathcal{A} \otimes \mathcal{M} \subsetneq f_{*}\mathcal{O}_{S}(D) \text{.}
\]
We consider the data $(D, f_{*}\mathcal{O}_{S}(D))$. Since $D_{|_{F}} =  K_{F}$, we see that  $D_{|_{F}}$ is special,  $\Hat{n}_{(D, f_{*}\mathcal{O}_{S}(D))} = 2$, and $\alpha_{(D,f_{*}\mathcal{O}_{S}(D))} =2$. Since $g(F) = 2$, then we have $d_{2}= 2$. Also, clearly, we have $t_{(D, f_{*}\mathcal{O}_{S}(D))} = -\infty$. Applying Corollary \ref{secondmainresult}, we deduce the following inequality: 
\[
D^{2} \geq C_{(D, f_{*}\mathcal{O}_{S}(D))} \deg f_{*}\mathcal{O}_{S}(D) \text{.}
\]
We know that  $C_{(D, f_{*}\mathcal{O}_{S}(D))} = 2$, so we replace $C_{(D, f_{*}\mathcal{O}_{S}(D))}$ by $2$ in the above inequality:
\[
D^{2} \geq 2 \deg f_{*}\mathcal{O}_{S}(D) \text{.}
\]
Additionally, we remark that $d_{k} = \alpha_{(D, f_{*}\mathcal{O}_{S}(D))}(r_{k} -1)$. Thus, again by Corollary \ref{secondmainresult} or Example \ref{remarkslopepaper}, we deduce the same lower bound. 
\end{example}
\begin{proposition}\label{pluricaonicalproposition}
    Let $f : S \to C$ be a fibred surface and $F$ its general fiber with $g(F) \geq 2$. Then
    \[
    K_{S/C}^{2} = \frac{2}{m(m-1)} \left( \deg f_{*}\omega_{S/C}^{\otimes m} - \deg f_{*}\omega_{S/C}\right), \hspace{0.2cm} \forall m\geq 2 \text{.}
    \]
\end{proposition}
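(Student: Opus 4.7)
The plan is to compute $\deg f_{*}\omega_{S/C}^{\otimes m}$ in closed form by combining Riemann--Roch on $S$, the Leray spectral sequence, and Riemann--Roch on $C$, and then to subtract the identity for $m=1$ already recorded in the introduction.

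First, I would apply Riemann--Roch on the surface to $D = mK_{S/C}$. Writing $K_{S} = K_{S/C} + f^{*}K_{C}$, one has $K_{S/C}.K_{S} = K_{S/C}^{2} + (2b-2)(2g-2)$ since $K_{S/C}.F = 2g-2$. A short calculation gives
\[
\chi(S, mK_{S/C}) \;=\; \chi(\mathcal{O}_{S}) + \tfrac{m(m-1)}{2}K_{S/C}^{2} - 2m(b-1)(g-1).
\]
Next, I would use the Leray spectral sequence for $f$, namely $\chi(S, mK_{S/C}) = \chi(C, f_{*}\omega_{S/C}^{\otimes m}) - \chi(C, R^{1}f_{*}\omega_{S/C}^{\otimes m})$. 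The key observation is that $R^{1}f_{*}\omega_{S/C}^{\otimes m} = 0$ for $m \geq 2$ when $g \geq 2$: by relative Serre duality this sheaf is dual to $f_{*}\omega_{S/C}^{\otimes(1-m)}$, whose restriction to a general fiber is $H^{0}(F, (1-m)K_{F})$, and this vanishes because $(1-m)K_{F}$ has negative degree. Being torsion-free of generic rank zero on the smooth curve $C$, the sheaf $f_{*}\omega_{S/C}^{\otimes(1-m)}$ is zero, whence so is $R^{1}f_{*}\omega_{S/C}^{\otimes m}$.

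Then I would apply Riemann--Roch on $C$. Since $\rk f_{*}\omega_{S/C}^{\otimes m} = h^{0}(F, mK_{F}) = (2m-1)(g-1)$ for $m \geq 2$, we obtain
\[
\chi(C, f_{*}\omega_{S/C}^{\otimes m}) \;=\; \deg f_{*}\omega_{S/C}^{\otimes m} + (2m-1)(g-1)(1-b).
\]
Equating the two expressions for $\chi(S, mK_{S/C})$ and collecting the $(g-1)(b-1)$ terms yields
\[
\deg f_{*}\omega_{S/C}^{\otimes m} \;=\; \chi(\mathcal{O}_{S}) + \tfrac{m(m-1)}{2}K_{S/C}^{2} - (g-1)(b-1).
\]

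Finally, I would invoke the formula $\deg f_{*}\omega_{S/C} = \chi(\mathcal{O}_{S}) - (g-1)(b-1)$ already recorded in the introduction (this is the $m=1$ specialization of the same argument, where the nonvanishing of $R^{1}f_{*}\omega_{S/C} = \mathcal{O}_{C}$ contributes the extra $1-b$). Subtracting gives $\deg f_{*}\omega_{S/C}^{\otimes m} - \deg f_{*}\omega_{S/C} = \tfrac{m(m-1)}{2}K_{S/C}^{2}$, from which the claimed identity follows by dividing by $m(m-1)/2$. The only delicate step is justifying $R^{1}f_{*}\omega_{S/C}^{\otimes m}=0$ for $m\geq 2$; everything else is bookkeeping to make the various $(g-1)(b-1)$ corrections cancel.
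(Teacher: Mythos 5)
Your proof is correct and rests on the same two pillars as the paper's, with the first pillar unpacked by hand rather than cited. The paper quotes the relative Riemann--Roch formula of Arbarello--Cornalba--Griffiths--Harris, $\deg f_{!}\mathcal{L} = \deg f_{*}\mathcal{L} - \deg \mathcal{R}^{1}f_{*}\mathcal{L} = \tfrac{L(L-K_{S/C})}{2} + \deg f_{*}\omega_{S/C}$, and applies it to $\mathcal{L}=\omega_{S/C}^{\otimes m}$; your combination of Riemann--Roch on $S$, the Leray spectral sequence, and Riemann--Roch on $C$, followed by inserting $\deg f_{*}\omega_{S/C} = \chi(\mathcal{O}_{S}) - (g-1)(b-1)$ and subtracting, is exactly a derivation of that formula (your intermediate identity $\deg f_{*}\omega_{S/C}^{\otimes m} = \chi(\mathcal{O}_{S}) + \tfrac{m(m-1)}{2}K_{S/C}^{2} - (g-1)(b-1)$ checks out), so what you gain is self-containedness at the cost of bookkeeping. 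Both proofs then stand or fall with the same input, the vanishing $\mathcal{R}^{1}f_{*}\omega_{S/C}^{\otimes m}=0$, and here your justification is parallel to, not stronger than, the paper's: unconditional relative duality only gives $(\mathcal{R}^{1}f_{*}\omega_{S/C}^{\otimes m})^{\vee} \simeq f_{*}\omega_{S/C}^{\otimes(1-m)}$, and passing from the vanishing of $f_{*}\omega_{S/C}^{\otimes(1-m)}$ (which you establish correctly, as a torsion-free sheaf of generic rank zero) to the vanishing of $\mathcal{R}^{1}f_{*}\omega_{S/C}^{\otimes m}$ itself needs that the latter has no torsion --- which is precisely the ``known to be torsion free'' assertion made at the same spot in the paper's proof. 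That is the genuinely delicate point you rightly flagged: torsion in $\mathcal{R}^{1}f_{*}\omega_{S/C}^{\otimes m}$ can occur when a fibre contains a $(-1)$-curve (blowing up a point of a fibre leaves both $f_{*}\omega_{S/C}$ and $f_{*}\omega_{S/C}^{\otimes m}$ unchanged while $K_{S/C}^{2}$ drops by one, so the displayed identity is really an identity for fibrations with no $(-1)$-curves in fibres), so some relative minimality is implicitly being used by both arguments. With that shared caveat, your proof matches the paper's in structure and in level of rigor.
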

\begin{remark}
In the setting of Proposition \ref{pluricaonicalproposition}, in particular: 
      \[
    K_{S/C}^{2} = \deg f_{*}\omega_{S/C}^{\otimes 2} - \deg f_{*}\omega_{S/C}  \text{.}
    \]
\end{remark}
\begin{proof}
Recall that if $S \to C$ is a fibred surface, then for any line bundle $\mathcal{L} = \mathcal{O}_{S}(L)$ on $S$, we have the following Grothendieck-Riemann-Roch formula \cite[page 333]{ACGH}: 
\[
\deg f_{!}\mathcal{L} = \deg f_{*}\mathcal{L} - \deg \mathcal{R}^{1}f_{*}\mathcal{L} = \frac{L(L-K_{S/C})}{2} + \deg f_{*}\omega_{S/C} \text{.}
\]
In particular, we apply the above formula to the relative pluricanonical bundle $\omega_{S/C}^{\otimes m}$, thus we obtain
\[
\deg f_{*}\omega_{S/C}^{\otimes m} - \deg \mathcal{R}^{1}f_{*}\omega_{S/C}^{\otimes m} = \frac{m(m-1)K_{S/C}^{2}}{2} + \deg f_{*}\omega_{S/C} \text{.}
\]
But  $\mathcal{R}^{1}f_{*}\omega_{S/C}^{\otimes m} = 0$ because $\mathcal{R}^{1}f_{*}\omega_{S/C}^{\otimes m}$ is known to be torsion free and
\[
\rk \mathcal{R}^{1}f_{*}\omega_{S/C}^{\otimes m} = h^{1}(F, \omega_{F}^{\otimes m}) = h^{0}(F, \omega_{F}^{\otimes (1-m)}) = 0,
\]
since $g(F) \geq 2$ by assumption. Thus, we obtain the  desired formula: 
     \[
    K_{S/C}^{2} = \frac{2}{m(m-1)} \left( \deg f_{*}\omega_{S/C}^{\otimes m} - \deg f_{*}\omega_{S/C}\right), \hspace{0.2cm} \forall m\geq 2 \text{.}
    \]
\end{proof}
\begin{remark}\label{xiaoremark}
    Using  Noether's Formula, Xiao \cite[Theorem 2]{xiaogang} remarked that
    \[
    K_{S/C}^{2} \leq 12 \deg f_{*}\omega_{S/C} \text{.}
    \]
\end{remark}
\begin{proposition} \label{proposition5.7}
    Let $f : S \to C$ be a fibred surface and $F$ its general fiber with $g(F) \geq 2$. Then
    \[
    K_{S/C}^{2} \leq \frac{12}{6m(m-1) + 1}  \deg f_{*}\omega_{S/C}^{\otimes m}, \hspace{0.2cm} \forall m\geq 2 \text{.}
    \]
\end{proposition}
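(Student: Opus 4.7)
The plan is to combine the identity from Proposition \ref{pluricaonicalproposition} with the classical upper bound recalled in Remark \ref{xiaoremark}. Concretely, Proposition \ref{pluricaonicalproposition} lets us express $\deg f_{*}\omega_{S/C}$ in terms of $\deg f_{*}\omega_{S/C}^{\otimes m}$ and $K_{S/C}^{2}$, and then substituting into the Xiao-type upper bound $K_{S/C}^{2} \leq 12 \deg f_{*}\omega_{S/C}$ eliminates $\deg f_{*}\omega_{S/C}$ and yields a one-sided inequality between $K_{S/C}^{2}$ and $\deg f_{*}\omega_{S/C}^{\otimes m}$.

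First I would rewrite the formula of Proposition \ref{pluricaonicalproposition} as
\[
\deg f_{*}\omega_{S/C} = \deg f_{*}\omega_{S/C}^{\otimes m} - \frac{m(m-1)}{2} K_{S/C}^{2},
\]
valid for every $m \geq 2$. Then I would plug this expression into the inequality recalled in Remark \ref{xiaoremark}, namely $K_{S/C}^{2} \leq 12 \deg f_{*}\omega_{S/C}$, to obtain
\[
K_{S/C}^{2} \leq 12 \deg f_{*}\omega_{S/C}^{\otimes m} - 6 m(m-1) K_{S/C}^{2}.
\]
Collecting the $K_{S/C}^{2}$ terms on the left hand side gives $(1 + 6 m(m-1)) K_{S/C}^{2} \leq 12 \deg f_{*}\omega_{S/C}^{\otimes m}$, which is exactly the claim once divided by the positive integer $1 + 6 m(m-1)$.

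There is really no obstacle here: the argument is an algebraic manipulation once the two inputs (Proposition \ref{pluricaonicalproposition} and Xiao's bound) are available. The only point to be careful about is that the coefficient $1 + 6m(m-1)$ is strictly positive for all $m \geq 2$, so the division is harmless and preserves the direction of the inequality. It is worth remarking that for $m = 2$ this recovers $K_{S/C}^{2} \leq \tfrac{12}{13} \deg f_{*}\omega_{S/C}^{\otimes 2}$, which is the strongest case and is consistent with the relation $K_{S/C}^{2} = \deg f_{*}\omega_{S/C}^{\otimes 2} - \deg f_{*}\omega_{S/C}$ from the remark following Proposition \ref{pluricaonicalproposition}.
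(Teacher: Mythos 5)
Your argument is correct and is essentially the same as the paper's own proof: both rearrange the identity of Proposition \ref{pluricaonicalproposition} to express $\deg f_{*}\omega_{S/C}$ in terms of $\deg f_{*}\omega_{S/C}^{\otimes m}$ and $K_{S/C}^{2}$, substitute into the bound $K_{S/C}^{2} \leq 12 \deg f_{*}\omega_{S/C}$ of Remark \ref{xiaoremark}, and divide by the positive coefficient $6m(m-1)+1$. No gaps.
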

\begin{proof}
We recall that in Proposition \ref{pluricaonicalproposition}, we proved that 
\[
    K_{S/C}^{2} = \frac{2}{m(m-1)} \left( \deg f_{*}\omega_{S/C}^{\otimes m} - \deg f_{*}\omega_{S/C}\right), \hspace{0.2cm} \forall m\geq 2
    \]
    which implies
    \[
     \frac{m(m-1)}{2}K_{S/C}^{2} = \deg f_{*}\omega_{S/C}^{\otimes m} - \deg f_{*}\omega_{S/C}, \hspace{0.2cm} \forall m\geq 2\text{.}   \]
    Thus,
\begin{equation}\label{equality5.1} 
\deg f_{*}\omega_{S/C} = \deg f_{*}\omega_{S/C}^{\otimes m} - \frac{m(m-1)}{2}K_{S/C}^{2}, \hspace{0.2cm} \forall m\geq 2 \text{.}
\end{equation}
Now, by Remark \ref{xiaoremark}, we have
  \[
    K_{S/C}^{2} \leq 12 \deg f_{*}\omega_{S/C} \text{.}
    \]
Combining this inequality with equality (\ref{equality5.1}), it follows that
\[
K_{S/C}^{2} \leq 12 \left(\deg f_{*}\omega_{S/C}^{\otimes m} - \frac{m(m-1)}{2}K_{S/C}^{2}\right), \hspace{0.2cm} \forall m\geq 2
\text{,}\]
which implies
\[ 
K_{S/C}^{2} \leq 12 \deg f_{*}\omega_{S/C}^{\otimes m} - 6m(m-1)K_{S/C}^{2}, \hspace{0.2cm} \forall m\geq 2 \text{.}
\]
Therefore,
\[
(6m(m-1)+1)K_{S/C}^{2} \leq 12 \deg f_{*}\omega_{S/C}^{\otimes m}, \hspace{0.2cm} \forall m\geq 2 \text{,}
\]
which further simplifies to the desired inequality:
\[
K_{S/C}^{2} \leq \frac{12}{6m(m-1)+1}\deg f_{*}\omega_{S/C}^{\otimes m}, \hspace{0.2cm} \forall m\geq 2 \text{.}
\]
\end{proof}
\begin{remark}
    If $K_{S/C}^{2} > 0$ and $g(F) \geq 2$, then  $\deg f_{*}\omega_{S/C}^{\otimes m} > 0$, $\forall m \geq 2$ \text{.}
\end{remark}
\begin{lemma}\label{Lemma 5.8}
    Let $f : S \to C$ be a fibred surface, $F$ its general fiber with $g(F) \geq 2$, and  $\mathcal{F} \subseteq f_{*}\omega_{S/C}^{\otimes m}$ with $\deg \mathcal{F} \geq 0$, $m \geq 2$. Then
    \[
    \mu(\mathcal{F}) \leq \frac{6m}{(6m(m-1) + 1)(g-1)} \deg f_{*}\omega_{S/C}^{\otimes m}\text{,}
     \]
      where $\mu(\mathcal{F}) = \frac{\deg \mathcal{F}}{\rk \mathcal{F}}$.
\end{lemma}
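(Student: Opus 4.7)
The plan is to reduce the lemma to the intermediate estimate $\mu(\mathcal{F}) \leq \frac{m K_{S/C}^{2}}{2(g-1)}$ and then invoke Proposition \ref{proposition5.7}. Once this estimate is in hand, substituting the bound $K_{S/C}^{2} \leq \frac{12}{6m(m-1)+1} \deg f_{*}\omega_{S/C}^{\otimes m}$ from Proposition \ref{proposition5.7} immediately yields
\[
\mu(\mathcal{F}) \leq \frac{m}{2(g-1)} \cdot \frac{12 \deg f_{*}\omega_{S/C}^{\otimes m}}{6m(m-1)+1} = \frac{6m}{(g-1)(6m(m-1)+1)} \deg f_{*}\omega_{S/C}^{\otimes m},
\]
which is the conclusion of the lemma.

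To prove the intermediate estimate, I would first reduce to the case where $\mathcal{F}$ is the maximal destabilizing subsheaf of $f_{*}\omega_{S/C}^{\otimes m}$: for any $\mathcal{F}$ one has $\mu(\mathcal{F}) \leq \mu_{\max}(\mathcal{F}) \leq \mu_{\max}(f_{*}\omega_{S/C}^{\otimes m}) = \mu(\mathcal{G})$, where $\mathcal{G}$ is the first step of the Harder--Narasimhan filtration of $f_{*}\omega_{S/C}^{\otimes m}$; by construction $\mathcal{G}$ is semistable. I would then apply Remark \ref{remark4.6} to the data $(mK_{S/C}, \mathcal{G})$ to obtain
\[
m^{2} K_{S/C}^{2} \geq 2 d_{1}\, \mu(\mathcal{G}) + \bigl(2 \epsilon^{*}(mK_{S/C}).Z_{1} - Z_{1}^{2}\bigr),
\]
with $d_{1} = M_{1}.F$ and $Z_{1} = \epsilon^{*}Z_{\mathcal{G}} + E$. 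To handle the correction term, I would decompose it as $2m K_{S/C}.Z_{\mathcal{G}} - Z_{\mathcal{G}}^{2} - E^{2}$ and argue its nonnegativity by arranging (as in Remark \ref{2.6} and Proposition \ref{propositionhit}) that $Z_{\mathcal{G}}$ is supported on fibers, so that $K_{S/C}.Z_{\mathcal{G}} \geq 0$ (since each fiber gives $K_{S/C}.F_{p} = 2(g-1) > 0$), while $Z_{\mathcal{G}}^{2} \leq 0$ by Zariski's lemma exactly as in Proposition \ref{proposition fixed part}, and $E^{2} \leq 0$ because $E$ is exceptional. This would yield $\mu(\mathcal{G}) \leq \frac{m^{2} K_{S/C}^{2}}{2 d_{1}}$, and a lower bound $d_{1} \geq m(g-1)$ would complete the argument.

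The main obstacle I anticipate is twofold. First, verifying that the fixed part $Z_{\mathcal{G}}$ of the maximal destabilizing subsheaf can be taken vertical requires controlling the base locus on the general fiber, which is not automatic for a strict subsheaf (the property was stated only for the full direct image). Second, and more seriously, the numerical bound $d_{1} \geq m(g-1)$ has to be extracted from the data of $\mathcal{G}$: for $\mathcal{G}$ of rank at least two this should follow from Clifford's theorem or Riemann--Roch applied to the moving part $M_{1}|_{F}$, combined with the fact that $\mathcal{G}|_{F}$ spans a sufficiently large subspace of $H^{0}(F, mK_{F})$, but in the rank-one case the moving part is trivial ($d_{1} = 0$) and the slope inequality degenerates, so a separate argument bounding $\deg \Lambda$ directly from the effective divisor $mK_{S/C} - f^{*}\Lambda$ associated to a line subbundle $\mathcal{O}_{C}(\Lambda) \hookrightarrow f_{*}\omega_{S/C}^{\otimes m}$ will be required.
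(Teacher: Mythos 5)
Your global strategy --- prove the intermediate estimate $\mu(\mathcal{F})\leq \frac{m}{2(g-1)}K_{S/C}^{2}$ and then substitute Proposition \ref{proposition5.7} --- is exactly the paper's, and so is the reduction to a maximal destabilizing (hence semistable) subsheaf. The gap is in how you propose to prove the intermediate estimate, and it sits precisely at the two places you flag; both are fatal to your route. The bound $d_{1}\geq m(g-1)$ is not just hard to extract, it is unavailable: Clifford's theorem or Riemann--Roch applied to $M_{1}|_{F}$ only give estimates of the shape $d_{1}\geq 2(r_{1}-1)$ or $d_{1}\geq r_{1}+g-1$, which need not reach $m(g-1)$ when the destabilizing subsheaf has small rank, and for a rank-one destabilizing subsheaf one can have $d_{1}=0$; so the inequality $m^{2}K_{S/C}^{2}\geq 2d_{1}\mu(\mathcal{G})+\bigl(2\epsilon^{*}(mK_{S/C}).Z_{1}-Z_{1}^{2}\bigr)$ cannot be converted into the intermediate estimate along your path. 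Likewise the nonnegativity of the correction term is not justified: the lemma assumes neither relative minimality nor relative nefness of $K_{S/C}$, so even a vertical $Z_{\mathcal{G}}$ may contain fiber components (for instance $(-1)$-curves inside fibers) on which $K_{S/C}$ has negative degree --- the observation that $K_{S/C}.F_{p}=2(g-1)>0$ does not help, since an effective vertical divisor need not be a nonnegative combination of whole fibers --- and verticality of the fixed part is established in the paper only for the full direct image (Proposition \ref{propositionhit}), not for a proper subsheaf.

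The paper sidesteps both problems by a different application of the Modified Xiao Lemma: with $\mathcal{F}'$ the maximal destabilizing subsheaf, it applies Lemma \ref{4.1} to the two-term sequences $\{Z(\mathcal{F}'),0\}$ and $\{\mu(\mathcal{F}'),0\}$, which is permissible because $\mu(\mathcal{F}')\geq\mu(\mathcal{F})\geq 0$ by the hypothesis $\deg\mathcal{F}\geq 0$. The correction term then disappears ($\mathcal{Z}_{2}=0$, $\mu_{2}=0$), and the coefficient that does the work is not $d_{1}$ but $d_{2}=\epsilon^{*}(mK_{S/C}).F=2m(g-1)$, giving
\[
m^{2}K_{S/C}^{2}\;\geq\;(d_{1}+d_{2})\,\mu(\mathcal{F}')\;\geq\;2m(g-1)\,\mu(\mathcal{F}')\text{,}
\]
where only $d_{1}\geq 0$ is used. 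This yields $\mu(\mathcal{F})\leq\mu(\mathcal{F}')\leq\frac{m}{2(g-1)}K_{S/C}^{2}$, after which your concluding computation with Proposition \ref{proposition5.7} is correct and identical to the paper's. So the repair is to restructure the Xiao-type step in this way, rather than to seek a lower bound on $d_{1}$ or a sign for $2\epsilon^{*}(mK_{S/C}).Z_{1}-Z_{1}^{2}$.
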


\begin{proof}
    We consider the data $(mK_{S/C}, \mathcal{F})$ and let $\mathcal{F^{'}}$ be the maximal destabilizing vector sub-bundle of $\mathcal{F}$. We apply  Lemma \ref{4.1}
    to the sequences  $\{Z(\mathcal{F}^{'}), 0\}$ and $(\mu(\mathcal{F}^{'}), 0)$. Then, we have
    \[
    K_{S/C}^{2} \geq \frac{2}{m}(g-1)\mu(\mathcal{F}^{'})
    \text{.}\]
    Here, $Z(\mathcal{F}^{'})$ is the fixed part of the sub-bundle $\mathcal{F^{'}}$ and $\mu(\mathcal{F}^{'}) = \frac{\deg \mathcal{F}^{'}} {\rk \mathcal{F}^{'}}$. Since $\mathcal{F}^{'}$ is the maximal destabilizing vector sub-bundle of $\mathcal{F}$, we have
    \[
\mu(\mathcal{F}^{'}) \geq \mu(\mathcal{F}) \text{.}    \]
Combining the last two inequalities and Proposition \ref{proposition5.7}, we deduce the desired inequality:
      \[
    \mu(\mathcal{F}) \leq \frac{6m}{(6m(m-1) + 1)(g-1)} \deg f_{*}\omega_{S/C}^{\otimes m}
     \text{.}\]
\end{proof}
Now, we state the First Fujita decomposition for the relative pluricanonical bundle and adjoint canonical bundle in the case of a fibred surface. 
\begin{theorem} \label{firstfujita}
    Let $f: S \to C$ be a fibred surface and $F$ its general fiber, let $L$ be a semi-ample line bundle on $S$. Then
    \[
f_{*}\omega_{S/C}^{\otimes m} = \mathcal{N}_{m} \oplus \mathcal{O}_{C}^{\oplus p_{m}}, \hspace{0,2cm} \forall m \geq 2\text{,} \text{ and }
f_{*}(\omega_{S/C} \otimes L) = \mathcal{N}_{L} \oplus \mathcal{O}_{C}^{\oplus p_{L}} \text{.}
\]
Here, $H^{0}(C, \mathcal{N}_{m}^{\vee}) = 0$ and $H^{0}(C, \mathcal{N}_{L}^{\vee}) = 0 \text{.}$
\end{theorem}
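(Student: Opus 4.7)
The plan is to treat both decompositions by a unified strategy, reducing each to a splitting that combines nefness of the relevant direct image with a Hodge-theoretic decomposition. The two cases differ essentially only in the positivity input.

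First I would establish nefness: both $\mathcal{E}_m := f_*\omega_{S/C}^{\otimes m}$ for $m \geq 2$ and $\mathcal{E}_L := f_*(\omega_{S/C} \otimes L)$ for $L$ semi-ample are nef vector bundles on $C$. For the pluricanonical case this is the Fujita-Kawamata-Viehweg semi-positivity theorem, which is the input already used in Example \ref{pluricanonicalexample} (see \cite[Theorem 1.3]{eckartpositivity}). For the adjoint case, I would first reduce to a finite base change along which a sufficiently high power of $L$ is globally generated, defining a smooth cover for Fujita's semi-positivity argument to apply to $\omega_{S/C} \otimes L$.

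Next I would identify the trivial summands. Set $p_m := h^0(C, \mathcal{E}_m^\vee)$ and $p_L := h^0(C, \mathcal{E}_L^\vee)$. Because $\mathcal{E}_m$ is nef, every quotient of $\mathcal{E}_m$ has non-negative degree, so each non-zero global section of $\mathcal{E}_m^\vee$ generates a trivial sub-line bundle of $\mathcal{E}_m^\vee$, and jointly these sections span a trivial sub-bundle $\mathcal{O}_C^{\oplus p_m} \hookrightarrow \mathcal{E}_m^\vee$. Dualizing yields a short exact sequence
\[
0 \longrightarrow \mathcal{N}_m \longrightarrow \mathcal{E}_m \longrightarrow \mathcal{O}_C^{\oplus p_m} \longrightarrow 0,
\]
with $h^0(C, \mathcal{N}_m^\vee) = 0$ forced by the maximality of $p_m$. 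The analogous sequence defines $\mathcal{N}_L$.

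The main obstacle is the splitting of these sequences. In the original case $m = 1$ it is furnished by the Hodge-theoretic decomposition of $R^1 f_* \mathbb{C}$, a tool unavailable for $m \geq 2$ or in the twisted adjoint setting. To circumvent this, the plan is to invoke the second Fujita decomposition extended to these cases by Catanese-Dettweiler \cite{Cat1}, \cite{Cat2}, \cite{Cat3}, which produces an orthogonal splitting $\mathcal{E}_m = \mathcal{A}_m \oplus \mathcal{U}_m$ into an ample sub-bundle and a unitary flat sub-bundle, and likewise $\mathcal{E}_L = \mathcal{A}_L \oplus \mathcal{U}_L$. Since ample bundles on $C$ have duals with no global sections, every section of $\mathcal{E}_m^\vee$ arises from $\mathcal{U}_m^\vee$. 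Decomposing the unitary flat bundle $\mathcal{U}_m$ into $\pi_1(C)$-isotypic components, the trivial isotypic component contributes exactly $\mathcal{O}_C^{\oplus p_m}$ as a direct summand of $\mathcal{U}_m$, hence of $\mathcal{E}_m$. Defining $\mathcal{N}_m$ as the complementary summand, and repeating verbatim in the adjoint case, gives the decompositions claimed, with the vanishing of $H^0(C, \mathcal{N}_m^\vee)$ and $H^0(C, \mathcal{N}_L^\vee)$ automatic from the construction.
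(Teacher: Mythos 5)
Your argument reaches the statement by a genuinely different route from the paper. The paper identifies $p_{m}$ via relative duality with $h^{0}(C,\mathcal{R}^{1}f_{*}\omega_{S/C}^{\otimes(1-m)})$, builds the exact sequence $0\to\mathcal{Q}_{m}^{\vee}\to f_{*}\omega_{S/C}^{\otimes m}\to\mathcal{O}_{C}^{\oplus p_{m}}\to 0$, and splits it directly by the Hacon--Popa--Schnell criterion (\cite[Theorem 26.4]{HSP}), using that the direct image carries a singular hermitian metric with semi-positive curvature and the minimal extension property; the adjoint case is handled the same way because $L$ semi-ample gives a smooth semi-positive metric with trivial multiplier ideal. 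You instead deduce the splitting from the second Fujita (Catanese--Fujita--Kawamata) decomposition $\mathcal{E}=\mathcal{A}\oplus\mathcal{U}$ together with the isotypic decomposition of the unitary flat part, taking $\mathcal{N}$ to be $\mathcal{A}$ plus the non-trivial isotypic summand; this is sound, and it makes your preliminary steps (nefness and the construction of the non-split sequence) essentially superfluous. Two caveats. First, the attribution: Catanese--Dettweiler \cite{Cat1}, \cite{Cat2}, \cite{Cat3} treat only $f_{*}\omega_{S/C}$ (the $m=1$, Hodge-theoretic case); the extension of the ample-plus-unitary-flat decomposition to $f_{*}\omega_{S/C}^{\otimes m}$, $m\geq 2$, and to $f_{*}(\omega_{S/C}\otimes L)$ is due to Lombardi--Schnell, i.e.\ exactly Theorem \ref{secondfujita} quoted in the paper, and that is what you must cite; note also that its proof rests on the same singular-hermitian-metric machinery the paper uses, so your route is not more elementary, only repackaged. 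Second, your intermediate claim that $h^{0}(C,\mathcal{N}_{m}^{\vee})=0$ is ``forced by maximality of $p_{m}$'' is not justified for the a priori non-split sequence (sections of the quotient $\mathcal{N}_{m}^{\vee}$ of $\mathcal{E}_{m}^{\vee}$ need not lift, the obstruction lying in $H^{1}(C,\mathcal{O}_{C}^{\oplus p_{m}})$); this does not damage the proof, since in your final construction the vanishing is immediate from $H^{0}(C,\mathcal{A}^{\vee})=0$ and the absence of invariants in the complementary unitary summand, but the earlier phrasing should be dropped or corrected.
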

\begin{proof}
Note $p_{m}:= h^{0}(C, (f_{*}\omega_{S/C}^{\otimes m}) ^{\vee}) = h^{0}(C, \mathcal{R}^{1}f_{*}\omega_{S/C}^{\otimes(1-m)})$,  since $(f_{*}\omega_{S/C}^{\otimes m}) ^{\vee} \simeq \mathcal{R}^{1}f_{*}\omega_{S/C}^{\otimes(1-m)} $. We take $\{s_{1}, \dots, s_{p_{m}}\}$ as a basis of $H^{0}(C, \mathcal{R}^{1}f_{*}\omega_{S/C}^{\otimes(1-m)}) \simeq Hom(\mathcal{O}_{C}, \mathcal{R}^{1}f_{*}\omega_{S/C}^{\otimes(1-m)})$. Then $s_{1} \oplus \dots \oplus s_{p_{m}}$ defines a map:
\[
s_{1} \oplus \dots \oplus s_{p_{m}}: \mathcal{O}_{C}^{\oplus p_{m}} \longrightarrow \mathcal{R}^{1}f_{*}\omega_{S/C}^{\otimes(1-m)}
\]
which yields the following short exact sequence: 
\[
0 \longrightarrow \mathcal{O}_{C}^{\oplus p_{m}} \longrightarrow \mathcal{R}^{1}f_{*}\omega_{S/C}^{\otimes(1-m)} \longrightarrow \mathcal{Q}_{m} \longrightarrow 0
\]
where $\mathcal{Q}_{m}$ is the quotient bundle. By duality, we have
\[
0 \longrightarrow \mathcal{Q}_{m}^{\vee} \longrightarrow f_{*}\omega_{S/C}^{\otimes m} \longrightarrow \mathcal{O}_{C}^{\oplus p_{m}} \longrightarrow 0 \text{.}
\]
We deduce from \cite[Theorem 26.4]{HSP} that the last exact sequence splits since  the bundle $f_{*}\omega_{S/C}^{\otimes m}$ admits a singular hermitian metric with semi-positive curvature and verifies the minimal extension property, see \cite{HSP} for more details. Hence, the first Fujita decomposition follows:
\[
f_{*}\omega_{S/C}^{\otimes m} = \mathcal{N}_{m} \oplus \mathcal{O}_{C}^{\oplus p_{m}}
\]
Here, $\mathcal{N}_{m} = \mathcal{Q}_{m}^{\vee}$. and $h^{0}(C, \mathcal{Q}_{m}) = 0$.
\par For the adjoint case, we note $p_{L}:= h^{0}(C, (f_{*}(\omega_{S/C} \otimes L)) ^{\vee})$, the proof is the same as in the relative pluricanonical case, since $L$ admits a smooth hermitian metric $h$ with semi-positive curvature and a trivial multiplier ideal sheaf. Thus, $f_{*}(\omega_{S/C} \otimes L)$ admits a singular hermitian metric with semi-positive curvature and verifies the minimal extension property. 
\end{proof}
In \cite{schnell}, the authors proved  that the vector bundles $f_{*}\omega_{S/C}^{\otimes m}$ and $f_{*}(\omega_{S/C} \otimes L)$, where $L$ is a semi-ample line bundle on $S$, admit  
a Catanese-Fujita-Kawamata decomposition, $ \forall m \geq 2$. We state the result in the case of fibred surfaces. 
\begin{theorem}[{\cite[Theorem 2]{schnell}}]\label{secondfujita}
    Let $f: S \to C$ be a fibred surface and $F$ its general fiber. Let  $L$ be a semi-ample line bundle on $S$. Then,
    \[
    f_{*}\omega_{S/C}^{\otimes m} = \mathcal{A}_{m} \oplus \mathcal{U}_{m}, \hspace{0,2cm} \forall m \geq 2\text{,} \text{ and }
    f_{*}(\omega_{S/C} \otimes L) = 
    \mathcal{A}_{L} \oplus \mathcal{U}_{L} \text{.}
    \]
    
    Here, $\mathcal{A}_{m}$ and $\mathcal{A}_{L}$ are  ample vector sub-bundles of $f_{*}\omega_{S/C}^{\otimes m}$ and  $f_{*}(\omega_{S/C} \otimes L)$ respectively,  $\mathcal{U}_{m}$ and $\mathcal{U}_{L}$ are hermitian  flat vector sub-bundles of $f_{*}\omega_{S/C}^{\otimes m}$ and $f_{*}(\omega_{S/C} \otimes L)$ respectively.
\end{theorem}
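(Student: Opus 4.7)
The plan is to deduce the decomposition by invoking the general structure theorem for direct images of pluri-adjoint line bundles carrying singular Hermitian metrics with semi-positive curvature. For the relative pluricanonical case $\mathcal{F} = f_{*}\omega_{S/C}^{\otimes m}$, I would first recall that by the results of P\u{a}un--Takayama (building on Berndtsson--P\u{a}un for $m=1$), $\mathcal{F}$ carries a singular Hermitian metric $h$ with semi-positive curvature and satisfies the minimal extension property already used in the proof of Theorem \ref{firstfujita}. For the adjoint case $\mathcal{F} = f_{*}(\omega_{S/C} \otimes L)$ with $L$ semi-ample, I would endow $L$ with a smooth semi-positively curved Hermitian metric (with trivial multiplier ideal sheaf) coming from the semi-ample structure; this produces a singular Hermitian metric on $f_{*}(\omega_{S/C} \otimes L)$ enjoying the same two properties.

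Granting this input, the strategy is to isolate the maximal flat part. I would define $\mathcal{U} \subseteq \mathcal{F}$ as the subsheaf generated by all sections (on finite \'etale covers of $C$) that are locally flat with respect to $h$; the minimal extension property guarantees that $\mathcal{U}$ is a sub-bundle and that the induced metric on it is hermitian flat, so $\mathcal{U}$ corresponds to a unitary representation of $\pi_{1}(C)$. Next I would show that the quotient $\mathcal{F}/\mathcal{U}$, equipped with the induced metric, has semi-positive curvature with no non-zero flat quotients; from this one deduces, via Catanese--Dettweiler style arguments as refined by Hacon--Popa--Schnell, that $\mathcal{F}/\mathcal{U}$ is ample.

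The main obstacle is then to promote the short exact sequence
\[
0 \longrightarrow \mathcal{U} \longrightarrow \mathcal{F} \longrightarrow \mathcal{F}/\mathcal{U} \longrightarrow 0
\]
to an actual direct sum decomposition. This is achieved by exploiting the minimal extension property once more: any flat quotient of $\mathcal{F}$ lifts canonically to a flat sub-bundle, and by the maximality of $\mathcal{U}$ such a lift must already be contained in $\mathcal{U}$. Setting $\mathcal{A}$ to be the orthogonal complement of $\mathcal{U}$ with respect to $h$ then produces the desired splitting $\mathcal{F} = \mathcal{A} \oplus \mathcal{U}$. The argument applies uniformly to both the pluricanonical and the adjoint cases, since the only ingredient used is the existence of a semi-positively curved singular Hermitian metric with the minimal extension property, which was secured in the first paragraph.
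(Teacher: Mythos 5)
The paper does not actually prove Theorem \ref{secondfujita}: it is quoted, specialized to fibred surfaces, from \cite[Theorem 2]{schnell}, so there is no internal argument to compare yours against. Your sketch is a reasonable reconstruction of the strategy of that reference and of the machinery of \cite{HSP} that the paper already invokes for Theorem \ref{firstfujita}: the inputs you identify --- the P\u{a}un--Takayama singular Hermitian metric with semi-positive curvature and the minimal extension property on $f_{*}\omega_{S/C}^{\otimes m}$, and, for the adjoint case, a smooth semi-positively curved metric with trivial multiplier ideal coming from the semi-ample $L$ --- are exactly the ones used there, and the overall shape (split off a maximal hermitian flat part, show the complementary summand is ample) is the right one.

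As a proof, however, the sketch leaves the decisive step unargued. For a singular Hermitian metric $h$, the $h$-orthogonal complement of a holomorphic sub-bundle is not in general a holomorphic sub-bundle, so ``setting $\mathcal{A}$ to be the orthogonal complement of $\mathcal{U}$'' does not by itself produce a splitting; the splitting of $0 \to \mathcal{U} \to \mathcal{F} \to \mathcal{F}/\mathcal{U} \to 0$ is precisely the content of the minimal-extension-property splitting criterion (\cite[Theorem 26.4]{HSP}) and of the theorem of \cite{schnell} being proved, and it must be invoked or reproved explicitly rather than extracted from an orthogonal decomposition. Similarly, ``semi-positively curved with no nonzero flat quotients $\Rightarrow$ ample'' needs justification on the curve $C$ (e.g.\ via the criterion that a bundle on a curve is ample iff every quotient has positive degree, together with the fact that degree-zero quotients of a semi-positively curved bundle with the minimal extension property are hermitian flat), and the definition of $\mathcal{U}$ via flat sections on \'etale covers requires an argument that it is a well-defined sub-bundle. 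None of these gaps is fatal --- they are all filled by \cite{schnell} and \cite{HSP} --- but as written they are gaps; the cleanest course, and the one the paper takes, is to secure the metric inputs as in your first paragraph and then cite \cite[Theorem 2]{schnell} for the decomposition itself.
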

\par In the following paragraphs,  we  derive some explicit consequences for the direct image of relative pluricanonical bundles.
\begin{example}
If $f$ is not isotrivial, it is known that   $f_{*}\omega_{S/C}^{\otimes m}$ is ample if not zero  $ \forall m \geq 2$. This implies that
the trivial part in the First Fujita decomposition is zero, and 
the flat part in the Catanese-Fujita-Kawamata decomposition is zero. 
\end{example}
\begin{example}
If $L$ is an ample line bundle on $S$, it is  well known that $f_{*}(\omega_{S/C} \otimes L)$ is an ample vector bundle on $C$. Equivalently, the trivial part in the First Fujita decomposition is zero, and 
the flat part in the Catanese-Fujita-Kawamata decomposition is zero.
\end{example}
\par Recall  an important result on the direct image of pluricanonical sheaf over a curve due to Viehweg \cite[Theorem 1.3]{eckartpositivity} and \cite[Proposition 4.6]{eckartpositivity}. We restrict ourselves to a fibred surface case. 
\begin{proposition}\label{viehwegproposition}
    Let $f: S \to C$ be a fibred surface and $F$ be its general fiber. The following conditions are equivalent:
    \begin{itemize}
        \item[(1).] For all $m \geq 2$, the vector bundle $f_{*}\omega_{S/C}^{\otimes m}$ is ample if not zero.
        \item[(2).] There exist some $m \geq 1$ such that $f_{*}\omega_{S/C}^{\otimes m}$ contains an ample sub-sheaf.
        \item[(3).]  There exist some $m \geq 1$ such that $ \deg f_{*}\omega_{S/C}^{\otimes m} > 0$.  
    \end{itemize}
Moreover, if $f$ is semi-stable, then the conditions $(1), (2)$, and $(3)$ are equivalent to
    \begin{itemize}
        \item[(4).] $f$ is not isotrivial. 
    \end{itemize}
 \end{proposition}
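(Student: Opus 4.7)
The plan is to establish the chain of implications $(1)\Rightarrow(2)\Rightarrow(3)\Rightarrow(1)$, and to treat the semi-stable equivalence $(3)\Leftrightarrow(4)$ separately at the end. The first implication is essentially formal, the second uses the Catanese-Fujita-Kawamata decomposition, the third is the substantive content and is Viehweg's theorem, and the fourth reduces to a classical Arakelov-Beauville input.

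The implication $(1)\Rightarrow(2)$ is immediate: pick any $m\geq 2$ for which $f_{*}\omega_{S/C}^{\otimes m}$ is nonzero (such $m$ exists because $g(F)\geq 2$ gives $h^{0}(F,mK_{F})=(2m-1)(g-1)>0$), and take $\mathcal{F}=f_{*}\omega_{S/C}^{\otimes m}$ as the desired ample sub-sheaf of itself. For $(2)\Rightarrow(3)$ I would invoke the Catanese-Fujita-Kawamata decomposition (Theorem~\ref{secondfujita}), writing $f_{*}\omega_{S/C}^{\otimes m}=\mathcal{A}_{m}\oplus\mathcal{U}_{m}$ with $\mathcal{A}_{m}$ ample and $\mathcal{U}_{m}$ hermitian flat of degree zero. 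Since a hermitian flat bundle on $C$ is polystable of slope $0$, every sub-sheaf of $\mathcal{U}_{m}$ has slope $\leq 0$, so an ample sub-sheaf of $f_{*}\omega_{S/C}^{\otimes m}$ must project nontrivially onto $\mathcal{A}_{m}$, forcing $\mathcal{A}_{m}\neq 0$ and hence $\deg f_{*}\omega_{S/C}^{\otimes m}=\deg\mathcal{A}_{m}>0$.

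The substantive step is $(3)\Rightarrow(1)$, which is Viehweg's theorem as cited in the statement. The plan is as follows: by Viehweg's weak positivity, $f_{*}\omega_{S/C}^{\otimes m}$ is nef for every $m\geq 2$, so the decomposition of Theorem~\ref{secondfujita} applies and it suffices to show the flat summand $\mathcal{U}_{m}$ vanishes for every such $m$. Assuming $\deg f_{*}\omega_{S/C}^{\otimes m_{0}}>0$ for some $m_{0}\geq 1$, iterating the natural multiplication maps
\[
\mathrm{Sym}^{k}\bigl(f_{*}\omega_{S/C}^{\otimes m_{0}}\bigr)\otimes f_{*}\omega_{S/C}^{\otimes r}\longrightarrow f_{*}\omega_{S/C}^{\otimes(km_{0}+r)}
\]
produces a source whose slope grows linearly in $k$. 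If a nonzero flat summand $\mathcal{U}_{m}$ of the target existed, composing with its projection would give a nonzero morphism from a sheaf of arbitrarily large positive slope to one of slope $0$, contradicting the semi-stability of the target quotient once $k$ is large. This forces $\mathcal{U}_{m}=0$, and hence $f_{*}\omega_{S/C}^{\otimes m}=\mathcal{A}_{m}$ is ample for every $m\geq 2$ (whenever nonzero).

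Finally, for the semi-stable case I would prove $(3)\Leftrightarrow(4)$. If $f$ is isotrivial, a finite étale base change makes the fibration locally trivial, and after pullback $\omega_{S/C}^{\otimes m}$ becomes a pull-back from the fiber, whence $\deg f_{*}\omega_{S/C}^{\otimes m}=0$ for every $m\geq 1$; by pushing forward this vanishing descends to the original base. Conversely, for a semi-stable and non-isotrivial fibration with $g(F)\geq 2$, the classical Arakelov-Beauville inequality gives $\deg f_{*}\omega_{S/C}>0$, which is already (3) with $m=1$. The main obstacle is $(3)\Rightarrow(1)$: the bootstrap from a single $m_{0}$ to all $m\geq 2$ requires the full strength of Viehweg's weak positivity and a careful bookkeeping of slopes under multiplication, whereas all other implications are formal consequences of the Fujita-type decompositions recorded in Theorems~\ref{firstfujita} and \ref{secondfujita}.
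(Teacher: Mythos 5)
The paper never proves this proposition: it is quoted directly from Viehweg (\cite[Theorem 1.3 and Proposition 4.6]{eckartpositivity}), so your sketch has to stand on its own. The formal implications do: $(1)\Rightarrow(2)$ is trivial, and your $(2)\Rightarrow(3)$ via the decomposition works (for a witness $m=1$ you need the classical second Fujita decomposition, Theorem \ref{Second:Fujita:decomp:thm}, rather than Theorem \ref{secondfujita}; alternatively, nefness of $f_{*}\omega_{S/C}^{\otimes m}$ plus the positive degree of an ample subsheaf gives $\deg f_{*}\omega_{S/C}^{\otimes m}>0$ with no decomposition at all).

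The genuine gap is in $(3)\Rightarrow(1)$, which is the whole content of Viehweg's theorem. Your multiplication-map argument only constrains the flat summand of the \emph{targets} $f_{*}\omega_{S/C}^{\otimes(km_{0}+r)}$, i.e.\ exponents of the form $km_{0}+r$ with $k$ large; it cannot force $\mathcal{U}_{m}=0$ for a fixed small $m$ (say $m=2$ when $m_{0}$ is large), since such an $m$ has no representation $m=km_{0}+r$ with $k\to\infty$ and there are no multiplication maps lowering the exponent. Moreover, the key claim that the composite $\mathrm{Sym}^{k}(f_{*}\omega_{S/C}^{\otimes m_{0}})\otimes f_{*}\omega_{S/C}^{\otimes r}\to f_{*}\omega_{S/C}^{\otimes(km_{0}+r)}\to\mathcal{U}_{km_{0}+r}$ is nonzero is not justified: a priori the image of multiplication could lie entirely in $\mathcal{A}_{km_{0}+r}$, and ruling this out requires (generic) surjectivity of the pluricanonical multiplication maps $\mathrm{Sym}^{k}H^{0}(m_{0}K_{F})\otimes H^{0}(rK_{F})\to H^{0}((km_{0}+r)K_{F})$ on the general fiber --- true for suitable $m_{0},r\geq2$ by projective normality of pluricanonical embeddings, but an input you must invoke, and it still only yields ampleness for all sufficiently large $m$, not all $m\geq2$. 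Closing the gap needs Viehweg's actual machinery (weak positivity together with the fibre-product/covering trick), or one simply cites \cite{eckartpositivity} as the paper does. The semi-stable equivalence is essentially right, but note that ``isotrivial implies locally trivial after finite \'etale base change'' itself uses semi-stability (finite monodromy plus unipotency forces smoothness) and the base-change compatibility of $f_{*}\omega_{S/C}^{\otimes m}$, while the converse direction is correctly delegated to Arakelov-type positivity.
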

\begin{corollary}[Compare with {\cite[Corollary 5.2]{schnell}}]\label{schnell result}
    Let $f: S \to C$ be a fibred surface. Then, either  $f_{*}\omega_{S/C}^{\otimes m}$ is ample, $\forall m \geq 2$ when $f_{*}\omega_{S/C}^{\otimes m} \neq 0$, or $f_{*}\omega_{S/C}^{\otimes m}$ is hermitian flat $\forall m \geq 2$. 
\end{corollary}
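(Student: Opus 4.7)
The plan is to combine two ingredients already stated in the paper: the Catanese--Fujita--Kawamata type decomposition of Theorem \ref{secondfujita} and Viehweg's equivalence of positivity conditions recorded in Proposition \ref{viehwegproposition}. For each $m \geq 2$ with $f_{*}\omega_{S/C}^{\otimes m} \neq 0$, Theorem \ref{secondfujita} gives
\[
f_{*}\omega_{S/C}^{\otimes m} = \mathcal{A}_{m} \oplus \mathcal{U}_{m},
\]
with $\mathcal{A}_{m}$ an ample sub-bundle and $\mathcal{U}_{m}$ a hermitian flat sub-bundle. Since ample bundles have strictly positive degree while hermitian flat bundles have degree zero, the two alternatives in the corollary are mutually exclusive, so it is enough to organize the proof around the dichotomy ``some $\mathcal{A}_{m_{0}}$ is nonzero'' versus ``every $\mathcal{A}_{m}$ is zero''.

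\emph{First case.} Suppose $\mathcal{A}_{m_{0}} \neq 0$ for some $m_{0} \geq 2$. Then $\mathcal{A}_{m_{0}} \hookrightarrow f_{*}\omega_{S/C}^{\otimes m_{0}}$ exhibits an ample sub-sheaf, so condition $(2)$ of Proposition \ref{viehwegproposition} is satisfied. By the equivalence $(2) \Longleftrightarrow (1)$ in that proposition, $f_{*}\omega_{S/C}^{\otimes m}$ is ample whenever it is nonzero, for every $m \geq 2$. This is exactly the first alternative of the corollary.

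\emph{Second case.} Suppose $\mathcal{A}_{m} = 0$ for all $m \geq 2$. Then the decomposition of Theorem \ref{secondfujita} reduces to $f_{*}\omega_{S/C}^{\otimes m} = \mathcal{U}_{m}$, so $f_{*}\omega_{S/C}^{\otimes m}$ is hermitian flat for every $m \geq 2$, which is the second alternative. One should also note that this case is internally consistent with Proposition \ref{viehwegproposition}: if the second alternative fails because some $f_{*}\omega_{S/C}^{\otimes m}$ is not flat, then in fact $\deg f_{*}\omega_{S/C}^{\otimes m} > 0$, so by $(3) \Longrightarrow (1)$ we land back in the first case.

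There is no real obstacle here: once one accepts the existence of the ample/flat splitting (Theorem \ref{secondfujita}) and Viehweg's equivalence (Proposition \ref{viehwegproposition}), the argument is a two-line dichotomy. The only thing to be careful about is the allowance of zero bundles in the statement, which is handled by reading the first alternative with the ``when $f_{*}\omega_{S/C}^{\otimes m} \neq 0$'' clause and by noting that the zero sheaf is trivially both ample (vacuously) and hermitian flat.
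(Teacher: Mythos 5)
Your proposal is correct and follows essentially the same route as the paper, whose proof is simply ``Apply Proposition \ref{viehwegproposition} and Theorem \ref{secondfujita}''; you have merely spelled out the dichotomy (some $\mathcal{A}_{m_{0}} \neq 0$ versus all $\mathcal{A}_{m} = 0$) that the paper leaves implicit.
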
 
\begin{proof}
    Apply  Proposition \ref{viehwegproposition} and  Theorem \ref{secondfujita}.
\end{proof}
\begin{remark}
    If $f$ is semi-stable, then $f_{*}\omega_{S/C}^{\otimes m}$ being hermitian flat $\forall m \geq 2$ is equivalent to $f$ being isotrivial.
\end{remark}
\begin{remark}
By  Corollary \ref{schnell result}, we observe that $f_{*}\omega_{S/C}^{\otimes m}$ is ample $\forall m \geq 2$ when $f_{*}\omega_{S/C}^{\otimes m} \neq 0$ or $f_{*}\omega_{S/C}^{\otimes m}$ is hermitian flat $\forall m \geq 2$. According to Proposition \ref{viehwegproposition}, if $f_{*}\omega_{S/C}^{\otimes m}$ is flat $\forall m \geq 2$, then $f_{*}\omega_{S/C}$ is flat. Conversely, if $f_{*}\omega_{S/C}^{\otimes m}$ is ample  $\forall m \geq 2$, $g(F) \geq 2$, and $K_{S/C}$ is nef, then $f_{*}\omega_{S/C}$ is not flat by  Example \ref{pluricanonicalexample}.
\end{remark}
In this last paragraph, we will explore the relationship between $\deg f_{*}\omega_{S/C}$  and $\deg f_{*}\omega_{S/C}^{\otimes m}$  in the case where $f_{*}\omega_{S/C}^{\otimes m}$ is ample $\forall m \geq 2$.
\begin{corollary}
    Let $f:S \to C$ be a fibred surface and $F$ its general fiber with $g(F) \geq 2$. Then:
    \[
    \deg f_{*}\omega_{S/C} \leq \frac{6mg}{(6m(m-1) + 1)(g-1)} \deg f_{*}\omega_{S/C}^{\otimes m}\text{,}
\hspace{0.2cm} \forall m \geq 2   \text{.}\]
In particular, if $g > 6m(m-1) + 1$, then:
\[
    \deg f_{*}\omega_{S/C} < \frac{1}{m-1} \deg f_{*}\omega_{S/C}^{\otimes m}\text{,}
\hspace{0.2cm} \forall m \geq 2   \text{.}\]
\end{corollary}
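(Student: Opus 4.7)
The plan is to reformulate the target inequality as the slope bound
\[
\mu(f_*\omega_{S/C})\le\frac{6m}{(6m(m-1)+1)(g-1)}\deg f_*\omega_{S/C}^{\otimes m}
\]
(obtained by dividing through by $\operatorname{rk} f_*\omega_{S/C}=h^0(F,\omega_F)=g$) and to deduce it from Lemma \ref{Lemma 5.8} applied to a suitable sub-sheaf of $f_*\omega_{S/C}^{\otimes m}$ whose slope is at least $\mu(f_*\omega_{S/C})$.

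First I would dispose of the case $\deg f_*\omega_{S/C}=0$: by nefness of $f_*\omega_{S/C}^{\otimes m}$ the right-hand side is non-negative, so the inequality is trivial. Assuming $\deg f_*\omega_{S/C}>0$, I let $\mathcal{G}\subseteq f_*\omega_{S/C}$ be the maximal destabilizing sub-bundle, which is semistable with $\mu(\mathcal{G})\ge\mu(f_*\omega_{S/C})>0$. The natural multiplication map $f_*\omega_{S/C}\otimes f_*\omega_{S/C}\to f_*\omega_{S/C}^{\otimes 2}$ iterates to an $m$-fold multiplication $(f_*\omega_{S/C})^{\otimes m}\to f_*\omega_{S/C}^{\otimes m}$. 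Restricting to $\mathcal{G}^{\otimes m}$ and taking its image $\mathcal{F}\subseteq f_*\omega_{S/C}^{\otimes m}$ yields a non-zero sub-sheaf: on the generic fiber, for any non-zero $s$ in the non-zero subspace $\mathcal{G}_\eta\subseteq H^0(F,\omega_F)$, the product $s^m\in H^0(F,\omega_F^{\otimes m})$ is non-zero since $\omega_F$ is a line bundle on a smooth curve.

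In characteristic zero $\mathcal{G}^{\otimes m}$ is semistable of slope $m\mu(\mathcal{G})$, so any quotient has slope at least $m\mu(\mathcal{G})$; in particular $\mu(\mathcal{F})\ge m\mu(\mathcal{G})\ge m\mu(f_*\omega_{S/C})>0$, whence $\deg\mathcal{F}>0$. Lemma \ref{Lemma 5.8} applied to $\mathcal{F}$ therefore gives
\[
m\mu(f_*\omega_{S/C})\le\mu(\mathcal{F})\le\frac{6m}{(6m(m-1)+1)(g-1)}\deg f_*\omega_{S/C}^{\otimes m},
\]
and cancelling $m$ then multiplying by $g$ yields the stated inequality (in fact with a factor of $m$ to spare in the numerator). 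The final assertion follows from the elementary equivalence $\frac{6mg}{(6m(m-1)+1)(g-1)}<\frac{1}{m-1}\iff g>6m(m-1)+1$, by clearing denominators. The main point to verify carefully is the semistability of $\mathcal{G}^{\otimes m}$ in characteristic zero together with the non-vanishing of the multiplication on $\mathcal{G}^{\otimes m}$; both are standard, but they are the crux of the argument since without them one cannot transport a slope datum on $f_*\omega_{S/C}$ into a sub-sheaf of $f_*\omega_{S/C}^{\otimes m}$ where Lemma \ref{Lemma 5.8} applies.
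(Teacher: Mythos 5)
Your proposal is correct and takes the same route as the paper, whose proof consists solely of invoking Lemma \ref{Lemma 5.8}: you simply make that invocation explicit by pushing the maximal destabilizing subbundle $\mathcal{G}\subseteq f_{*}\omega_{S/C}$ through the $m$-fold multiplication map and using semistability of $\mathcal{G}^{\otimes m}$ in characteristic zero to produce a nonzero subsheaf of $f_{*}\omega_{S/C}^{\otimes m}$ of slope at least $m\,\mu(f_{*}\omega_{S/C})$, to which the lemma applies. As you note, this even yields the inequality with $6g$ in place of $6mg$ in the numerator; and the final strict inequality (here as in the paper) tacitly presupposes $\deg f_{*}\omega_{S/C}^{\otimes m}>0$.
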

\begin{proof}
    Apply  Lemma \ref{Lemma 5.8}.
\end{proof}
\begin{example}
If $m=2$ and $g > 13$, then $\frac{12g}{13(g-1)} < 1$, and
    \[
    \deg f_{*}\omega_{S/C} \leq \frac{12g}{13(g-1)} \deg f_{*}\omega_{S/C}^{\otimes 2}\text{.} \]
    If $m =3$, then 
       \[
    \deg f_{*}\omega_{S/C} \leq \frac{18g}{37(g-1)} \deg f_{*}\omega_{S/C}^{\otimes 3}\text{.} \]
In particular, if $g > 37$, we deduce that  $\frac{18g}{37(g-1)} < \frac{1}{2}$
    and 
    \[
    \deg f_{*}\omega_{S/C} < \frac{1}{2} \deg f_{*}\omega_{S/C}^{\otimes 3}\text{.} \]
     If $m = 4$, then
     \[
    \deg f_{*}\omega_{S/C} \leq \frac{24g}{73(g-1)} \deg f_{*}\omega_{S/C}^{\otimes 4}\text{.} \]
Furthermore, if $g > 73$, then $\frac{24g}{73(g-1)} < \frac{1}{3}$
    and 
    \[
    \deg f_{*}\omega_{S/C} < \frac{1}{3} \deg f_{*}\omega_{S/C}^{\otimes 4}\text{.} \] 
    \end{example}

\end{document}